\numberwithin{equation}{section}
\newtheorem{thm}{Theorem}[section]
\newtheorem{lem}[thm]{Lemma}
\newtheorem{cor}[thm]{Corollary}
\newtheorem{pro}[thm]{Proposition}
\theoremstyle{definition}
\newtheorem{ex}[thm]{Example}
\newtheorem{rmk}[thm]{Remark}
\newtheorem{defi}[thm]{Definition}
\newcommand{\be }{\begin{gather}}
\newcommand{\ee }{\end{gather}}
\newcommand{\g}{\mathfrak g}
\newcommand{\Real}{\mathbb R}
\newcommand{\huaA}{\mathcal{A}}
\newcommand{\huaB}{\mathcal{B}}
\newcommand{\huaC}{\mathcal{C}}
\newcommand{\huaF}{\mathcal{F}}
\newcommand{\huaL}{\mathcal{L}}
\newcommand{\huaO}{\mathcal{O}}
\newcommand{\huaT}{\mathcal{T}}
\newcommand{\huaH}{\mathcal{H}}
\newcommand{\huaX}{\mathcal{X}}
\newcommand{\frkL}{\mathfrak L}
\newcommand{\frkB}{\mathfrak B}
\newcommand{\frkX}{\mathfrak X}
\newcommand{\frkD}{\mathfrak D}
\newcommand{\frka}{\mathfrak a}
\newcommand{\frkg}{\mathfrak g}
\newcommand{\Courant}[1]{\left[\!\left[ #1\right]\!\right]}
\newcommand{\Courantt}[1]{\left[\hspace*{-3pt}\left[ #1\right]\hspace*{-3pt}\right]}
\newcommand{\half}{\frac{1}{2}}
\newcommand{\lon }{\,\rightarrow\,}
\newcommand{\CWM}{C^{\infty}(M)}
\newcommand{\Id}{\rm{Id}}
\newcommand{\dM}{\mathrm{d}}
\newcommand{\Hom}{\mathrm{Hom}}
\newcommand{\Sym}{\mathrm{Sym}}
\newcommand{\Der}{\mathrm{Der}}
\newcommand{\Ker}{\mathrm{ker}}
\newcommand{\ad}{\mathrm{ad}}
\newcommand{\Hi}{\mathrm{H}}
\newcommand{\MN}{\mathrm{MN}}
\newcommand{\Def}{\mathrm{def}}
\newcommand{\LP}{$\mathsf{LieRep}$~}
\begin{document}
\allowdisplaybreaks

\newcommand{\arXivNumber}{2108.08906}

\renewcommand{\PaperNumber}{054}

\FirstPageHeading

\ShortArticleName{Deformations and Cohomologies of Relative Rota--Baxter Operators on Lie Algebroids}

\ArticleName{Deformations and Cohomologies\\ of Relative Rota--Baxter Operators\\ on Lie Algebroids and Koszul--Vinberg Structures}

\Author{Meijun LIU~$^{\rm a}$, Jiefeng LIU~$^{\rm a}$ and Yunhe SHENG~$^{\rm b}$}

\AuthorNameForHeading{M.~Liu, J.~Liu and Y.~Sheng}

\Address{$^{\rm a)}$~School of Mathematics and Statistics, Northeast Normal University,\\
\hphantom{$^{\rm a)}$}~Changchun 130024, Jilin, China}
\EmailD{\href{mailto:liumj281@nenu.edu.cn}{liumj281@nenu.edu.cn}, \href{mailto:liujf534@nenu.edu.cn}{liujf534@nenu.edu.cn}}

\Address{$^{\rm b)}$~Department of Mathematics, Jilin University, Changchun 130012, Jilin, China}
\EmailD{\href{mailto:shengyh@jlu.edu.cn}{shengyh@jlu.edu.cn}}

\ArticleDates{Received February 02, 2022, in final form July 07, 2022; Published online July 13, 2022}

\Abstract{Given a Lie algebroid with a representation, we construct a graded Lie algebra whose Maurer--Cartan elements characterize relative Rota--Baxter operators on Lie algebroids. We~give the cohomology of relative Rota--Baxter operators and study infinitesimal deformations and extendability of order $n$ deformations to order $n+1$ deformations of relative Rota--Baxter operators in terms of this cohomology theory. We~also construct a graded Lie algebra on the space of multi-derivations of a vector bundle whose Maurer--Cartan elements characterize left-symmetric algebroids. We~show that there is a homomorphism from the controlling graded Lie algebra of relative Rota--Baxter operators on Lie algebroids to the controlling graded Lie algebra of left-symmetric algebroids. Consequently, there is a natural homomorphism from the cohomology groups of a relative Rota--Baxter operator to the deformation cohomology groups of the associated left-symmetric algebroid. As applications, we give the controlling graded Lie algebra and the cohomology theory of Koszul--Vinberg structures on left-symmetric algebroids.}

\Keywords{cohomology; deformation; Lie algebroid; Rota--Baxter operator; Koszul--Vinberg structure; left-symmetric algebroid}

\Classification{53D17; 53C25; 58A12; 17B70}

\section{Introduction}

In this paper we use Maurer--Cartan elements to study deformations and cohomologies of relative Rota--Baxter operators on Lie algebroids. Applications are given to study deformations and cohomologies of Koszul--Vinberg structures on left-symmetric algebroids.

\subsection[Relative Rota--Baxter operators on Lie algebroids and Koszul--Vinberg structures]{Relative Rota--Baxter operators on Lie algebroids\\ and Koszul--Vinberg structures}

 The concept of Rota--Baxter operators on associative algebras was introduced by G.~Baxter~\cite{Ba} and G.-C.~Rota \cite{RotaI,RotaII} in the 1960s. It~also plays an important role in the Connes--Kreimer's algebraic approach~\cite{CK} to the renormalization in perturbative quantum field theory.
 In~\cite{Kuper1},
Kupershmidt introduced the notion of a relative Rota--Baxter operator (also called an $\huaO$-operator) on a Lie algebra in order to better understand the relationship between the classical Yang--Baxter equation and the related integrable systems. In addition, the defining relationship of a relative Rota--Baxter operator was also called the Schouten curvature in~\cite{Kos}. See \cite{Car,Fard,Gu0-1,Guo,Rota--Baxter_Lie_algebroid,PBG,Uchino2} for more details on relative Rota--Baxter operators and their applications.

 The notion of a Lie algebroid was introduced by Pradines in 1967, which
is a generalization of Lie algebras and tangent bundles.
See \cite{General-theory-of-Lie-groupoid-and-Lie-algebroid} for the general
theory about Lie algebroids. Relative Rota--Baxter operators (also called $\huaO$-operators) on Lie algebroids were introduced in \cite{LiuShengBaiChen} as a~method to construct left-symmetric algebroids. The notion of a left-symmetric algebroid is a~geometric generalization of a~left-symmetric algebra (also called pre-Lie algebras, see
the survey article~\cite{Pre-lie-algebra-in-geometry} for more details).
See \cite{BBo,LiuShengBaiChen,Boyom1,Boyom2} for more details and applications of left-symmetric algebroids.

 In \cite{lsb2}, motivated by the theory of Lie bialgebroids \cite{Lie-bialgebroid}, the notion of a left-symmetric bialgebroid was introduced as a geometric generalization of a left-symmetric bialgebra \cite{Left-symmetric-bialgebras}. The double of a left-symmetric bialgebroid is not a left-symmetric algebroid anymore, but a pre-symplectic algebroid~\cite{lsb}. This result is parallel to the fact that
 the double of a Lie bialgebroid is a Courant algebroid~\cite{lwx}. As a Poisson structure $\pi$ on a manifold gives rise to a Lie bialgebroid, a Koszul--Vinberg structure $H$ on a flat manifold gives rise to a left-symmetric bialgebroid. In particular, if the Koszul--Vinberg structure $H$ is nondegenerate, the inverse of $H$ is a pseudo-Hessian structure~\cite{Shima,Geometry-of-Hessian-structures} on a flat manifold.
 Therefore, Koszul--Vinberg structures and pseudo-Hessian structures are respectively symmetric analogues of Poisson structures and symplectic structures. See \cite{ABB,BBo, WLS} for recent studies on Koszul--Vinberg structures.

 \subsection{Deformations and cohomologies}
The theory of deformation plays a prominent role in mathematics and physics.
The idea of treating deformation as a tool to study the algebraic structures was introduced by Gerstenhaber in his work on associative algebras \cite{Gerstenhaber2,Gerstenhaber1} and then was extended to Lie algebras by Nijenhuis and Richardson \cite{NR1,NR3}. One remarkable
result in Poisson geometry is that M.~Kontsevich \cite{Kon} proved that every Poisson manifold has a deformation quantization. There is a well known slogan, often attributed to Deligne, Drinfeld and Kontsevich: every reasonable deformation theory is controlled by a differential graded Lie algebra, determined up to quasi-isomorphisms.

A suitable deformation theory of an algebraic structure can be summarized as the following general principle: on the one hand, for a given object with an algebraic structure, there should exist a differential graded Lie algebra whose Maurer--Cartan elements characterize deformations of this object. On the other hand, there should exist a suitable cohomology so that the infinitesimal of a formal deformation can be identified with a cohomology class, and then a theory of the obstruction to the integration of an infinitesimal deformation can be developed using this cohomology theory. It~is well-known that deformations of Poisson structures are controlled by the differential graded Lie algebra constructed by the Schouten--Nijenhuis bracket of multi-vector fields. Infinitesimal deformations and extendibility of order $n$ deformations of a Poisson structure are characterized in terms of the Poisson cohomology \cite{Hue, Lic}. There also exists a differential graded Lie algebra and a deformation cohomology given by M.~Crainic and I.~Moerdijk in \cite{deformation-cohomology-LA} on the space of multi-derivations which controls deformations of Lie algebroids. See \cite{FZ1,FZ2} for more details on simultaneous deformations of algebras and morphisms and their applications in Poisson geometry.

\subsection{Summary of the results and outline of the paper}
Since Koszul--Vinberg structures are symmetric analogues of Poisson structures, while there is a full developed deformation and cohomology theories for Poisson structures, it is natural to develop the deformation and cohomology theories for Koszul--Vinberg structures.
 Note that a~Koszul--Vinberg structure on a left-symmetric algebroid is a relative Rota--Baxter operator on its sub-adjacent Lie algebroid with respect to a certain representation (Proposition~\ref{pro:LSBi-H}). Thus we develop the deformation and cohomology theories for relative Rota--Baxter operators on Lie algebroids first.
Inspired by the construction of the differential graded Lie algebra controlling deformations of a relative Rota--Baxter operator on a Lie algebra in \cite{TBGS}, we construct a suitable differential graded Lie algebra that controls deformations of relative Rota--Baxter operators on Lie algebroids. See \cite{Das,TBGS} for more details on cohomologies and deformations of relative Rota--Baxter operators on Lie algebras and associative algebras. Following the idea of M. Crainic and I. Moerdijk in \cite{deformation-cohomology-LA}, we also construct a differential graded Lie algebra that controls deformations of a left-symmetric algebroid. There is a natural homomorphism from the controlling algebra of relative Rota--Baxter operators to the controlling algebra of left-symmetric algebroids.
\mbox{Using} the controlling algebra of relative Rota--Baxter operators on Lie algebroids, we construct a~differential graded Lie algebra whose Maurer--Cartan elements are Koszul--Vinberg structures. Consequently, we establish a cohomology theory for Koszul--Vinberg structures. We~hope that our study on Koszul--Vinberg structures will draw more attention to the geometry of Koszul--Vinberg structures.

The paper is organized as follows.
In Section \ref{sec:MC-RRB-operator}, first we construct a differential graded Lie algebra that controls deformations of relative Rota--Baxter operators on Lie algebroids. Then we give the cohomology theories of relative Rota--Baxter operators on Lie algebroids induced by this differential graded Lie algebra. In Section \ref{sce:Cohomology RB}, we give the cohomology of Rota--Baxter operators on Lie algebroids and analyze the cohomology of the Rota--Baxter operator on an action Lie algebroid. In Section \ref{sec:Cohomology-RRB-operator}, first we show that infinitesimal deformations of a relative Rota--Baxter operator are classified by the first cohomology group. Then for an order $n$ deformation, we define its obstruction class, which is a cohomology class in the second cohomology group, and show that an order $n$ deformation of a relative Rota--Baxter operator is extendable if and only if its obstruction class is trivial.
In Section \ref{sec:relation to MN bracket}, we construct a graded Lie algebra whose Maurer--Cartan elements are precisely left-symmetric algebroids. The deformation cohomology of left-symmetric algebroids can be given directly using this graded Lie algebra. We~show that there is a~homomorphism from the controlling graded Lie algebra of relative Rota--Baxter operators on Lie algebroids to the controlling graded Lie algebra of left-symmetric algebroids. Consequently, there is a natural homomorphism from the cohomology groups of a relative Rota--Baxter operator to the deformation cohomology groups of the associated left-symmetric algebroid.
In Section \ref{sec:Cohomology-KV-structure}, we give the deformation and cohomology theories of Koszul--Vinberg structures on left-symmetric algebroids as applications of the above general framework.

\subsection{Conventions and notations}
We~will adopt the following notations and conventions throughout the paper.
Let $i$, $j$ be positive integers. A permutation $\sigma$ of $\{1,2,\dots,i+j\}$ is called an $(i;j)$-{\it unshuffle} if $\sigma(1)<\cdots<\sigma(i)$ and $\sigma(i+1)<\cdots<\sigma(i+j)$. The set of all $(i;j)$-unshuffle will be denoted by $\mathbb{S}_{(i;j)}$. The notion of an $(i_1,\dots,i_k)$-unshuffle and the set $\mathbb{S}_{(i_1,\dots,i_k)}$ are defined analogously.

\section[Maurer--Cartan characterizations and cohomologies of relative Rota--Baxter operators on Lie algebroids]
{Maurer--Cartan characterizations and cohomologies \\of relative Rota--Baxter operators on Lie algebroids}\label{sec:MC-RRB-operator}

\subsection[The controlling algebra of relative Rota--Baxter operators on Lie algebroids]
{The controlling algebra of relative Rota--Baxter operators \\on Lie algebroids}

In this subsection, given a Lie algebroid with a representation we construct a graded Lie algebra whose Maurer--Cartan elements characterize relative Rota--Baxter operators on Lie algebroids. Consequently, we obtain the differential graded Lie algebra that controls deformations of a~rela\-tive Rota--Baxter operator.

\begin{defi}
A {\it Lie algebroid} structure on a vector bundle $\huaA\longrightarrow M$ is
a pair that consists of a Lie algebra structure $[\cdot,\cdot]_\huaA$ on
the section space $\Gamma(\huaA)$ and a bundle map
$a_\huaA\colon \huaA\longrightarrow TM$, called the anchor, such that the
following relation is satisfied:
\begin{gather*}
[x,fy]_\huaA=f[x,y]_\huaA+a_\huaA(x)(f)y,\qquad \forall f\in
\CWM,\quad x,y\in\Gamma(\huaA).
\end{gather*}
When the image of $a_\huaA$ is of constant rank, we call $\huaA$ a {\it regular Lie algebroid}.
\end{defi}

For a vector bundle $E\longrightarrow M$, we denote by $\frkD(E)$ the gauge Lie algebroid of the frame bundle $\huaF(E)$, which is also called the covariant differential operator bundle of $E$. See \cite{General-theory-of-Lie-groupoid-and-Lie-algebroid} for more details on the gauge Lie algebroid.

Let $(\huaA,[\cdot,\cdot]_\huaA,a_\huaA)$ and $(\huaB,[\cdot,\cdot]_\huaB,a_\huaB)$ be two Lie
algebroids (with the same base), a {\it base-preserving homomorphism}
from $\huaA$ to $\huaB$ is a bundle map $\varphi\colon \huaA\longrightarrow \huaB$ such
that
\begin{eqnarray*}
 a_\huaB\circ\varphi=a_\huaA,\qquad
 \varphi[x,y]_\huaA=[\varphi(x),\varphi(y)]_\huaB,\qquad \forall x,y\in\Gamma(\huaA).
\end{eqnarray*}

Recall that a {\it representation} of a Lie algebroid $\huaA$
 on a vector bundle $E$ is a base-preserving morphism $\rho$ form $\huaA$ to the Lie algebroid $\frkD(E)$.
Denote a representation by $(E;\rho).$
The {\it dual representation} of a Lie algebroid $\huaA$ on $E^*$ is the bundle map $\rho^*\colon \huaA\longrightarrow \frkD(E^*)$ given by
\begin{gather*}
\langle \rho^*(x)(\xi),u\rangle=a_\huaA(x)\langle \xi,u\rangle-\langle \xi,\rho(x)(u)\rangle,\qquad \forall x\in \Gamma(\huaA),\quad\xi\in\Gamma(E^*),\quad u\in\Gamma(E).
\end{gather*}
Given a representation $(E;\rho)$, the cohomology of $\huaA$ with coefficients in $E$ is the cohomology of the cochain complex
$(\oplus_{k=0}^{+\infty}C^k(\huaA,E),\partial_\rho )$, where $C^k(\huaA,E)=\Gamma(\Hom(\wedge^k \huaA, E))$ and the coboundary operator $\partial_\rho\colon C^k(\huaA,E)\to C^{k+1}(\huaA,E)$
is defined by
\begin{align*}
 \partial_\rho\varpi(x_1,\dots,x_{k+1}){}=&\sum_{i=1}^{k+1}(-1)^{i+1} \rho(x_i)\varpi(x_1,\dots,\hat{x_i},\dots,x_{k+1})
 \\
 &+\sum_{i<j}(-1)^{i+j}\varpi([x_i,x_j]_\huaA,x_1,\dots,\hat{x_i},\dots,\hat{x_j},\dots,x_{k+1}),
\end{align*}
for $\varpi\in C^k(\huaA,E) $ and $x_1,\dots,x_{k+1}\in \Gamma(\huaA)$.

\begin{defi}
A {\it \LP pair} is a pair of a Lie algebroid $(\huaA,[\cdot,\cdot]_\huaA,a_\huaA)$ and a representation $\rho$ of $\huaA$ on a vector bundle $E$. We~denote a \LP pair by $(\huaA,[\cdot,\cdot]_\huaA,a_\huaA;\rho)$, or simply by $(\huaA;\rho)$.
\end{defi}

\begin{defi}[\cite{LiuShengBaiChen}]
Let $(\huaA,[\cdot,\cdot]_\huaA,a_\huaA;\rho)$ be a \LP pair. A bundle map
 $T\colon E\longrightarrow \huaA$ is called a {\it relative Rota--Baxter operator} on a \LP pair $(\huaA,[\cdot,\cdot]_\huaA,a_\huaA;\rho)$ if
 \begin{gather*}
 [T(u),T(v)]_{\huaA}=T(\rho(T(u))(v)-\rho(T(v))(u)), \qquad\forall u,v\in\Gamma{(E)}.
\end{gather*}
\end{defi}

\begin{defi}
Let $(\frkg=\oplus_{k\in Z} \frkg_{k},[\cdot,\cdot],d)$ be a differential graded Lie algebra. An element $\theta\in \frkg_{1}$ is called a {\it Maurer--Cartan element} of $\frkg$ if it satisfies
\begin{gather*}
{\rm d}\theta+\frac {1}{2}[\theta,\theta]=0.
\end{gather*}
\end{defi}

In particular, a Maurer--Cartan element of a graded Lie algebra $(\frkg=\oplus_{k\in Z} \frkg_{k},[\cdot,\cdot])$ is an element $\theta\in \frkg_{1}$ satisfying $[\theta,\theta]=0$.

Let $(\huaA,[\cdot,\cdot]_\huaA,a_\huaA;\rho)$ be a \LP pair. Consider the graded vector space
\begin{gather*}
\huaC^*(E,\huaA)=\oplus_{k\geq0}\huaC^k(E,\huaA),\qquad
\text{where}\quad\huaC^k(E,\huaA):=\Gamma\big(\Hom\big({\wedge}^{k}E,\huaA\big)\big).
\end{gather*}
Now we give the controlling algebra of relative Rota--Baxter operators on Lie algebroids, which is the main tool in the following study.
\begin{thm}\label{thm:graded Lie algebra}
	For $P\in \huaC^m(E,\huaA)$ and $Q\in \huaC^n(E,\huaA)$, we define a bracket operation
\begin{gather}
\Courant{P,Q}(u_1,u_2,\dots,u_{m+n})\nonumber
\\ \qquad
{}=\sum_{\sigma\in\mathbb{S}_{(m,1,n-1)}} (-1)^\sigma P(\rho(Q(u_{\sigma(1)},\dots,u_{\sigma(m)}))
u_{\sigma(m+1)},u_{\sigma(m+2)},\dots,u_{\sigma(m+n)})\label{eq:graded Lie bracket}
\\ \qquad\hphantom{=}
{}-(-1)^{mn}\!\!\!\sum_{\sigma\in\mathbb{S}_{(n,1,m-1)}}\!\!\! (-1)^\sigma Q(\rho(P(u_{\sigma(1)},\dots,u_{\sigma(n)}))u_{\sigma(n+1)},u_{\sigma(n+2)},\dots,u_{\sigma(m+n)})\nonumber
\\ \qquad\hphantom{=}
{}+(-1)^{mn}\!\!\!\sum_{\sigma\in\mathbb{S}_{(n,m)}} \!\!\!(-1)^\sigma [P(u_{\sigma(1)}, u_{\sigma(2)},\dots,u_{\sigma(n)}),Q(u_{\sigma(n+1)},u_{\sigma(n+2)},\dots,u_{\sigma(m+n)})]_{\huaA},
\nonumber
\end{gather}
where $ u_1,u_2,\dots,u_{m+n} \in \Gamma(E)$.
Then $(\huaC^*(E,\huaA),\Courant{\cdot,\cdot})$ is a graded Lie algebra and its Maurer--Cartan elements are precisely relative Rota--Baxter operators on $(\huaA;\rho)$.
	\end{thm}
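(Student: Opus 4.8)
The plan is to recognize the bracket $\Courant{\cdot,\cdot}$ as a derived bracket coming from a larger, already-understood graded Lie algebra, rather than verifying the graded Jacobi identity by brute force. Specifically, consider the direct sum $\huaA \oplus E$ as a vector bundle and look at the graded vector space $\bigoplus_{k\ge 0}\Gamma\big(\Hom(\wedge^k(\huaA\oplus E),\huaA\oplus E)\big)$, equipped with (a suitable regraded version of) the Nijenhuis--Richardson-type bracket of multilinear bundle maps; more precisely, since $\huaA$ is a Lie algebroid with anchor and $E$ carries the $\huaA$-representation $\rho$, the relevant ambient object is the graded Lie algebra of multi-derivations / the Chevalley--Eilenberg-type algebra $\frkD$ associated to the \LP pair, whose Maurer--Cartan elements encode Lie algebroid structures on $\huaA\oplus E$ that restrict correctly. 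I would first write down the element $\mu\in\frkD_1$ corresponding to the semidirect-product Lie algebroid structure on $\huaA\ltimes_\rho E$ (with $E$ an abelian ideal, bracket built from $[\cdot,\cdot]_\huaA$ and $\rho$, anchor $a_\huaA\circ\mathrm{pr}_\huaA$), and check $[\mu,\mu]=0$, i.e.\ that $\mu$ is Maurer--Cartan; this is exactly the standard fact that $\rho$ being a representation is equivalent to $\huaA\ltimes_\rho E$ being a Lie algebroid.

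Next I would identify $\huaC^k(E,\huaA)=\Gamma(\Hom(\wedge^k E,\huaA))$ with the abelian subalgebra (after the degree shift that takes $\huaC^k$ to degree $k$) of $\frkD$ consisting of those multilinear maps that eat only sections of $E$ and land in $\huaA$. The key computation is then to show that for $P,Q$ in this subspace,
\[
\Courant{P,Q} \;=\; (-1)^{?}\big[[\mu,P],Q\big],
\]
i.e.\ that the bracket defined by \eqref{eq:graded Lie bracket} is precisely the derived bracket $\big[[\mu,\cdot],\cdot\big]$ restricted to $\huaC^*(E,\huaA)$, up to a fixed sign convention. Reading off $[\mu,P]$ for $P\in\huaC^m(E,\huaA)$: the three terms in $[\mu,\cdot]$ pairing $\mu$ against $P$ produce exactly (i) the $\rho(Q(\dots))$-insertion term, (ii) its symmetric counterpart with $P$ and $Q$ exchanged, and (iii) the $[P(\dots),Q(\dots)]_\huaA$-term, with the unshuffle sets $\mathbb S_{(m,1,n-1)}$, $\mathbb S_{(n,1,m-1)}$, $\mathbb S_{(n,m)}$ and the signs $(-1)^{mn}$ appearing as the combinatorial bookkeeping of composing multilinear maps. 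Once this identification is made, the graded Jacobi identity and graded antisymmetry of $\Courant{\cdot,\cdot}$ are automatic: any derived bracket $[[\mu,\cdot],\cdot]$ of a Maurer--Cartan element $\mu$ on a graded Lie algebra is itself a graded Lie bracket on the abelian subalgebra on which it closes, and here $\huaC^*(E,\huaA)$ is closed under it because a composition that sees only $E$-inputs, uses one $\rho$ or $[\cdot,\cdot]_\huaA$, and otherwise passes $E$-sections through, again lands in $\Hom(\wedge^\bullet E,\huaA)$.

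For the Maurer--Cartan characterization: an element $T\in\huaC^1(E,\huaA)=\Gamma(\Hom(E,\huaA))$ is a bundle map $T\colon E\to\huaA$, and unwinding \eqref{eq:graded Lie bracket} with $m=n=1$ gives, for $u,v\in\Gamma(E)$,
\[
\tfrac12\Courant{T,T}(u,v)\;=\;[T(u),T(v)]_\huaA-T\big(\rho(T(u))v-\rho(T(v))u\big),
\]
so $\Courant{T,T}=0$ is precisely the relative Rota--Baxter identity. I would present this one-line computation explicitly since it is short and is the payoff of the theorem. The main obstacle is purely bookkeeping: correctly matching the signs and the unshuffle index sets between the abstract derived-bracket formula on $\frkD$ and the explicit formula \eqref{eq:graded Lie bracket}, in particular fixing the degree-shift convention (whether $\huaC^k$ sits in degree $k$ or $k-1$) so that $T$ lands in $\frkg_1$ and the $(-1)^{mn}$ factors come out as written; I expect no conceptual difficulty beyond that. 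An alternative, if one prefers to avoid setting up $\frkD$ in full, is to verify the graded Jacobi identity $\sum_{\text{cyclic}} (-1)^{|P||R|}\Courant{\Courant{P,Q},R}=0$ directly by expanding all terms and matching them via the unshuffle combinatorics and the facts that $[\cdot,\cdot]_\huaA$ is a Lie bracket and $\rho$ is a morphism of Lie algebroids — this is routine but lengthy, and I would only resort to it if the derived-bracket packaging proves awkward to state cleanly.
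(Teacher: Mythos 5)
Your proposal is correct in substance, but it takes a genuinely different route from the paper. The paper's proof has only two real steps: (i) a direct check that $\Courant{P,Q}$ is skew-symmetric and $C^\infty(M)$-linear in all arguments, so that the bracket of two bundle maps is again a bundle map --- this is where the anchor terms arising from $\rho(Q(\dots))(fu)$ and from $[P(\dots),fQ(\dots)]_\huaA$ must be seen to cancel, and it is the only point where the algebroid setting differs from the Lie algebra case; and (ii) an appeal to \cite{TBGS}, where the same formula is shown to define a graded Lie bracket on $\oplus_{k\geq0}\Hom_\Real\big({\wedge}^{k}\Gamma(E),\Gamma(\huaA)\big)$, i.e.\ at the level of $\Real$-multilinear maps on sections; the Maurer--Cartan computation is then the same one-line calculation you give. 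What you propose --- realizing $\Courant{\cdot,\cdot}$ as the derived bracket $[[\mu,\cdot],\cdot]$ of the Maurer--Cartan element $\mu$ encoding the semidirect product Lie algebroid structure on $\huaA\oplus E$ inside a Nijenhuis--Richardson-type algebra of multiderivations of $\huaA\oplus E$ --- is essentially a reconstruction, adapted to algebroids, of how \cite{TBGS} proves the fact that the paper simply cites. It buys a self-contained proof of the graded Jacobi identity and makes the unshuffle sets and the $(-1)^{mn}$ signs conceptual rather than ad hoc, at the cost of setting up the ambient graded Lie algebra (symbols of multiderivations, the degree shift, and the verification that $[\mu,\mu]=0$ encodes the representation condition). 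Two cautions. First, the closure check is not dispensable in your approach: the statement that $[[\mu,P],Q]$ is again a \emph{tensorial} map ${\wedge}^{m+n}E\to\huaA$ is exactly step (i) above, and since it is the one place where the anchor genuinely intervenes it deserves an explicit verification rather than the one-clause justification you give. Second, a small sign slip: with the conventions of \eqref{eq:graded Lie bracket} one gets $\half\Courant{T,T}(u,v)=T(\rho(Tu)v)-T(\rho(Tv)u)-[Tu,Tv]_\huaA$, the negative of what you wrote; this does not affect the equivalence of $\Courant{T,T}=0$ with the relative Rota--Baxter identity.
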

\begin{proof}
It~is straightforward to check that $\Courant{\cdot,\cdot}$ is skew-symmetric in all arguments and function linear. Thus $\Courant{P,Q}\in\huaC^{m+n}(E,\huaA)$ for all $P\in \huaC^m(E,\huaA)$ and $Q\in \huaC^n(E,\huaA)$, which implies that
$\Courant{\cdot,\cdot}$ is well defined.

It~was shown in \cite{TBGS} that the bracket $\Courant{\cdot,\cdot}$ provides a graded Lie algebra structure on the graded vector space $\oplus_{k\geq0}\Hom_\Real\big({\wedge}^{k}\Gamma(E),\Gamma(\huaA)\big)$. Thus $(\huaC^*(E,\huaA),\Courant{\cdot,\cdot})$ is a graded Lie algebra.

Let $T\colon E\rightarrow \huaA $ be a bundle map. By a direct calculation, we have
\begin{gather*}
\Courant{T,T}(u_1,u_2)=2(T(\rho(Tu_1)u_2)-T(\rho(Tu_2)u_1)-[Tu_1,Tu_2]_{\huaA}),\qquad
\forall u_1, u_2\in \Gamma(E).
\end{gather*}
Thus $T$ is a Maurer--Cartan element of the graded Lie algebra $(\huaC^*(E,\huaA),\Courant{\cdot,\cdot})$ if and only if $T$ is a relative Rota--Baxter operator on the \LP pair $(\huaA,[\cdot,\cdot]_\huaA,a_\huaA;\rho)$.
\end{proof}

Let $T\colon E\longrightarrow \huaA$ be a relative Rota--Baxter operator on the \LP pair $(\huaA,[\cdot,\cdot]_\huaA,a_\huaA;\rho)$. By~Theorem \ref{thm:graded Lie algebra}, $T$ is a Maurer--Cartan element of the graded Lie algebra $(\huaC^*(E,\huaA),\Courant{\cdot,\cdot})$. Note that $\tilde{\dM}_T:=\Courant{T,\cdot}$ is a graded derivation on the graded Lie algebra $(\huaC^*(E,\huaA),\Courant{\cdot,\cdot})$ satisfying $\tilde{\dM}_T^2=0$. Therefore, $(\huaC^*(E,\huaA),\Courant{\cdot,\cdot},\tilde{\dM}_T)$ is a differential graded Lie algebra.
\begin{thm}
Let $(\huaA,[\cdot,\cdot]_\huaA,a_\huaA;\rho)$ be a \LP pair and $T\colon E\longrightarrow \huaA$ a relative Rota--Baxter operator. Then for a bundle map $T'\colon E\longrightarrow \huaA$, $T+T'$ is still a relative Rota--Baxter operator on the \LP pair $(\huaA,[\cdot,\cdot]_\huaA,a_\huaA;\rho)$ if and only if $T'$ is a Maurer--Cartan element of the differential graded Lie algebra $\big(\huaC^*(E,\huaA),\Courant{\cdot,\cdot},\tilde{\dM}_T\big)$.
\end{thm}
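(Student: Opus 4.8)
The plan is to reduce the statement to the Maurer--Cartan equation by means of Theorem~\ref{thm:graded Lie algebra}. First I would observe that a bundle map $T'\colon E\longrightarrow\huaA$ is precisely an element of $\huaC^1(E,\huaA)=\Gamma(\Hom(E,\huaA))$, so that it is meaningful to ask whether $T'$ is a Maurer--Cartan element of the differential graded Lie algebra $\big(\huaC^*(E,\huaA),\Courant{\cdot,\cdot},\tilde{\dM}_T\big)$. By Theorem~\ref{thm:graded Lie algebra}, the bundle map $T+T'$ is a relative Rota--Baxter operator on the \LP pair $(\huaA,[\cdot,\cdot]_\huaA,a_\huaA;\rho)$ if and only if $T+T'$ is a Maurer--Cartan element of the graded Lie algebra $(\huaC^*(E,\huaA),\Courant{\cdot,\cdot})$, that is, $\Courant{T+T',T+T'}=0$.

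Next I would expand this bracket using bilinearity together with the graded skew-symmetry of $\Courant{\cdot,\cdot}$. Since $T$ and $T'$ both sit in degree $1$, we have $\Courant{T',T}=-(-1)^{1\cdot1}\Courant{T,T'}=\Courant{T,T'}$, hence
\[
\Courant{T+T',T+T'}=\Courant{T,T}+2\Courant{T,T'}+\Courant{T',T'}.
\]
Because $T$ is itself a relative Rota--Baxter operator, Theorem~\ref{thm:graded Lie algebra} gives $\Courant{T,T}=0$, so the right-hand side vanishes if and only if $\Courant{T,T'}+\frac12\Courant{T',T'}=0$.

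Finally I would identify this last equation with the Maurer--Cartan equation of the twisted differential graded Lie algebra: by definition $\tilde{\dM}_T T'=\Courant{T,\cdot}(T')=\Courant{T,T'}$, so
\[
\Courant{T,T'}+\tfrac12\Courant{T',T'}=\tilde{\dM}_T T'+\tfrac12\Courant{T',T'},
\]
which is exactly the condition for $T'\in\huaC^1(E,\huaA)$ to be a Maurer--Cartan element of $\big(\huaC^*(E,\huaA),\Courant{\cdot,\cdot},\tilde{\dM}_T\big)$. Chaining the three equivalences establishes both implications at once.

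There is no genuinely hard step here; the only points demanding attention are the sign in the graded skew-symmetry when collecting the two cross terms $\Courant{T,T'}$ and $\Courant{T',T}$, and the already-recorded facts that $\tilde{\dM}_T$ is a graded derivation with $\tilde{\dM}_T^2=0$, so that $\big(\huaC^*(E,\huaA),\Courant{\cdot,\cdot},\tilde{\dM}_T\big)$ really is a differential graded Lie algebra and its Maurer--Cartan equation is the one written above.
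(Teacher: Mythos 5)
Your proposal is correct and follows essentially the same route as the paper: expand $\Courant{T+T',T+T'}$ using bilinearity, the symmetry of the bracket on degree-one elements, and $\Courant{T,T}=0$, then recognize $\Courant{T,T'}+\frac12\Courant{T',T'}$ as the Maurer--Cartan equation for $\tilde{\dM}_T$. The only cosmetic difference is that you run the chain of identities as a two-way equivalence, whereas the paper writes out one implication and notes the converse is similar.
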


\begin{proof}
{\sloppy	Assume that $T+T'$ is a relative Rota--Baxter operator on the \LP pair $(\huaA,[\cdot,\cdot]_\huaA,a_\huaA;\rho)$. By the fact that $T$ is a relative Rota--Baxter operator, we have
	\begin{gather*}
	\tilde{\dM}_T T'+\half\Courantt{T',T'}=\Courantt{T,T'}+\half\Courantt{T',T'}
	=\half\Courantt{T+T',T+T'}=0.
		\end{gather*}}\noindent
		Thus $T'$ is a Maurer--Cartan element of the differential graded Lie algebra $(\huaC^*(E,\huaA),\Courant{\cdot,\cdot},\tilde{\dM}_T)$.
		
The converse can be proved similarly. We~omit the details.
\end{proof}

\subsection{Cohomologies of relative Rota--Baxter operators on Lie algebroids}
In this subsection, we give a cohomology theory of relative Rota--Baxter operators on Lie algebroids, which will be used to study formal deformations of relative Rota--Baxter operators.

Let $T\colon E\longrightarrow \huaA$ be a relative Rota--Baxter operator on a \LP pair $(\huaA,[\cdot,\cdot]_\huaA,a_\huaA;\rho)$. Define $\dM_{T}\colon \huaC^k(E,\huaA)\rightarrow \huaC^{k+1}(E,\huaA)$ by
\begin{gather*}
\dM_{T}P=(-1)^{k}\tilde{\dM}_T P=(-1)^{k}\Courant{T,P},\qquad \forall P\in \huaC^k(E,\huaA).
\end{gather*}
Since $\tilde{\dM}_T\circ\tilde{\dM}_T=0 $, we have $\dM_{T}\circ \dM_{T}=0$. Thus $(\huaC^*(E,\huaA)=\oplus_{k\geq0}\huaC^k(E,\huaA),\dM_{T})$ is a cochain complex. Note the sign in the differential $\dM_T$ is motivated by Theorem \ref{pro:coboundary operator relation} below.

\begin{defi}\label{defi:cohomology of RB-operator}
The cochain complex $(\huaC^*(E,\huaA)=\oplus_{k\geq0}\huaC^k(E,\huaA),\dM_{T})$ is called
the {\it cohomology complex} of the relative Rota--Baxter operator $T$ on the \LP pair $(\huaA,[\cdot,\cdot]_\huaA,a_\huaA;\rho)$. The corresponding $k$-th cohomology group, denoted by $\huaH_{T}^k(E,\huaA)$, is called the {\it $k$-th cohomology group} for the relative Rota--Baxter operator $T$.
\end{defi}

We~give the coboundary operator $\dM_{T}$ explicitly.

\begin{pro}\label{pro:concret formula}
 For $P\in\huaC^k(E,\huaA)$ and $u_1,\dots,u_{k+1}\in\Gamma{(E)}$, we have
\begin{gather}
\dM_{T}P(u_1,u_2,\dots,u_{k+1})\nonumber
\\ \qquad
{}=\sum_{i=1}^{k+1}(-1)^{i+1}[Tu_i,P(u_1,u_2,\dots,\hat{u_i},\dots,u_{k+1})]_{\huaA}\nonumber
\\ \qquad\hphantom{=}
{}+\sum_{i=1}^{k+1}(-1)^{i+1}T\rho(P(u_1,u_2,\dots,\hat{u_i},\dots,u_{k+1}))(u_i)\nonumber
\\ \qquad\hphantom{=}
{}+\sum_{1\leq i<j\leq {k+1}}(-1)^{i+j}P(\rho(Tu_i)(u_j)-\rho(Tu_j)(u_i),u_1,\dots,\hat{u_i},\dots,\hat{u_j},\dots,u_{k+1}).
\label{eq:coboundary operator of RB-operator}
\end{gather}
\end{pro}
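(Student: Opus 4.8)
The plan is to derive the explicit formula for $\dM_T P = (-1)^k \Courant{T,P}$ directly from the definition \eqref{eq:graded Lie bracket} of the bracket, specializing to $m=1$ (i.e., $Q=T$ plays the role of the degree-$1$ element) and $n=k$ (i.e., $P$ has degree $k$). First I would write out $\Courant{T,P}(u_1,\dots,u_{k+1})$ by substituting $m=1$, $n=k$ into the three sums. The first sum ranges over $\mathbb{S}_{(1,1,k-1)}$ and produces terms of the form $T(\rho(P(u_{\sigma(2)},\dots,u_{\sigma(k+1)}))u_{\sigma(1)})$ — wait, one has to be careful with which of $P,T$ sits in the outer versus inner slot; since the first term of \eqref{eq:graded Lie bracket} has $P$ outside and $Q$ inside, and here the ``$P$'' of the formula is $T$ and the ``$Q$'' is our cochain $P$, the first sum gives $T\big(\rho(P(\dots))(u_i)\big)$-type terms; the second sum (with the $(-1)^{mn}=(-1)^k$ sign) ranges over $\mathbb{S}_{(k,1,0)}$ and gives $P\big(\rho(Tu_i)(u_j)-\dots\big)$-type terms (the inner $(k,1,0)$-unshuffle reduces to choosing which index is the ``$1$'' slot, up to order); and the third sum over $\mathbb{S}_{(k,1)}$ gives the bracket terms $[Tu_i, P(\dots)]_\huaA$.

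Next I would convert each sum over unshuffles into a sum over a single index (or pair of indices) with the appropriate Koszul signs. For the first and third sums, an $(k,1)$- or $(1,k)$-type unshuffle is completely determined by the single element $u_i$ pulled out, and moving $u_i$ past the $i-1$ preceding entries contributes $(-1)^{i-1}=(-1)^{i+1}$; this reproduces the $\sum_{i=1}^{k+1}(-1)^{i+1}[Tu_i,P(\dots,\hat u_i,\dots)]_\huaA$ term and the $\sum_{i=1}^{k+1}(-1)^{i+1}T\rho(P(\dots,\hat u_i,\dots))(u_i)$ term. For the second sum, the outer structure pulls out two indices $i<j$ with the combined input $\rho(Tu_i)(u_j)-\rho(Tu_j)(u_i)$ placed first, and the sign bookkeeping (moving $u_i,u_j$ to the front, plus the overall $(-1)^k$ from $(-1)^{mn}$ and the $(-1)^k$ from $\dM_T=(-1)^k\Courant{T,\cdot}$, which cancel) yields $\sum_{i<j}(-1)^{i+j}P(\rho(Tu_i)(u_j)-\rho(Tu_j)(u_i),u_1,\dots,\hat u_i,\dots,\hat u_j,\dots)$.

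Finally I would assemble the three pieces, check that the overall sign $(-1)^k$ in $\dM_T$ is exactly what is needed to make the first two sums come out with sign $(-1)^{i+1}$ and the third with sign $(-1)^{i+j}$, and verify the antisymmetry/consistency of the $\rho(Tu_i)(u_j)-\rho(Tu_j)(u_i)$ combination. The main obstacle is purely the sign accounting: one must be scrupulous about (a) the Koszul sign $(-1)^\sigma$ attached to each unshuffle, (b) the relabeling when an $(m,1,n-1)$-unshuffle with $m=1$ degenerates, (c) the asymmetry between the roles of $T$ and $P$ in the first versus second sum of \eqref{eq:graded Lie bracket}, and (d) the extra $(-1)^{mn}$ factors and the global $(-1)^k$ defining $\dM_T$; everything else is a routine, if tedious, expansion. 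Once the signs are pinned down the identity \eqref{eq:coboundary operator of RB-operator} drops out immediately.
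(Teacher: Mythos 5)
Your proposal is correct and takes exactly the route the paper itself takes: the paper's proof of Proposition~\ref{pro:concret formula} is simply ``it follows from a direct calculation,'' namely the expansion of $(-1)^k\Courant{T,P}$ from \eqref{eq:graded Lie bracket} with $T$ of degree $1$ and $P$ of degree $k$ that you describe, including the identification of the three families of terms with the three lines of \eqref{eq:coboundary operator of RB-operator} and the cancellation of the $(-1)^{mn}=(-1)^k$ prefactor against the global $(-1)^k$ in $\dM_T$. The only caveat is that your uniform claim that the singleton unshuffles contribute $(-1)^{i-1}$ literally applies to the $(1,k)$-type sums (the bracket terms), whereas for the $T\rho(P(\cdots))(u_i)$ terms the relevant $(k,1)$-unshuffle carries sign $(-1)^{k+1-i}$, which only becomes $(-1)^{i+1}$ after combining with the overall $(-1)^k$ --- but this is precisely the sign accounting you flag as the point requiring care, and it works out as you predict.
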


\begin{proof}
 It~follows from a direct calculation.
\end{proof}

It~is obvious that $P\in\huaC^1(E,\huaA)$ is closed if and only if
\begin{gather*}
[Tu,P(v)]_{\huaA}-[Tv,P(u)]_{\huaA}-T(\rho(P(u))(v)-\rho(P(v))(u))-P(\rho(Tu)(v)-\rho(Tv)(u))=0,
\end{gather*}
where $u,v\in\Gamma(E)$.

In the sequel, we give an alternative characterization of $\dM_{T}$ using the cohomology of Lie algebroids. First we recall a useful fact.

\begin{lem}[\cite{LiuShengBaiChen}]\label{lem:new Lie algebroid}
 Let $T\colon E\longrightarrow \huaA$ be a relative Rota--Baxter operator on a \LP pair $(\huaA,[\cdot,\cdot]_\huaA,a_\huaA;\rho)$. Then $(E,[\cdot,\cdot]_T,a_T=a_\huaA\circ T)$ is a Lie algebroid, where the bracket $[\cdot,\cdot]_T$ is given by
 \begin{gather*}
 [u,v]_T=\rho(T(u))v-\rho(T(v))u,\qquad\forall u,v\in\Gamma(E).
 \end{gather*}
 Furthermore, $T$ is a Lie algebroid homomorphism from $(E,[\cdot,\cdot]_T,a_T)$ to $(\huaA,[\cdot,\cdot]_\huaA,a_\huaA)$.
\end{lem}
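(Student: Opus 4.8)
The plan is to verify the three Lie algebroid axioms for $(E,[\cdot,\cdot]_T,a_T)$ one at a time and then read off the homomorphism property directly from the definition of a relative Rota--Baxter operator. The first thing to record is that the defining identity of $T$ can be rewritten as $[Tu,Tv]_\huaA=T([u,v]_T)$, since $[u,v]_T=\rho(Tu)v-\rho(Tv)u$; this reformulation is what makes both the Jacobi identity and the homomorphism statement fall out.

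$\Real$-bilinearity and skew-symmetry of $[\cdot,\cdot]_T$ are immediate from the formula, because $\rho$ and $T$ are bundle maps and each $\rho(Tx)$ acts $\Real$-linearly on $\Gamma(E)$, with values again in $\Gamma(E)$. For the Leibniz rule I would use that $\rho$ takes values in $\frkD(E)$, so that $\rho(Tx)(fv)=f\rho(Tx)(v)+a_\huaA(Tx)(f)v$ for $f\in\CWM$, together with $\CWM$-linearity of $\rho$ and $T$ (whence $T(fv)=fT(v)$ and $\rho(fx)=f\rho(x)$). Then $[u,fv]_T=\rho(Tu)(fv)-\rho(T(fv))u=f\rho(Tu)v+a_\huaA(Tu)(f)v-f\rho(Tv)u=f[u,v]_T+(a_\huaA\circ T)(u)(f)\,v$, which is precisely the Leibniz identity with anchor $a_T=a_\huaA\circ T$.

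The core of the argument is the Jacobi identity, and this is where the relative Rota--Baxter condition is used. Using $[Tu,Tv]_\huaA=T[u,v]_T$ and the fact that $\rho$ is a representation, so that $\rho([x,y]_\huaA)=\rho(x)\rho(y)-\rho(y)\rho(x)$, one obtains
\begin{gather*}
[[u,v]_T,w]_T=\rho(T[u,v]_T)w-\rho(Tw)[u,v]_T=\rho([Tu,Tv]_\huaA)w-\rho(Tw)\big(\rho(Tu)v-\rho(Tv)u\big)
\\
=\rho(Tu)\rho(Tv)w-\rho(Tv)\rho(Tu)w-\rho(Tw)\rho(Tu)v+\rho(Tw)\rho(Tv)u,
\end{gather*}
and summing over the three cyclic permutations of $(u,v,w)$ all six terms cancel in pairs. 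Hence $(E,[\cdot,\cdot]_T,a_T)$ is a Lie algebroid.

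Finally, $T$ is a base-preserving homomorphism: $a_\huaA\circ T=a_T$ holds by the very definition of $a_T$, and $T[u,v]_T=[Tu,Tv]_\huaA$ is exactly the defining identity of $T$. The only point requiring any care is the Leibniz computation, where one must genuinely use that $\rho$ lands in $\frkD(E)$ (not merely that each $\rho(Tx)$ is $\Real$-linear) so that the anchor term is produced correctly; once the identity $[Tu,Tv]_\huaA=T[u,v]_T$ is in hand, the Jacobi verification is a one-line cancellation and presents no real obstacle.
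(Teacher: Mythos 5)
Your proof is correct. Note that the paper does not prove this lemma at all --- it is quoted from \cite{LiuShengBaiChen} --- so there is no in-paper argument to compare against; your verification (rewriting the relative Rota--Baxter identity as $[Tu,Tv]_\huaA=T[u,v]_T$, checking the Leibniz rule via the symbol of $\rho(Tu)\in\frkD(E)$, and reducing the Jacobi identity to a six-term cancellation using $\rho([Tu,Tv]_\huaA)=\rho(Tu)\rho(Tv)-\rho(Tv)\rho(Tu)$) is the standard direct one and each step checks out.
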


Moreover, the Lie algebroid $(E, [\cdot,\cdot]_{T}, a_T)$ represents on the vector bundle $\huaA$.
\begin{lem}\label{ex:MC-ten}
 Let $T\colon E\longrightarrow \huaA$ be a relative Rota--Baxter operator on a \LP pair $(\huaA;\rho)$. Define $\varrho\colon E\lon\frkD(\huaA)$ by
\begin{gather*}
\varrho(u)(x):=[Tu,x]_{\huaA}+T\rho(x)(u),\qquad x\in\Gamma{(\huaA)},\quad u\in\Gamma{(E)}.
\end{gather*}
Then $\varrho$ is a representation of the Lie algebroid $(E,[\cdot,\cdot]_{T},a_T=a_{\huaA}\circ T)$ on the vector bundle $\huaA$.
\end{lem}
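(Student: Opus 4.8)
The plan is to verify the three conditions defining a representation of a Lie algebroid: that each $\varrho(u)$ lies in $\frkD(\huaA)$ with symbol $a_T(u)$, that $\varrho$ is $\CWM$-linear in $u$ so that it descends to a bundle map $E\to\frkD(\huaA)$, and that $\varrho$ preserves brackets.

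First, for $f\in\CWM$, $x\in\Gamma(\huaA)$ and $u\in\Gamma(E)$, the Leibniz rule for $[\cdot,\cdot]_\huaA$ gives
\begin{gather*}
\varrho(u)(fx)=f[Tu,x]_\huaA+a_\huaA(Tu)(f)x+fT\rho(x)(u)=f\varrho(u)(x)+a_T(u)(f)x,
\end{gather*}
so $\varrho(u)\in\frkD(\huaA)$ with symbol $a_T(u)=a_\huaA\circ T(u)$, which is exactly the anchor compatibility. Replacing $u$ by $fu$ and using that $T$ is a bundle map together with the fact that $\rho(x)$ has symbol $a_\huaA(x)$, the term $-a_\huaA(x)(f)Tu$ produced by $[fTu,x]_\huaA$ cancels the term $+a_\huaA(x)(f)Tu$ produced by $T\rho(x)(fu)$, so $\varrho(fu)=f\varrho(u)$. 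Hence $\varrho$ is a bundle map over the identity.

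It remains to prove $\varrho([u,v]_T)=[\varrho(u),\varrho(v)]$ for $u,v\in\Gamma(E)$, the bracket on the right being the commutator of operators on $\Gamma(\huaA)$. Since $T$ is a Lie algebroid homomorphism by Lemma~\ref{lem:new Lie algebroid}, $T[u,v]_T=[Tu,Tv]_\huaA$, so
\begin{gather*}
\varrho([u,v]_T)(x)=[[Tu,Tv]_\huaA,x]_\huaA+T\rho(x)\big(\rho(Tu)v-\rho(Tv)u\big).
\end{gather*}
Expanding $[\varrho(u),\varrho(v)](x)$ yields eight terms: the two iterated-bracket terms $[Tu,[Tv,x]_\huaA]_\huaA-[Tv,[Tu,x]_\huaA]_\huaA$ combine to $[[Tu,Tv]_\huaA,x]_\huaA$ by the Jacobi identity in $\huaA$, matching the first term above. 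For the remaining six terms I would first apply the relative Rota--Baxter identity in the form $[Tu,Tw]_\huaA+T\rho(Tw)(u)=T\rho(Tu)w$ with $w=\rho(x)(v)$ and $w=\rho(x)(u)$; this collapses four of these terms into $T\rho(Tu)\rho(x)(v)-T\rho(Tv)\rho(x)(u)$, leaving
\begin{gather*}
T\rho(Tu)\rho(x)(v)-T\rho(Tv)\rho(x)(u)+T\rho([Tv,x]_\huaA)(u)-T\rho([Tu,x]_\huaA)(v).
\end{gather*}
Applying the representation property $\rho([y,z]_\huaA)=\rho(y)\rho(z)-\rho(z)\rho(y)$ to the last two summands reduces this to $T\rho(x)\rho(Tu)(v)-T\rho(x)\rho(Tv)(u)=T\rho(x)\big(\rho(Tu)v-\rho(Tv)u\big)$, which is precisely the second summand of $\varrho([u,v]_T)(x)$, completing the verification.

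The first two conditions are routine Leibniz-rule computations. The real content is the bracket-preservation identity; its proof is a bookkeeping exercise, but one that must invoke, in exactly the right order, the Jacobi identity in $\huaA$, the homomorphism property of $T$ from Lemma~\ref{lem:new Lie algebroid}, the defining relation of the relative Rota--Baxter operator $T$, and the fact that $\rho$ is a Lie algebroid representation. That is the only step I expect to require care.
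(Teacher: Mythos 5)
Your proof is correct and follows the same route as the paper: verify the two Leibniz-rule conditions and then check $\varrho([u,v]_T)=[\varrho(u),\varrho(v)]$. The only difference is that the paper leaves the bracket-preservation step as "straightforward to check," whereas you carry it out in full (correctly invoking the Jacobi identity, the Rota--Baxter identity, and the flatness of $\rho$ in the right places).
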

\begin{proof}
By a direct calculation, we have
\begin{align*}
\varrho(fu)(x)&=[T(fu),x]_{\huaA}+T\rho(x)(fu)\\
&=f[Tu,x]_{\huaA}-a_{\huaA}(x)(f)(Tu)+fT\rho(x)(u)+Ta_{\huaA}(x)(f)u\\
&=f\varrho(u)(x)
\end{align*}
and
\begin{align*}
\varrho(u)(fx)&=[Tu,fx]_{\huaA}+T\rho(fx)(u)\\
&=f[Tu,x]_{\huaA}+a_{\huaA}(Tu)(f)(x)+fT\rho(x)(u)\\
&=f\varrho(u)(x)+a_{T}(u)(f)(x).
\end{align*}
It~is straightforward to check that $\varrho[u,v]_T=\varrho(u)\varrho(v)-\varrho(v)\varrho(u)$. Thus $\varrho$ is a representation of the Lie algebroid $(E,[\cdot,\cdot]_{T},a_T)$ on $\huaA$.
\end{proof}

\begin{rmk}
 Let $T\colon E\longrightarrow \huaA$ be a relative Rota--Baxter operator on a \LP pair $(\huaA;\rho)$. It~is straightforward to check that $(\huaA,E,\rho,\varrho)$ is a matched pair of Lie algebroids, where the Lie algebroid structure on $E$ is the Lie algebroid $(E,[\cdot,\cdot]_{T},a_T)$. See \cite{Mok} for more details on matched pairs of Lie algebroids.
\end{rmk}

{\sloppy\begin{thm}\label{pro:coboundary operator relation}
 Let $T\colon E\longrightarrow \huaA$ be a relative Rota--Baxter operator on a \LP pair $(\huaA,[\cdot,\cdot]_\huaA,a_\huaA;\rho)$. Then the coboundary operator of the relative Rota--Baxter operator $T$ is exactly the coboundary operator of the Lie algebroid $(E, [\cdot,\cdot]_{T}, a_T)$ with coefficients in the representation $(\huaA;\varrho)$, that is,
 $\dM_T=\partial_{\varrho}$.
\end{thm}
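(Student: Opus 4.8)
The plan is to prove the equality of operators termwise. First I would observe that the two cochain complexes in play live on the same graded vector space: by Lemma~\ref{lem:new Lie algebroid} the triple $(E,[\cdot,\cdot]_T,a_T)$ is a genuine Lie algebroid, by Lemma~\ref{ex:MC-ten} $(\huaA;\varrho)$ is a representation of it, and the associated Lie algebroid cohomology complex is built on $C^k(E,\huaA)=\Gamma(\Hom(\wedge^k E,\huaA))=\huaC^k(E,\huaA)$, the very space on which $\dM_T$ acts. So $\dM_T=\partial_\varrho$ is a meaningful identity of operators $\huaC^k(E,\huaA)\to\huaC^{k+1}(E,\huaA)$. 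Applying the general Lie algebroid coboundary formula recalled in Section~\ref{sec:MC-RRB-operator} to $(E,[\cdot,\cdot]_T,a_T;\varrho)$ gives, for $P\in\huaC^k(E,\huaA)$ and $u_1,\dots,u_{k+1}\in\Gamma(E)$,
\begin{gather*}
\partial_\varrho P(u_1,\dots,u_{k+1})=\sum_{i=1}^{k+1}(-1)^{i+1}\varrho(u_i)\big(P(u_1,\dots,\hat{u_i},\dots,u_{k+1})\big)\\
{}+\sum_{1\leq i<j\leq k+1}(-1)^{i+j}P\big([u_i,u_j]_T,u_1,\dots,\hat{u_i},\dots,\hat{u_j},\dots,u_{k+1}\big).
\end{gather*}

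Next I would substitute the two structure maps of this Lie algebroid with representation. Unfolding $\varrho(u_i)(x)=[Tu_i,x]_\huaA+T\rho(x)(u_i)$ from Lemma~\ref{ex:MC-ten} splits the first sum above into $\sum_{i}(-1)^{i+1}[Tu_i,P(u_1,\dots,\hat{u_i},\dots,u_{k+1})]_\huaA$ and $\sum_{i}(-1)^{i+1}T\rho(P(u_1,\dots,\hat{u_i},\dots,u_{k+1}))(u_i)$, which are exactly the first and second sums in \eqref{eq:coboundary operator of RB-operator}; unfolding $[u_i,u_j]_T=\rho(Tu_i)u_j-\rho(Tu_j)u_i$ from Lemma~\ref{lem:new Lie algebroid} turns the remaining sum into the third sum of \eqref{eq:coboundary operator of RB-operator}. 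Comparing with Proposition~\ref{pro:concret formula} yields $\partial_\varrho P=\dM_T P$ for every $P$ and every degree, which is the claim.

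There is essentially no hard step: the substantive computation is the one already performed for Proposition~\ref{pro:concret formula}, and the theorem is the bookkeeping observation that the formula obtained there is precisely the Lie algebroid differential of $(E,[\cdot,\cdot]_T,a_T)$ with coefficients in $(\huaA;\varrho)$. The one point that genuinely needs care is the sign normalization: the factor $(-1)^k$ in $\dM_T P=(-1)^k\Courant{T,P}$ is exactly what converts the signs generated by the unshuffle sums in \eqref{eq:graded Lie bracket} (specialized to the case where the first argument is the degree-one element $T$) into the standard $(-1)^{i+1}$ and $(-1)^{i+j}$ of the Chevalley--Eilenberg complex; this is the sign choice flagged in the text as being motivated by the present theorem. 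A reader preferring an argument independent of Proposition~\ref{pro:concret formula} can instead expand $\Courant{T,P}$ directly from \eqref{eq:graded Lie bracket} and carry out the same sign matching. Finally, once $\dM_T=\partial_\varrho$ is established, the relation $\partial_\varrho\circ\partial_\varrho=0$ is consistent with $\dM_T\circ\dM_T=0$, so no separate verification of nilpotency is required.
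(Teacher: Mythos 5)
Your proposal is correct and follows essentially the same route as the paper: both proofs reduce the theorem to matching the explicit formula of Proposition~\ref{pro:concret formula} against the Lie algebroid coboundary $\partial_\varrho$ by substituting the definitions of $\varrho$ from Lemma~\ref{ex:MC-ten} and of $[\cdot,\cdot]_T$ from Lemma~\ref{lem:new Lie algebroid} (the paper simply runs the identification in the other direction, grouping the terms of $\dM_T P$ into $\varrho(u_i)P(\ldots)$ and $P([u_i,u_j]_T,\ldots)$). Your additional remarks on the sign normalization and on the underlying graded vector spaces being identical are accurate but not needed beyond what the paper records.
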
}

\begin{proof}
By Proposition \ref{pro:concret formula}, for any $P\in\huaC^k(E,\huaA)$ and $u_1,\dots,u_{k+1}\in\Gamma{(E)}$, we have
\begin{gather*}
\dM_{T}P(u_1,u_2,\dots,u_{k+1})
\\ \qquad
{}=\sum_{i=1}^{k+1}(-1)^{i+1}[Tu_i,P(u_1,u_2,\dots,\hat{u_i},\dots,u_{k+1})]_{\huaA}
\\ \qquad\hphantom{=}
{}+\sum_{i=1}^{k+1}(-1)^{i+1}T\rho(P(u_1,u_2,\dots,\hat{u_i},\dots,u_{k+1}))(u_i)
\\ \qquad\hphantom{=}
{}+\sum_{1\leq i<j\leq {k+1}}(-1)^{i+j}P(\rho(Tu_i)(u_j)-\rho(Tu_j)(u_i),u_1,\dots,\hat{u_i},\dots,\hat{u_j},\dots,u_{k+1}),
\\ \qquad
{}=\sum_{i=1}^{k+1}(-1)^{i+1}\varrho(u_i)P(u_1,\dots,\hat{u_i},\dots,u_{k+1})
\\ \qquad\hphantom{=}
{}+\sum_{i<j}(-1)^{i+j}P([u_i,u_j]_T,u_1,\dots,\hat{u_i},\dots,\hat{u_j},\dots,u_{k+1})
\\ \qquad
{}=\partial_{\varrho}P(u_1,u_2,\dots,u_{k+1}).
\end{gather*}
The conclusion follows.
\end{proof}

\section{Cohomologies of Rota--Baxter operators on Lie algebroids}\label{sce:Cohomology RB}
In this section, first we give the cohomologies of Rota--Baxter operators on Lie algebroids with the help of the general framework of the cohomologies of relative Rota--Baxter operators. Then we study the cohomologies of the Rota--Baxter operator arising from an action of a Rota--Baxter Lie algebra on a manifold.

 Now we recall the notion of a Rota--Baxter operator on a Lie algebroid given in \cite{Rota--Baxter_Lie_algebroid}.
\begin{defi}
 A {\it Rota--Baxter operator} on a regular Lie algebroid $(\huaA,[\cdot,\cdot]_\huaA,a_\huaA)$ is a bundle map $R\colon \Ker(a_\huaA)\to \huaA$ such that
 \begin{gather*}
 [R(x),R(y)]_\huaA=R([R(x),y]_\huaA+[x,R(y)]_\huaA),\qquad \forall x,y\in\Gamma(\Ker(a_\huaA)).
 \end{gather*}
\end{defi}

For any $x\in\Gamma(\huaA)$, we define $\huaL_x\colon \Gamma(\huaA)\longrightarrow\Gamma(\huaA)$ by $\huaL_x(y)=[x,y]_\huaA$ for $y\in\Gamma(A)$. Then~$\huaL$ gives a representation of the Lie algebroid $\huaA$ on $ \Ker(a_\huaA)$. Thus a Rota--Baxter operator on a regular Lie algebroid $(\huaA,[\cdot,\cdot]_\huaA,a_\huaA)$ is a relative Rota--Baxter operator on the \LP pair $(\huaA,[\cdot,\cdot]_\huaA,a_\huaA;\huaL)$.

A {\it Rota--Baxter operator} on a Lie algebra $(\g,[\cdot,\cdot]_\g)$ is a linear map $\huaB\colon \g\to \g$ such that
\begin{gather*}
 [\huaB(u),\huaB(v)]_\g=\huaB([\huaB(u),v]_\g+[u,\huaB(v)]_\g),\qquad \forall u,v\in\g.
 \end{gather*}
The pair $(\g,\huaB)$ is called a {\it Rota--Baxter Lie algebra}.
\begin{rmk}
Since a vector space is a vector bundle over a point, a Lie algebra is naturally
a Lie algebroid with the anchor being zero. It~is not hard to see that a Rota--Baxter operator on a Lie algebroid reduces
to a Rota--Baxter operator on a Lie algebra when the underlying Lie algebroid reduces to a Lie algebra.
\end{rmk}

By Theorem \ref{thm:graded Lie algebra}, we have
\begin{cor}
Let $(\huaA,[\cdot,\cdot]_\huaA,a_\huaA)$ be a regular Lie algebroid. Then
\begin{itemize}\itemsep=0pt
 \item[$(i)$] $\bigl(\oplus_{k=0}^{\dim (\Ker(a_\huaA))} \Gamma\big(\Hom\big({\wedge}^{k}\Ker(a_\huaA),\huaA\big)\big),\Courant{\cdot,\cdot}\bigr)$ is a graded Lie algebra, where the graded Lie bracket $\Courant{\cdot,\cdot}$ is given by~\eqref{eq:graded Lie bracket}.
 \item[$(ii)$] $R$ is a Rota--Baxter operator on the regular Lie algebroid $\huaA$ if and only if $R$ is a Maurer--Cartan element of $\bigl(\oplus_{k=0}^{\dim (\Ker(a_\huaA))}\Gamma\big(\Hom\big({\wedge}^{k}\Ker(a_\huaA),\huaA\big)\big),\Courant{\cdot,\cdot}\bigr)$.
\end{itemize}
\end{cor}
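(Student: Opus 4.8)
The statement is a direct specialization of Theorem~\ref{thm:graded Lie algebra}, so the plan is to exhibit the relevant \LP pair and check that nothing degenerates in the constant-rank situation. First I would take the \LP pair $(\huaA,[\cdot,\cdot]_\huaA,a_\huaA;\huaL)$ whose representation space is the vector bundle $E=\Ker(a_\huaA)$. Here regularity of $\huaA$ is used exactly twice and only in elementary ways: it guarantees that $\Ker(a_\huaA)$ is a genuine vector bundle (the anchor has locally constant rank), and it makes the fibre dimension $\dim(\Ker(a_\huaA))$ well defined, so that ${\wedge}^k\Ker(a_\huaA)=0$ for $k>\dim(\Ker(a_\huaA))$ and the direct sum in the statement is the finite sum $\huaC^*(\Ker(a_\huaA),\huaA)=\oplus_{k\ge 0}\huaC^k(\Ker(a_\huaA),\huaA)$, with $\huaC^k(\Ker(a_\huaA),\huaA)=\Gamma\big(\Hom\big({\wedge}^{k}\Ker(a_\huaA),\huaA\big)\big)$.

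Next I would record why $\huaL$, given by $\huaL_x(y)=[x,y]_\huaA$, is a representation of $\huaA$ on $\Ker(a_\huaA)$ (this is already asserted in the text preceding the corollary, but it is the only point of substance here, and the one place where the kernel condition is really needed). Concretely: for $y\in\Gamma(\Ker(a_\huaA))$ one has $a_\huaA([x,y]_\huaA)=[a_\huaA(x),a_\huaA(y)]=0$, so $\huaL_x$ preserves $\Gamma(\Ker(a_\huaA))$; the identity $\huaL_x(fy)=f\huaL_x(y)+a_\huaA(x)(f)y$ is the Lie algebroid axiom, so $\huaL_x$ is a section of $\frkD(\Ker(a_\huaA))$ with symbol $a_\huaA(x)$; the assignment $x\mapsto\huaL_x$ is $\CWM$-linear on $\Gamma(\Ker(a_\huaA))$ since $[fx,y]_\huaA=f[x,y]_\huaA-a_\huaA(y)(f)x=f[x,y]_\huaA$ for $y\in\Gamma(\Ker(a_\huaA))$, so it is a bundle map; and the Jacobi identity for $[\cdot,\cdot]_\huaA$ yields $\huaL_{[x,y]_\huaA}=\huaL_x\huaL_y-\huaL_y\huaL_x$. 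Hence $\huaL$ is a base-preserving homomorphism from $\huaA$ to $\frkD(\Ker(a_\huaA))$, and $(\huaA;\huaL)$ is a \LP pair.

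Finally I would invoke Theorem~\ref{thm:graded Lie algebra} for this \LP pair. Part~(i) is precisely its conclusion that $\big(\huaC^*(\Ker(a_\huaA),\huaA),\Courant{\cdot,\cdot}\big)$ is a graded Lie algebra, the bracket being~\eqref{eq:graded Lie bracket} with $\rho$ taken to be $\huaL$. For part~(ii), that theorem identifies the Maurer--Cartan elements of this graded Lie algebra with relative Rota--Baxter operators on $(\huaA;\huaL)$, i.e., with bundle maps $R\colon\Ker(a_\huaA)\to\huaA$ satisfying $[R(x),R(y)]_\huaA=R\big(\huaL_{R(x)}(y)-\huaL_{R(y)}(x)\big)=R\big([R(x),y]_\huaA+[x,R(y)]_\huaA\big)$, which is exactly the defining identity of a Rota--Baxter operator on the regular Lie algebroid $\huaA$. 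I do not expect a genuine obstacle; the only step demanding a little care is the verification that $\huaL$ is a representation, and within it the use of $a_\huaA(y)=0$ for $y\in\Gamma(\Ker(a_\huaA))$ to obtain $\CWM$-linearity in the first argument.
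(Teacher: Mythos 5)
Your proposal is correct and follows exactly the paper's route: the paper likewise treats the corollary as an immediate specialization of Theorem~\ref{thm:graded Lie algebra} to the \LP pair $(\huaA,[\cdot,\cdot]_\huaA,a_\huaA;\huaL)$ with $E=\Ker(a_\huaA)$, giving no further argument. The only difference is that you spell out the verification that $\huaL$ is a representation on $\Ker(a_\huaA)$ (preservation of the kernel, Leibniz rule, $\CWM$-linearity in $x$, flatness), which the paper asserts without proof in the paragraph preceding the corollary; your details there are accurate.
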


By Lemmas \ref{lem:new Lie algebroid} and~\ref{ex:MC-ten}, we have

\begin{cor}
 Let $R\colon \Ker(a_\huaA)\longrightarrow \huaA$ be a Rota--Baxter operator on a regular Lie algebroid $(\huaA,[\cdot,\cdot]_\huaA,a_\huaA)$. Then $(\Ker(a_\huaA),[\cdot,\cdot]_R,a_R=a_\huaA\circ R)$ is a Lie algebroid, where the bracket $[\cdot,\cdot]_R$ is given by
\begin{gather*}
[u,v]_R:=[Ru,v]_\huaA+[u,Rv]_\huaA,\qquad \forall u,v\in\Gamma(\Ker(a_\huaA)).
\end{gather*}
 Furthermore, $ \mathbf{\varrho}\colon \Ker(a_\huaA)\lon\Der(\huaA)$ defined by
 \begin{gather*}
 \varrho(u)y:=[R(u),y]_\huaA-R[u,y]_\huaA, \qquad \forall y\in\Gamma(\huaA)
 \end{gather*}
 gives a representation of the Lie algebroid $(\Ker(a_\huaA),[\cdot,\cdot]_R,a_R)$ on the vector bundle $\huaA$.
\end{cor}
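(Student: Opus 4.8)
The plan is to deduce this corollary directly from Lemmas~\ref{lem:new Lie algebroid} and~\ref{ex:MC-ten} by specializing the general machinery of relative Rota--Baxter operators to the \LP pair $(\huaA,[\cdot,\cdot]_\huaA,a_\huaA;\huaL)$, where $\huaL_x(y)=[x,y]_\huaA$ is the representation of $\huaA$ on $\Ker(a_\huaA)$ recalled above. Since $\huaA$ is regular, $\Ker(a_\huaA)$ is a genuine vector subbundle of $\huaA$, so it is legitimate to take $E=\Ker(a_\huaA)$ in that framework; and, as already noted, a Rota--Baxter operator $R\colon\Ker(a_\huaA)\to\huaA$ is exactly a relative Rota--Baxter operator on $(\huaA;\huaL)$. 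The whole proof then amounts to feeding this identification into the two lemmas and matching formulas.

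First I would invoke Lemma~\ref{lem:new Lie algebroid} with $T=R$, $E=\Ker(a_\huaA)$ and $\rho=\huaL$. This immediately gives that $(\Ker(a_\huaA),[\cdot,\cdot]_R,a_R=a_\huaA\circ R)$ is a Lie algebroid, and unwinding the bracket $[u,v]_R=\huaL(R(u))v-\huaL(R(v))u$ yields $[u,v]_R=[R(u),v]_\huaA-[R(v),u]_\huaA=[R(u),v]_\huaA+[u,R(v)]_\huaA$, which is the stated formula. (That $[u,v]_R$ stays in $\Gamma(\Ker(a_\huaA))$ is built into the lemma, but can also be seen by hand from $a_\huaA[R(u),v]_\huaA=[a_\huaA(R(u)),a_\huaA(v)]=0$ and the symmetric computation for the second term, using $v,u\in\Gamma(\Ker(a_\huaA))$.)

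Next I would apply Lemma~\ref{ex:MC-ten} with the same data: the map $\varrho(u)(x)=[R(u),x]_\huaA+R\huaL(x)(u)=[R(u),x]_\huaA+R[x,u]_\huaA=[R(u),x]_\huaA-R[u,x]_\huaA$ is precisely the operator in the statement, and the lemma asserts that it defines a representation of $(\Ker(a_\huaA),[\cdot,\cdot]_R,a_R)$ on the vector bundle $\huaA$. The Leibniz-rule computation inside Lemma~\ref{ex:MC-ten} moreover shows $\varrho(u)$ is a derivation over the symbol $a_R(u)=a_\huaA(R(u))$, so $\varrho$ indeed takes values in $\Der(\huaA)$ as written.

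Since the argument is nothing more than specializing two previously established results, I do not expect any genuine obstacle; the only care needed is the routine bookkeeping to confirm that $\huaL$ is a well-defined $\huaA$-representation on $\Ker(a_\huaA)$ and that the substituted forms of the bracket in Lemma~\ref{lem:new Lie algebroid} and of $\varrho$ in Lemma~\ref{ex:MC-ten} reproduce exactly $[\cdot,\cdot]_R$ and $\varrho$ of the corollary. Both are immediate sign checks, which I would carry out and then conclude.
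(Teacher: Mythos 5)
Your proposal is correct and follows exactly the paper's own route: the corollary is presented there as an immediate consequence of Lemmas~\ref{lem:new Lie algebroid} and~\ref{ex:MC-ten} specialized to $T=R$, $E=\Ker(a_\huaA)$, $\rho=\huaL$, and your unwinding of the bracket and of $\varrho(u)(x)=[Ru,x]_\huaA+R[x,u]_\huaA=[Ru,x]_\huaA-R[u,x]_\huaA$ matches the stated formulas. Nothing further is needed.
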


As a special case of Definition \ref{defi:cohomology of RB-operator}, we have
\begin{defi}
 Let $R$ be a Rota--Baxter operator on a regular Lie algebroid $(\huaA,[\cdot,\cdot]_\huaA,a_\huaA)$. The cohomology of the cochain complex $\big({\oplus}_k\huaC^k(\Ker(a_\huaA),\huaA),\dM_R\big)$, where the coboundary operator $\dM_R\colon \huaC^k(\Ker(a_\huaA),\huaA)\to\huaC^{k+1}(\Ker(a_\huaA),\huaA)$ is given by \eqref{eq:coboundary operator of RB-operator} with $T=R$ and $\rho=\huaL$, is called the {\it cohomology of the Rota--Baxter operator} $R$. The corresponding $k$-th cohomology group, which we denote by $\huaH_{R}^k(\Ker(a_\huaA),\huaA)$, is called the {\it $k$-th cohomology group} for the Rota--Baxter operator~$R$.
\end{defi}

At the end of this section, we analyze the cohomology of the Rota--Baxter operator arising from an action of a Rota--Baxter Lie algebra on a manifold.

Let $(\frkg,[\cdot,\cdot]_{\frkg})$ be a Lie algebra. An {\it action}
of $\frkg$ on a manifold $M$ is a homomorphism
of Lie algebras $\phi \colon (\g, [\cdot,\cdot]_\g)\to(\mathfrak{X}(M),[\cdot,\cdot]_{\frkX(M)})$. For a Rota--Baxter operator $\huaB$ on a Lie algebra $(\g,[\cdot,\cdot]_\g)$, the bracket
{\sloppy\begin{gather*}
 [u,v]_\huaB=[\huaB(u),v]_\g+[u,\huaB(v)]_\g, \qquad \forall u,v\in\g
\end{gather*}
defines another Lie algebra structure on $\g$. Recall from \cite{Rota--Baxter_Lie_algebroid} that an action of a Rota--Baxter Lie algebra $(\g,\huaB)$
 on a manifold $M$ is a homomorphism
of Lie algebras $\phi \colon (\g, [\cdot,\cdot]_\huaB)\to(\mathfrak{X}(M),[\cdot,\cdot]_{\frkX(M)})$.
Let $\phi \colon (\g, [\cdot,\cdot]_\huaB)\to(\mathfrak{X}(M),[\cdot,\cdot]_{\frkX(M)})$ be an action of the Rota--Baxter Lie algebra $(\g,\huaB)$ on $M$. Consider the direct sum bundle $\huaA := (M \times\g)\oplus TM$. Then $\Gamma(\huaA) = (\CWM\otimes\g)\oplus\mathfrak{X}(M)$. There is naturally a Lie algebroid structure on $\huaA$ whose anchor~$a_{\huaA}$ is the projection to $TM$
and whose bracket is determined by
\begin{gather*}
[fu+X,gv+Y]_{\huaA} := fg[u, v]_\g+X(g)v-Y(f)u+[X,Y]_{{\frkX(M)}},
\end{gather*}}\noindent
for all $X,Y\in\mathfrak{X}(M), u,v\in\g, f,g\in\CWM$. Consider the bundle map $R\colon \Ker(a_\huaA)=M\times\g\to (M\times\g)\oplus TM$ defined by
\begin{gather}\label{RB0}
 R(m,u):=(m,\huaB(u),\phi(u)(m)),\qquad \forall m\in M,\quad u\in\g.
\end{gather}
It~was proved in \cite{Rota--Baxter_Lie_algebroid}
 that the bundle map $R$ defined by $(\ref{RB0})$ is a Rota--Baxter operator on the Lie algebroid $((M \times\g)\oplus TM,[\cdot,\cdot]_{\huaA},a_{\huaA})$.

In the following, we establish the relations among the cohomology of the Rota--Baxter operator $R$ on the Lie algebroid $((M \times\g)\oplus TM,[\cdot,\cdot]_{\huaA},a_{\huaA})$, the cohomology of the Rota--Baxter operator $\huaB$ on the Lie algebra $\g$ and the cohomology of the Lie algebra homomorphism $\phi\colon (\g, [\cdot,\cdot]_\huaB)\to\mathfrak{X}(M)$.

The cohomology of a Rota--Baxter operator $\huaB$ on a Lie algebra $\g$ is the cohomology of the cochain complex $\big({\oplus}_{k=0}^{+\infty}\huaC^k(\g,\g),\dM_\huaB\big)$, where $ \huaC^k(\g,\g)=\Hom\big({\wedge}^k\g,\g\big)$ and the coboundary operator $\dM_\huaB\colon \Hom\big({\wedge}^k\g,\g\big)\lon \Hom\big({\wedge}^{k+1}\g,\g\big)$ is given by
\begin{gather*}
 \dM_\huaB f(u_1,\dots,u_{k+1})
 \\ \qquad
{}:=\sum_{i=1}^{k+1}(-1)^{i+1}[\huaB(u_i),f(u_1,\dots,\hat{u_i},\dots, u_{k+1})]_\g
\\ \qquad\hphantom{=}
{}+\sum_{i=1}^{k+1}(-1)^{i+1}\huaB[f(u_1,\dots,\hat{u_i},\dots, u_{k+1}),u_i]_\g
\\ \qquad\hphantom{=}
{}+\sum_{1\le i<j\le k+1}(-1)^{i+j}f([\huaB(u_i),u_j]_\g-[\huaB(u_j),u_i]_\g,u_1,\dots,\hat{u_i},\dots,\hat{u_j},\dots, u_{k+1}).
\end{gather*}
See \cite{TBGS} for more details about the cohomology of Rota--Baxter operators on Lie algebras.

Let $\phi \colon (\g, [\cdot,\cdot]_\huaB)\to(\mathfrak{X}(M),[\cdot,\cdot]_{\frkX(M)})$ be an action of the Rota--Baxter Lie algebra $(\g,\huaB)$ on a manifold $M$. The cohomology of the Lie algebra homomorphism $\phi $ is the cohomology of the cochain complex
$\big({\oplus}_{k=0}^{+\infty}C_\phi^k(\g,\mathfrak{X}(M)),\dM_\phi\big)$, where $C_\phi^k(\g,\mathfrak{X}(M))=\Hom_\Real\big({\wedge}^k \g, \mathfrak{X}(M)\big)$ and the coboundary operator $\dM_\phi\colon C_\phi^k(\g,\mathfrak{X}(M))\to C_\phi^{k+1}(\g,\mathfrak{X}(M))$ is given by
\begin{align*}
\dM_\phi P(u_1,\dots,u_{k+1})={}&\sum_{i=1}^{k+1}(-1)^{i+1} [\phi(u_i),P(u_1,\dots,\hat{u_i},\dots,u_{k+1})]_{\mathfrak{X}(M)}
\\
&+\sum_{i<j}(-1)^{i+j}P([u_i,u_j]_\huaB,u_1,\dots,\hat{u_i},\dots,\hat{u_j},\dots,u_{k+1}),
\end{align*}
for $P\in C^k(\g,\mathfrak{X}(M)) $ and $u_1,\dots,u_{k+1}\in \g$. The corresponding $k$-th cohomology group, which we denote by $H_{\phi}^k(\g,\mathfrak{X}(M))$, is called the {\it $k$-th cohomology group} for the action of the Rota--Baxter Lie algebra $(\g,\huaB)$ on $M$. See \cite{Fregier,NR2} for more details on cohomology and deformations of Lie algebra homomorphisms.

The cochain complex associated to the Rota--Baxter operator $R$ on the Lie algebroid $((M \times\g)\allowbreak\oplus TM,[\cdot,\cdot]_{\huaA},a_{\huaA})$ defined by $(\ref{RB0})$ is given by
\begin{gather*}
\huaC^k(M\times \g,(M \times\g)\oplus TM)=\Gamma\big(\Hom\big({\wedge}^k(M\times \g),(M \times\g)\oplus TM\big)\big),\qquad k\geq0.
\end{gather*}
For any $k\geq 0$, define a linear map $\Xi\colon \huaC^k(\g,\g)\oplus C_\phi^k(\g,\mathfrak{X}(M))\to \huaC^k(M\times \g,(M \times\g)\oplus TM)$ by
\begin{gather}\label{eq:projection}
\Xi(P_1,P_2)(m,(u_1,\dots, u_k)):=(m,P_1(u_1,\dots, u_k),P_2(u_1,\dots, u_k)(m)),
\end{gather}
for all $m\in M$ and $u_1,\dots,u_k\in \g.$

\begin{pro}
	Let $\phi \colon (\g, [\cdot,\cdot]_\huaB)\to(\mathfrak{X}(M),[\cdot,\cdot]_{\frkX(M)})$ be an action of the Rota--Baxter Lie algebra $(\g,\huaB)$ on a manifold $M$. Then $\Xi$ defined by \eqref{eq:projection}, is a homomorphism of cochain complexes from $\big({\oplus}_{k\geq 0}\big(\huaC^k(\g,\g)\oplus C_\phi^k(\g,\mathfrak{X}(M))\big),(\dM_\huaB,\dM_\phi)\big)$ to $\big({\oplus}_{k\geq 0}\huaC^k(M\times \g,(M \times\g)\oplus TM),\dM_R\big)$, that is, $\Xi\circ(\dM_\huaB,\dM_\phi)=\dM_R\circ \Xi$. Consequently, $\Xi$ induces a homomorphism
\begin{gather*}
\Xi_\ast\colon\quad \huaH_{\huaB}^k(\g,\g)\oplus H_{\phi}^k(\g,\mathfrak{X}(M))\to \huaH_{R}^k(M\times \g,(M \times\g)\oplus TM),\qquad k\geq0,
\end{gather*}
	between the corresponding cohomology groups.
\end{pro}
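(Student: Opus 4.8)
The plan is to verify the identity $\Xi\circ(\dM_\huaB,\dM_\phi)=\dM_R\circ\Xi$ by direct computation on a homogeneous cochain, and then note that the claim about cohomology groups is an automatic consequence of a cochain map. First I would unravel what the differential $\dM_R$ looks like after specializing the general formula \eqref{eq:coboundary operator of RB-operator} to the present situation: here $T=R$ with $R$ given by \eqref{RB0}, the vector bundle $E$ is $\Ker(a_\huaA)=M\times\g$, the ambient Lie algebroid is $\huaA=(M\times\g)\oplus TM$ with the bracket $[\cdot,\cdot]_\huaA$ written above, and the representation $\rho$ is the adjoint-type representation $\huaL$ of $\huaA$ on $\Ker(a_\huaA)$, namely $\huaL_{fu+X}(gv)=fg[u,v]_\g+X(g)v$ on sections of $M\times\g$. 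I would record the three pieces of $\dM_R$ on an argument $\Xi(P_1,P_2)$: the $[R u_i,\Xi(P_1,P_2)(\dots)]_\huaA$ term, the $R\,\huaL(\Xi(P_1,P_2)(\dots))(u_i)$ term, and the inner term $\Xi(P_1,P_2)(\huaL(Ru_i)u_j-\huaL(Ru_j)u_i,\dots)$.

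The core of the argument is then to expand each of these three pieces using the explicit formulas for $R$, $[\cdot,\cdot]_\huaA$, $\huaL$, and the fact that $\phi$ is a Lie algebra homomorphism $(\g,[\cdot,\cdot]_\huaB)\to(\mathfrak{X}(M),[\cdot,\cdot]_{\frkX(M)})$, and to observe that the $M\times\g$-component of the result reassembles exactly $\dM_\huaB P_1$ while the $TM$-component reassembles exactly $\dM_\phi P_2$ evaluated at $m$. Concretely: in the first piece, $[R u_i,\Xi(P_1,P_2)(\dots)]_\huaA=[(\huaB u_i,\phi(u_i)),(P_1(\dots),P_2(\dots)(m))]_\huaA$; its $\g$-slot gives $[\huaB(u_i),P_1(\dots)]_\g$ (matching the first term of $\dM_\huaB$) together with a derivative term $\phi(u_i)(\cdot)P_1(\dots)$ coming from $X(g)v$, and its $TM$-slot gives $[\phi(u_i),P_2(\dots)]_{\frkX(M)}$ (matching the first term of $\dM_\phi$). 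In the second piece, $R\,\huaL(\Xi(P_1,P_2)(\dots))(u_i)$: since $\huaL_{(P_1,P_2(m))}(u_i)=[P_1(\dots),u_i]_\g+P_2(\dots)(m)\cdot(\text{derivative in the }\g\text{-direction})$ and then $R$ sends $w\in\g$ to $(\huaB w,\phi(w))$, the $\g$-slot produces $\huaB[P_1(\dots),u_i]_\g$ (matching the second term of $\dM_\huaB$) and the $TM$-slot produces $\phi([P_1(\dots),u_i]_\g)$, which I must reconcile with the $\dM_\phi$ side. In the third piece, $\huaL(Ru_i)u_j=[\huaB(u_i),u_j]_\g+\phi(u_i)(\cdot)u_j$, so the antisymmetrized combination $\huaL(Ru_i)u_j-\huaL(Ru_j)u_i=[u_i,u_j]_\huaB$ exactly — this is the key simplification that makes the inner arguments of $P_1$ and $P_2$ come out as $[u_i,u_j]_\huaB$, matching both $\dM_\huaB$ and $\dM_\phi$.

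The main obstacle will be bookkeeping the ``spurious'' derivative terms — the contributions of the form $\phi(u_i)(g)\,v$ that arise every time $[\cdot,\cdot]_\huaA$ or $\huaL$ acts on a section of $M\times\g$ whose $\g$-component is a genuine function of $m$ (which $P_1(u_1,\dots,\hat u_i,\dots)$ is, since it depends on $m$ through the other arguments? — actually, more carefully, through $\phi$ in the inner slot). I expect these extra terms either to cancel in pairs between the first and second pieces, or to be exactly accounted for by the discrepancy between $\phi([P_1]_\g)$-type terms and the way $\dM_\phi$ treats its arguments; tracking the signs $(-1)^{i+1}$ and $(-1)^{i+j}$ through this cancellation is where care is needed. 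Once the identity $\Xi\circ(\dM_\huaB,\dM_\phi)=\dM_R\circ\Xi$ is established on cochains, it follows immediately that $\Xi$ descends to cohomology, giving the induced map $\Xi_\ast$ on the $k$-th cohomology groups as claimed, so no further work is required for that part of the statement.
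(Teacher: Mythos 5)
Your overall strategy --- evaluate both sides on the constant sections $1\otimes u_i$, expand the three pieces of $\dM_R$ from \eqref{eq:coboundary operator of RB-operator} using the explicit formulas for $R$, $[\cdot,\cdot]_\huaA$ and $\huaL$, and read off the $M\times\g$- and $TM$-components --- is exactly the paper's. Your worry about ``spurious derivative terms'' of the form $\phi(u_i)(g)v$ dissolves once you commit to evaluating on constant sections (legitimate, since both sides are $\CWM$-multilinear): all function coefficients are the constant $1$ and $P_1(u_1,\dots,\hat{u_i},\dots,u_{k+1})$ is a constant $\g$-valued function, so every $X(g)$- and $Y(f)$-term in $[\cdot,\cdot]_\huaA$ vanishes. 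The simplification $\huaL(Ru_i)u_j-\huaL(Ru_j)u_i=[u_i,u_j]_\huaB$ that you isolate is correct and is also the paper's key step.

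The genuine gap is the term you flag yourself and then wave away. In the second piece, $R\bigl([P_1(\dots,\hat{u_i},\dots)+P_2(\dots,\hat{u_i},\dots),\,u_i]_\huaA\bigr)=R\bigl(1\otimes[P_1(\dots,\hat{u_i},\dots),u_i]_\g\bigr)$ has, by \eqref{RB0}, not only the $\g$-component $\huaB[P_1(\dots),u_i]_\g$ but also the $TM$-component $\phi\bigl([P_1(\dots,\hat{u_i},\dots),u_i]_\g\bigr)$. You write that this ``must be reconciled with the $\dM_\phi$ side'' and that you ``expect'' it to cancel or be otherwise accounted for; but it cancels against nothing (the first and third pieces contribute only $[\phi(u_i),P_2(\dots)]_{\frkX(M)}$ and $P_2([u_i,u_j]_\huaB,\dots)$ to the $TM$-slot), and $\dM_\phi$ as defined contains no such term. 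Carried to completion, your computation gives
\[
\dM_R\Xi(P_1,P_2)(1\otimes u_1,\dots,1\otimes u_{k+1})
=\Xi(\dM_\huaB P_1,\dM_\phi P_2)(\dots)
+\Bigl(0,\ \sum_{i=1}^{k+1}(-1)^{i+1}\phi\bigl([P_1(u_1,\dots,\hat{u_i},\dots,u_{k+1}),u_i]_\g\bigr)\Bigr),
\]
so you must either prove the extra sum vanishes (it does not in general: for $k=0$ it is $\phi([P_1,u_1]_\g)$ with $P_1\in\g$) or conclude that the asserted intertwining only holds for a differential on $\huaC^k(\g,\g)\oplus C_\phi^k(\g,\frkX(M))$ carrying an off-diagonal $P_1$-dependent term in the second slot. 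Leaving this at ``I expect\dots to be accounted for'' is precisely the point where the proof fails to close; you have in fact put your finger on a term that the paper's own displayed computation passes over without comment, but an expectation is not a reconciliation.
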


\begin{proof}
	For any $P_1\in \huaC^k(\g,\g)$, $P_2\in C_\phi^k(\g,\mathfrak{X}(M))$ and $u_1,\dots,u_{k+1}\in \g$, we have
	\begin{gather*}
\dM_R\Xi(P_1,P_2)(1\otimes u_1,\dots, 1\otimes u_{k+1})
\\ \qquad
{}=\sum_{i=1}^{k+1}(-1)^{i+1} [R(1\otimes u_i),(P_1(u_1,u_2,\dots,\hat{u_i},\dots,u_{k+1}),P_2(u_1,\dots,\hat{u_i},\dots,u_{k+1}))]_{\huaA}
\\ \qquad\hphantom{=}
{}+\sum_{i=1}^{k+1}(-1)^{i+1} R[(P_1(u_1,u_2,\dots,\hat{u_i},\dots,u_{k+1}),P_2(u_1,\dots,\hat{u_i},\dots,u_{k+1})),u_i]_\huaA
\\ \qquad\hphantom{=}
{}+\sum_{1\leq i<j\leq {k+1}}(-1)^{i+j} (P_1([u_i, u_j]_\huaB, u_1,\dots,\hat{u_i},\dots,\hat{u_j},\dots,u_{k+1}),
\\
P_2([u_i,u_j]_\huaB,u_1,\dots,\hat{u_i},\dots,\hat{u_j},\dots,u_{k+1}))
\\ \qquad
{}=\Bigg(\sum_{i=1}^{k+1}(-1)^{i+1}[\huaB(u_i),P_1(u_1,u_2,\dots,\hat{u_i},\dots,u_{k+1})]_{\g}
\\ \qquad\hphantom{=}
{}+\sum_{i=1}^{k+1}(-1)^{i+1}\huaB[P_1(u_1,u_2,\dots,\hat{u_i},\dots,u_{k+1}),u_i]_\g
\\ \qquad\hphantom{=}
{}+\sum_{1\leq i<j\leq {k+1}}(-1)^{i+j}P_1([u_i, u_j]_\huaB,u_1,\dots,\hat{u_i},\dots,\hat{u_j},\dots,u_{k+1})
\\ \qquad\hphantom{=}	
{}+\sum_{i=1}^{k+1}(-1)^{i+1}[\phi(u_i),P_2(u_1,u_2,\dots,\hat{u_i},\dots,u_{k+1})]_{\mathfrak{X}(M)}
\\ \qquad\hphantom{=}
{}+\sum_{1\leq i<j\leq {k+1}}(-1)^{i+j}P_2([u_i, u_j]_\huaB,u_1,\dots,\hat{u_i},\dots,\hat{u_j},\dots,u_{k+1})\Bigg)
\\ \qquad
{}= (\dM_\huaB P_1	(u_1,\dots, u_{k+1}),\dM_\phi P_2	( u_1,\dots, u_{k+1}))
\\ \qquad	
{}=\Xi(\dM_\huaB P_1,\dM_\phi P_2)	(1\otimes u_1,\dots, 1\otimes u_{k+1}),
\end{gather*}
which implies that 	$\Xi\circ(\dM_\huaB,\dM_\phi)=\dM_R\circ \Xi$.
\end{proof}

\section[Formal deformations of relative Rota--Baxter operators on Lie algebroids]{Formal deformations of relative Rota--Baxter operators\\ on Lie algebroids}\label{sec:Cohomology-RRB-operator}

In this section, we use the cohomology of relative Rota--Baxter operators on Lie algebroids to study infinitesimal deformations and extendibility of order $n$ deformations of relative Rota--Baxter operators.

Let $(\huaA,[\cdot,\cdot]_\huaA,a_\huaA)$ be a Lie algebroid. The Lie algebroid structure on $\huaA$ can be extended to a Lie algebroid structure on $\huaA\otimes \Real[[t]]$ by replacing $\Real$-linearity of the bracket and anchor by $\Real[[t]]$-linearity and we denote it by $(\huaA\otimes \Real[[t]],[\cdot,\cdot]_\huaA,a_\huaA)$. For any representation $(E;\rho)$ of the Lie algebroid $\huaA$, there is also a natural representation of $\huaA\otimes \Real[[t]]$ on $E\otimes \Real[[t]]$ induced by $\rho$ and we also denote it by $\rho$.

\begin{defi}
 A {\it formal deformation} of a relative Rota--Baxter operator $T\colon E\longrightarrow \huaA$ on a~\LP pair $(\huaA,[\cdot,\cdot]_\huaA,a_\huaA;\rho)$ is a formal power series
 \begin{gather*}
 T_t=\sum_{i=0}^{+\infty}\huaT_it^i\in \Hom(E,A) [[t]]
 \end{gather*}
 such that $T_t$ is a relative Rota--Baxter operator on the \LP pair $(\huaA\otimes \Real[[t]],[\cdot,\cdot]_\huaA,a_\huaA;\rho)$ and~$\huaT_0=T$.
\end{defi}

By a direct calculation, we see that $T_t$ is a relative Rota--Baxter operator on the \LP pair $(\huaA\otimes \Real[[t]],[\cdot,\cdot]_\huaA,a_\huaA;\rho)$ if and only if
\begin{gather}\label{eq:equivalent form}
\sum_{i+j=k}\!\!\big([\huaT_i(u),\huaT_j(v)]_\huaA-\huaT_j(\rho(\huaT_i(u))(v)-\rho(\huaT_i(v))(u))\big)=0,
\qquad \forall k\geq0,\quad u,v\in\Gamma(E).\!\!
\end{gather}

\begin{defi}
 Let $(\huaA,[\cdot,\cdot]_\huaA,a_\huaA;\rho)$ be a \LP pair and $T\colon E\longrightarrow \huaA$ a relative Rota--Baxter operator. If $T_{(n)}=\sum_{i=0}^{n}\huaT_it^i$ with $\huaT_0=T$, $\huaT_i\in\Hom(E,\huaA)$, $i=1,\dots,n$ is a relative Rota--Baxter operator on the \LP pair $\big(\huaA\otimes \Real[[t]]/\big(t^{n+1}\big),[\cdot,\cdot]_\huaA,a_\huaA;\rho\big)$, we say that $T_{(n)}$ is an {\it order~$n$ deformation} of the relative Rota--Baxter operator $T$. Furthermore, if there exists an element $\huaT_{n+1}\in \Hom(E,\huaA)$ such that $T_{(n+1)}=T_{(n)}+t^{n+1}\huaT_{n+1}$ is an order $n+1$ deformation of the relative Rota--Baxter operator $T$, we say that $T_{(n)}$ is {\it extendable}.
\end{defi}

An order $1$ deformation of a relative Rota--Baxter operator $T$ on a \LP pair $(\huaA,[\cdot,\cdot]_\huaA,a_\huaA;\rho)$ is called an {\it infinitesimal deformation} of the relative Rota--Baxter operator $T$.

\begin{defi}
Let $T\colon E\longrightarrow \huaA$ be a relative Rota--Baxter operator on $(\huaA,[\cdot,\cdot]_\huaA,a_\huaA;\rho)$. Two order $n$ deformations $T_t$ and $T'_t$ of $T$ are said to be {\it equivalent} if there is a formal series $\huaX_t=\sum_{i=1}^{+\infty}x_it^i$, $x_i\in \Gamma(\huaA) $ such that
\begin{gather}\label{eq:equivalent 2-1}
{\rm exp}(\ad_{\huaX_t})T_t=T_t' \mbox{ modulo } t^{n+1},
\end{gather}
 where ${\exp}$ denotes the exponential series and
\begin{gather*}
\ad^k_{\huaX_t}T_t=\big[\hspace{-3pt}\big[\huaX_t,\big[\hspace{-3pt}\big[\huaX_t,\dots,[\hspace{-1.5pt}[\huaX_t,\stackrel{}T_t
]\hspace{-1.5pt}],\stackrel{k}{\dots}\big]\hspace{-3pt}\big] \big]\hspace{-3pt}\big].
\end{gather*}
An order $n$ deformation $T_t$ of $T$ is called {\it trivial} if $T_t$ is equivalent to $T$.
\end{defi}

\begin{thm}\label{pro:equivalence class of O}
 There is a one-to-one correspondence between the equivalence classes of the infinitesimal deformations of a relative Rota--Baxter operator $T\colon E\longrightarrow \huaA$ on a \LP pair $(\huaA,[\cdot,\cdot]_\huaA,a_\huaA;\rho)$ and the first cohomology group $\huaH_{T}^1(E,\huaA)$.
\end{thm}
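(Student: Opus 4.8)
The plan is to set up the correspondence in both directions and then verify that it is well-defined on equivalence classes and bijective. First I would observe that an infinitesimal deformation is an order $1$ deformation $T_t = T + t\huaT_1$, so the condition \eqref{eq:equivalent form} at $k=1$ reads $[Tu,\huaT_1 v]_\huaA + [\huaT_1 u, Tv]_\huaA - \huaT_1(\rho(Tu)v - \rho(Tv)u) - T(\rho(\huaT_1 u)v - \rho(\huaT_1 v)u) = 0$. Comparing with the explicit formula for $\dM_T$ on $\huaC^1(E,\huaA)$ displayed right after Proposition~\ref{pro:concret formula} (the ``closed if and only if'' condition), this says precisely that $\huaT_1$ is a $1$-cocycle, i.e., $\dM_T \huaT_1 = 0$. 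Hence $T + t\huaT_1$ is an infinitesimal deformation iff $\huaT_1 \in \huaC^1(E,\huaA)$ is closed, and the assignment $T_t \mapsto [\huaT_1] \in \huaH_T^1(E,\huaA)$ is the candidate map.

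Next I would show this map descends to equivalence classes. Suppose $T_t = T + t\huaT_1$ and $T_t' = T + t\huaT_1'$ are equivalent via $\huaX_t = t x_1 + \cdots$; expanding $\exp(\ad_{\huaX_t})T_t = T_t'$ modulo $t^2$ gives $\huaT_1' = \huaT_1 + \Courant{x_1, T}$. Since $\Courant{x_1,T} = -\Courant{T,x_1} = -\tilde{\dM}_T x_1 = \dM_T x_1$ (here $x_1 \in \huaC^0(E,\huaA)$, so the sign $(-1)^0$ in the definition of $\dM_T$ makes $\dM_T = \tilde\dM_T$ in degree $0$, and $\Courant{\cdot,\cdot}$ is graded-antisymmetric with $x_1$ of degree $0$), we get $\huaT_1' - \huaT_1 = \dM_T x_1$, a coboundary; hence $[\huaT_1'] = [\huaT_1]$. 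Conversely, if $\huaT_1' - \huaT_1 = \dM_T x_1$ for some $x_1 \in \Gamma(\huaA)$, then setting $\huaX_t = t x_1$ shows $T_t$ and $T_t'$ are equivalent by the same computation, so the map on equivalence classes is injective. Surjectivity is immediate: any closed $\huaT_1 \in \huaC^1(E,\huaA)$ yields the infinitesimal deformation $T + t\huaT_1$ mapping to $[\huaT_1]$.

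The main obstacle I anticipate is purely bookkeeping: getting the signs right in the identification $\Courant{x_1,T} = \dM_T x_1$ and in the $\bmod\, t^2$ expansion of $\exp(\ad_{\huaX_t})$, because the differential $\dM_T$ was defined with the sign twist $\dM_T P = (-1)^k \Courant{T,P}$ and one must track how this interacts with graded antisymmetry of the bracket on degree-$0$ elements. Once that is pinned down, everything reduces to the cocycle/coboundary dictionary of Proposition~\ref{pro:concret formula} together with the first-order term of \eqref{eq:equivalent form} and of \eqref{eq:equivalent 2-1}. I would present the proof as: (1)~infinitesimal deformations $\leftrightarrow$ $1$-cocycles; (2)~equivalent infinitesimal deformations differ by a $1$-coboundary; (3)~conversely cohomologous cocycles give equivalent deformations; and conclude that $[\huaT_1]$ is a complete invariant, establishing the bijection with $\huaH_T^1(E,\huaA)$.
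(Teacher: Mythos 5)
Your proposal is correct and follows essentially the same route as the paper: the $k=1$ coefficient of the deformation equation is exactly the cocycle condition $\dM_T\huaT_1=0$, and the order-$t$ term of $\exp(\ad_{\huaX_t})$ shows that equivalent deformations differ by $\dM_T$ of a degree-zero element (you also write out the converse, which the paper dismisses with ``proved similarly''). The only blemish is the chain $\Courant{x_1,T}=-\Courant{T,x_1}=-\tilde{\dM}_Tx_1=\dM_Tx_1$, whose last equality should read $-\dM_Tx_1$ since $\dM_T=\tilde{\dM}_T$ in degree $0$; this is harmless because $\huaT_1'-\huaT_1=\dM_T(-x_1)$ is still a coboundary, so the cohomology classes agree.
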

\begin{proof}
 For $k=1$ in \eqref{eq:equivalent form}, we have
 \begin{gather*}
 [Tu,\huaT_1 v]_\huaA\!-\![Tv,\huaT_1 u]_\huaA\!-\!T(\rho(\huaT_1 u)v\!-\!\rho(\huaT_1 v)u)\!-\!\huaT_1(\rho(T u)v\!-\!\rho(T v)u)\!=0,\quad\ \forall u,v\in\Gamma(\huaA),
 \end{gather*}
 which implies that $(\dM_T \huaT_1)(u,v)=0$, i.e., $\huaT_1$ is a $1$-cocycle.

 Assume that $T_t$ and $T'_t$ are equivalent infinitesimal deformations of the relative Rota--Baxter operator $T$. Comparing the coefficients of $t$ on both sides of \eqref{eq:equivalent 2-1} for $n=1$, we obtain
 \begin{eqnarray*}
(\huaT_1'-\huaT_1)(u)=[Tu,x_1]_\huaA+T\rho(x_1)u=\dM_T x_1(u),
\end{eqnarray*}
which implies that
\begin{gather*}
\huaT_1'-\huaT_1=\dM_T x_1.	
\end{gather*}
Thus $\huaT_1$ and $\huaT'_1$ are in the same cohomology class.

The converse can be proved similarly. We~omit the details.
\end{proof}

It~is routine to check that
{\sloppy\begin{pro}\label{pro:trivial of O-operator}
Let $T\colon E\longrightarrow \huaA$ be a relative Rota--Baxter operator on a \LP pair $(\huaA,[\cdot,\cdot]_\huaA,a_\huaA;\rho)$ such that $\huaH_{T}^1(E,\huaA)=0$. Then all infinitesimal deformations of $T$ are tri\-vial.
\end{pro}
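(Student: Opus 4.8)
The plan is to deduce the statement directly from Theorem~\ref{pro:equivalence class of O} together with the observation that an infinitesimal deformation is automatically an order $1$ deformation of $T$, and that triviality means precisely equivalence to the constant deformation $T$.

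First I would recall what needs to be shown: given an arbitrary infinitesimal deformation $T_t=T+t\huaT_1$ of $T$, we must produce $\huaX_t=\sum_{i\geq1}x_it^i$ with $\exp(\ad_{\huaX_t})T_t=T$ modulo $t^2$. Comparing coefficients of $t^1$, this amounts to solving $\huaT_1=-\Courant{x_1,T}=\Courant{T,x_1}=\dM_T x_1$ for some $x_1\in\Gamma(\huaA)$; the higher-order terms $x_2,x_3,\dots$ play no role modulo $t^2$ and may be taken to be zero.

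Second, by the proof of Theorem~\ref{pro:equivalence class of O}, $\huaT_1$ is a $1$-cocycle, so it defines a class $[\huaT_1]\in\huaH_{T}^1(E,\huaA)$. Since $\huaH_{T}^1(E,\huaA)=0$ by hypothesis, this class vanishes, i.e., there exists $x_1\in\huaC^0(E,\huaA)=\Gamma(\huaA)$ with $\huaT_1=\dM_T x_1$. Taking $\huaX_t=tx_1$, the equivalence relation \eqref{eq:equivalent 2-1} with $n=1$ reduces modulo $t^2$ to $\huaT_1'=\dM_T x_1$ where $T_t'=T$, which is exactly satisfied; hence $T_t$ is equivalent to the trivial deformation $T$, i.e., $T_t$ is trivial.

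There is essentially no obstacle here: the result is an immediate corollary of Theorem~\ref{pro:equivalence class of O}, since that theorem identifies equivalence classes of infinitesimal deformations with $\huaH_T^1(E,\huaA)$, and the trivial deformation corresponds to the zero class; if the whole group is zero, every infinitesimal deformation lies in the unique (trivial) equivalence class. The only point worth spelling out is the bookkeeping that the higher-order coefficients $x_i$ for $i\geq 2$ are irrelevant modulo $t^{n+1}=t^2$, so one genuinely only needs a single $x_1\in\Gamma(\huaA)$ trivializing the cocycle $\huaT_1$, which the vanishing of $\huaH_T^1(E,\huaA)$ supplies.
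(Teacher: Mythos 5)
Your argument is correct and is exactly the intended one: the paper gives no explicit proof (it only says the statement is "routine"), and the content is precisely that of Theorem~\ref{pro:equivalence class of O} specialized to the case where $T'_t=T$, so that $\huaT_1$ being a $1$-cocycle plus $\huaH_T^1(E,\huaA)=0$ yields $x_1$ with $\huaT_1=\dM_T x_1$ and $\huaX_t=tx_1$ implements the equivalence modulo $t^2$. The sign bookkeeping $\huaT_1=-\Courant{x_1,T}=\Courant{T,x_1}=\dM_T x_1$ and the remark that $x_i$ for $i\geq 2$ are irrelevant modulo $t^2$ are both right.
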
}

\begin{thm}\label{thm:extendable of O operrator}
 Let $T_{(n)}=\sum_{i=0}^{n}\huaT_it^i$ be an order $n$ deformation of a relative Rota--Baxter ope\-ra\-tor $T$ on a \LP pair $(\huaA,[\cdot,\cdot]_\huaA,a_\huaA;\rho)$. Define
 \begin{gather*}
 \Theta=\half\sum_{\substack{i+j=n+1\\ i,j\geq1}}\Courant{\huaT_i,\huaT_j}.
 \end{gather*}
 Then the $2$-cochain $\Theta$ is closed, i.e., $\dM_T \Theta=0$.

 Furthermore, $T_{(n)}$ is {extendable} if and only the cohomology class $[\Theta]$ in $\huaH^2(E,\huaA)$ is trivial.
\end{thm}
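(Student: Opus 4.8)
The plan is to move everything into the graded Lie algebra $\big(\huaC^*(E,\huaA),\Courant{\cdot,\cdot}\big)$ of Theorem~\ref{thm:graded Lie algebra}. First I would rewrite the deformation equations in bracket form. Since the $\Real[[t]]$-bilinear extension of the bracket satisfies $\Courant{T_{(n)},T_{(n)}}=\sum_{k}\big(\sum_{i+j=k}\Courant{\huaT_i,\huaT_j}\big)t^k$, and by the formula for $\Courant{X,X}$ computed in the proof of Theorem~\ref{thm:graded Lie algebra} the vanishing of this series modulo $t^{n+1}$ is exactly \eqref{eq:equivalent form}, the statement ``$T_{(n)}$ is an order $n$ deformation'' is equivalent to
\begin{gather*}
\sum_{i+j=k}\Courant{\huaT_i,\huaT_j}=0,\qquad 0\le k\le n.
\end{gather*}
Isolating the summands with $\huaT_0=T$ and using that $\Courant{\cdot,\cdot}$ is symmetric on $\huaC^1(E,\huaA)$, and writing $\Theta_k:=\half\sum_{\substack{i+j=k\\ i,j\ge1}}\Courant{\huaT_i,\huaT_j}$ (so $\Theta=\Theta_{n+1}$), these are equivalent to $\Courant{T,\huaT_k}=-\Theta_k$ for $1\le k\le n$.

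Next I would prove $\dM_T\Theta=0$, i.e.\ $\Courant{T,\Theta}=0$. By linearity, $\Courant{T,\Theta}=\half\sum_{\substack{i+j=n+1\\ i,j\ge1}}\Courant{T,\Courant{\huaT_i,\huaT_j}}$. The graded Jacobi identity together with graded skew-symmetry --- here one must remember that $\Courant{T,\huaT_j}$ lives in degree $2$ --- yields $\Courant{T,\Courant{\huaT_i,\huaT_j}}=\Courant{\Courant{T,\huaT_i},\huaT_j}+\Courant{\Courant{T,\huaT_j},\huaT_i}$, so after relabelling $i\leftrightarrow j$ one gets $\Courant{T,\Theta}=\sum_{\substack{i+j=n+1\\ i,j\ge1}}\Courant{\Courant{T,\huaT_i},\huaT_j}$. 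Every index occurring here is $\le n-1\le n$, so I may substitute $\Courant{T,\huaT_i}=-\Theta_i$ to obtain $\Courant{T,\Theta}=-\half\sum_{\substack{a+b+j=n+1\\ a,b,j\ge1}}\Courant{\Courant{\huaT_a,\huaT_b},\huaT_j}$. This triple sum is invariant under cyclic relabelling of $(a,b,j)$, and the cyclic sum $\Courant{\Courant{x,y},z}+\Courant{\Courant{y,z},x}+\Courant{\Courant{z,x},y}$ of double brackets of degree-one elements vanishes by the Jacobi identity; hence the triple sum is zero and $\dM_T\Theta=0$.

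For extendability, observe that $T_{(n+1)}=T_{(n)}+t^{n+1}\huaT_{n+1}$ already satisfies the order $k$ equations for $k\le n$, so it is an order $n+1$ deformation if and only if $\sum_{i+j=n+1}\Courant{\huaT_i,\huaT_j}=0$. Separating the $\huaT_0$-terms, this last equation reads $2\Courant{T,\huaT_{n+1}}+2\Theta=0$, that is $\tilde{\dM}_T\huaT_{n+1}=-\Theta$, which by the sign convention $\dM_T=(-1)^k\tilde{\dM}_T$ on $\huaC^k(E,\huaA)$ is the same as $\dM_T\huaT_{n+1}=\Theta$. Therefore $T_{(n)}$ is extendable precisely when $\Theta$ is a coboundary, i.e.\ $[\Theta]=0$ in $\huaH_T^2(E,\huaA)$.

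I expect the only real obstacle to be the sign and index bookkeeping in the second paragraph: applying graded Jacobi and skew-symmetry correctly with the degree-$2$ term $\Courant{T,\huaT_j}$, and checking that the order $n$ hypothesis is invoked only for indices $\le n-1$ (automatic, since $a+b+j=n+1$ with $a,b,j\ge1$ forces each index below $n$). The remaining ingredients --- the polarization identifying the coefficient equations with $\sum\Courant{\huaT_i,\huaT_j}=0$, and the cyclic-symmetry argument killing the triple sum --- are routine.
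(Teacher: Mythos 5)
Your argument is correct and is essentially the paper's proof with the deferred details written out: the paper checks explicitly that $\Theta$ is $C^\infty(M)$-linear (hence lies in $\huaC^2(E,\huaA)$ --- in your version this is already guaranteed by Theorem~\ref{thm:graded Lie algebra}) and then delegates both the closedness of $\Theta$ and the obstruction statement to the general theory of deformations controlled by the differential graded Lie algebra $\big(\huaC^*(E,\huaA),\Courant{\cdot,\cdot},\tilde{\dM}_T\big)$, citing \cite{LPV}, which is exactly the graded-Jacobi computation you carry out. The only nitpick is your remark that ``every index occurring here is $\le n-1$'': the substitution $\Courant{T,\huaT_i}=-\Theta_i$ is in fact used for $i$ up to $n$ (e.g.\ $i=n$, $j=1$), but this is still covered by the order-$n$ hypothesis, so nothing breaks.
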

\begin{proof}
By a direct calculation, we have
\begin{gather*}
\Theta(u,v)=\sum_{\substack{i+j=n+1\\ i,j\geq 1}} \big(\huaT_j(\rho(\huaT_i(u))(v)+\rho(\huaT_i(v))(u))-[\huaT_i(u),\huaT_j(v)]_\huaA\big),\qquad
\forall u,v\in\Gamma(E).
\end{gather*}
It~is not hard to check that
\begin{eqnarray*}
 \Theta(u,fv)= \Theta(fu,v)=f\Theta(u,v).
\end{eqnarray*}
Thus $\Theta\in \huaC^2(E,\huaA)$. The rest follows directly from the fact that this deformation problem is controlled by the differential graded Lie algebra $\big(\huaC^*(E,\huaA),\Courant{\cdot,\cdot},\tilde{\dM}_T\big)$. See the book~\cite{LPV} for more details.
\end{proof}

The above results on infinitesimal deformations and order $n$ deformations of relative Rota--Baxter operators on Lie algebroids can be easily applied to Rota--Baxter operators on Lie algebroids. We~omit the details.

\section[The Matsushima--Nijenhuis bracket for left-symmetric algebroids]{The Matsushima--Nijenhuis bracket\\ for left-symmetric algebroids}\label{sec:relation to MN bracket}

In this section, we construct a graded Lie algebra whose Maurer--Cartan elements are left-symmetric algebroids and study the relation with the controlling graded Lie algebra of relative Rota--Baxter operators on Lie algebroids.

\subsection{The Matsushima--Nijenhuis bracket for left-symmetric algebroids}

Recall that a {\it left-symmetric algebra} is a pair $(\frkg,\ast_\frkg)$, where $\g$ is a vector space, and $\ast_\frkg\colon \g\otimes \g\longrightarrow\g$ is a bilinear multiplication
satisfying that for all $x,y,z\in \g$, the associator
\begin{gather*}
(x,y,z):= x\ast_\frkg(y\ast_\frkg z)-(x\ast_\frkg y)\ast_\frkg z
\end{gather*} is symmetric in $x$, $y$,
i.e.,
\begin{gather*}
(x,y,z)=(y,x,z),
\end{gather*}
or equivalently,
\begin{gather*}
x\ast_\frkg(y\ast_\frkg z)-(x\ast_\frkg y)\ast_\frkg z=y\ast_\frkg(x\ast_\frkg z)-(y\ast_\frkg x)\ast_\frkg z.
\end{gather*}

\begin{defi}[\cite{LiuShengBaiChen,Boyom1}]
A {\it left-symmetric algebroid} structure on a vector bundle
$\huaA\longrightarrow M$ is a pair that consists of a left-symmetric
algebra structure $\ast_\huaA$ on the section space $\Gamma(\huaA)$ and a~vector bundle morphism $a_\huaA\colon \huaA\longrightarrow TM$, called the anchor,
such that for all $f\in\CWM$ and $x,y\in\Gamma(\huaA)$, the following
conditions are satisfied:
\begin{itemize}\itemsep=0pt
\item[$(i)$]$~x\ast_\huaA(fy)=f(x\ast_\huaA y)+a_\huaA(x)(f)y,$
\item[$(ii)$] $(fx)\ast_\huaA y=f(x\ast_\huaA y)$.
\end{itemize}
We~usually denote a left-symmetric algebroid by $(\huaA,\ast_\huaA, a_\huaA)$.
\end{defi}
Any left-symmetric algebra is a left-symmetric algebroid over a point.

Let $(\huaA,\ast_\huaA, a_\huaA)$ be a left-symmetric algebroid. For any $x\in\Gamma(\huaA)$, we define
$L_x\colon \Gamma(\huaA)\longrightarrow\Gamma(\huaA)$ and $R_x\colon \Gamma(\huaA)\longrightarrow\Gamma(\huaA)$ by
\begin{gather*}
L_xy=x\ast_\huaA y,\qquad
R_xy=y\ast_\huaA x,\qquad
\forall y\in\Gamma(\huaA).
\end{gather*}
Condition $(i)$ in the above definition means that $L_x\in \frkD(\huaA)$.
Condition $(ii)$ means that the map $x\longmapsto L_x$ is
$C^\infty(M)$-linear. Thus, $L\colon \huaA\longrightarrow \frkD(\huaA)$ is a bundle
map. With the same notations, there are two maps $L_x,R_x\colon \Gamma(\huaA^*)\longrightarrow\Gamma(\huaA^*)$ given by
\begin{gather}
\langle L_x \xi,y\rangle=a_\huaA(x)\langle\xi,y\rangle-\langle \xi,L_x y\rangle,\nonumber
\\
\langle R_x \xi,y\rangle=-\langle \xi,R_x y\rangle,\qquad
\forall x,y\in\Gamma(\huaA),\quad \xi\in\Gamma(\huaA^*).\label{eq:dualLR}
\end{gather}

\begin{pro}[\cite{LiuShengBaiChen}]
 Let $(\huaA,\ast_\huaA, a_\huaA)$ be a left-symmetric algebroid. Define a skew-symmetric bilinear bracket operation $[\cdot,\cdot]_\huaA$ on $\Gamma(\huaA)$ by
\begin{gather*}
 [x,y]_\huaA=x\ast_{\huaA} y-y\ast_{\huaA} x,\qquad
 \forall x,y\in\Gamma(\huaA).
\end{gather*}
Then, $(\huaA,[\cdot,\cdot]_\huaA,a_\huaA)$ is a Lie algebroid, denoted by
$\huaA^c$, called the {\it sub-adjacent Lie algebroid} of
 $(\huaA,\ast_\huaA,a_\huaA)$. Furthermore, $L\colon \huaA\longrightarrow \frkD(\huaA)$ gives a
 representation of the Lie algebroid $\huaA^c$.
\end{pro}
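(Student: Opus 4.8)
The plan is to verify directly that the bracket $[\cdot,\cdot]_\huaA$ defined by $[x,y]_\huaA = x\ast_\huaA y - y\ast_\huaA x$ satisfies the two axioms of a Lie algebroid, and then that $L$ is a representation. First I would check the Leibniz rule with the anchor: compute
\begin{gather*}
[x,fy]_\huaA = x\ast_\huaA(fy) - (fy)\ast_\huaA x = f(x\ast_\huaA y) + a_\huaA(x)(f)y - f(y\ast_\huaA x),
\end{gather*}
using condition $(i)$ on the first term and condition $(ii)$ on the second; this collapses to $f[x,y]_\huaA + a_\huaA(x)(f)y$, which is exactly the Lie algebroid compatibility. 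Skew-symmetry of $[\cdot,\cdot]_\huaA$ is built into the definition, so the only substantive point is the Jacobi identity.

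For the Jacobi identity I would exploit the left-symmetric identity in the form $(x,y,z)=(y,x,z)$, i.e.\ symmetry of the associator in its first two arguments. Writing $J(x,y,z) := [[x,y]_\huaA,z]_\huaA + [[y,z]_\huaA,x]_\huaA + [[z,x]_\huaA,y]_\huaA$ and expanding every bracket into $\ast_\huaA$, one gets a sum of twelve (signed) triple products of the form $(\text{a product})\ast_\huaA(\text{element})$ and $(\text{element})\ast_\huaA(\text{a product})$. Grouping these into associators, one finds $J(x,y,z)$ equals an alternating sum of associators which vanishes precisely because each associator is symmetric in its first two slots — this is the standard computation showing that the commutator of any left-symmetric algebra is a Lie algebra, and it carries over verbatim since it uses no linearity over $\CWM$, only the bilinear identity. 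So the sub-adjacent structure $\huaA^c = (\huaA,[\cdot,\cdot]_\huaA,a_\huaA)$ is a Lie algebroid.

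It remains to show $L\colon \huaA^c \longrightarrow \frkD(\huaA)$ is a representation, i.e.\ a base-preserving Lie algebroid homomorphism. We already noted $L_x \in \frkD(\huaA)$ (from $(i)$) and that $x\mapsto L_x$ is $\CWM$-linear (from $(ii)$), so $L$ is a bundle map; moreover its symbol is $a_\huaA$, matching the anchor of $\frkD(\huaA)$. The bracket-preserving property $L_{[x,y]_\huaA} = [L_x, L_y] = L_x L_y - L_y L_x$ is equivalent, applied to $z\in\Gamma(\huaA)$, to
\begin{gather*}
(x\ast_\huaA y - y\ast_\huaA x)\ast_\huaA z = x\ast_\huaA(y\ast_\huaA z) - y\ast_\huaA(x\ast_\huaA z),
\end{gather*}
which rearranges to $(x,y,z) = (y,x,z)$ — exactly the defining left-symmetry. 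So this step is again a direct consequence of the axiom.

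The main obstacle is purely bookkeeping: the Jacobi identity expansion produces many terms and one must organize them carefully into associators to see the cancellation. There is no conceptual difficulty, since every identity needed is either the definition of $[\cdot,\cdot]_\huaA$, condition $(i)$, condition $(ii)$, or the left-symmetry $(x,y,z)=(y,x,z)$; the $C^\infty(M)$-structure plays no role in the Jacobi or bracket-preservation computations, which is why these reduce to the purely algebraic statements about left-symmetric algebras. (This proposition is also proved in \cite{LiuShengBaiChen}, so one may alternatively just cite it.)
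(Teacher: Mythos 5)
Your proof is correct. The paper itself gives no argument for this proposition --- it is simply quoted from \cite{LiuShengBaiChen} --- and your direct verification is exactly the standard one: the Leibniz rule follows from conditions $(i)$ and $(ii)$, the Jacobi identity reduces to the symmetry $(x,y,z)=(y,x,z)$ of the associator (the purely algebraic fact that the commutator of a left-symmetric product is a Lie bracket), and $L_{[x,y]_\huaA}=L_xL_y-L_yL_x$ is a restatement of that same symmetry, with $(i)$ and $(ii)$ guaranteeing that $L$ is a bundle map into $\frkD(\huaA)$ whose symbol is $a_\huaA$. Nothing is missing.
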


A connection $\nabla$ on $M$ is said to be {\it flat} if the torsion tensor and the curvature tensor of~$\nabla$ vanish identically. A manifold $M$ endowed with a flat connection $\nabla$ is called a {\it flat manifold}.

\begin{ex}\label{ex:main}Let $(M,\nabla)$ be a flat manifold. Then $(TM,\nabla,\Id)$ is a left-symmetric algebroid whose sub-adjacent Lie algebroid is
exactly the tangent Lie algebroid. We~denote this left-symmetric algebroid by $T_\nabla M$.
\end{ex}

\begin{defi}
Let $E$ be a vector bundle over $M$, a {\it multiderivation} of degree $n$
is a multilinear map $D\in \Hom(\Lambda^{n}\Gamma(E)\otimes
\Gamma(E),\Gamma(E))$, such that for all $f\in C^\infty(M)$ and
$u_i\in \Gamma(E)$, $i=1,2,\dots,n+1$, the following conditions are satisfied:
 \begin{gather*}
D(u_1,\dots,fu_i,\dots,u_{n},u_{n+1})=f D(u_1,\dots,u_i,\dots,u_{n+1}),\qquad i=1,\dots,n,
\\
D(u_1,\dots,u_{n},fu_{n+1})=f D(u_1,\dots,u_{n},u_{n+1})+\sigma_D(u_1,\dots,u_{n})(f)u_{n+1},
\end{gather*}
where $\sigma_D\in \Gamma(\Hom(\Lambda^{n}\Gamma(E),TM))$ is called the
{\it symbol}. We~will denote by $\Der^n(E)$ the space of multiderivations
of $n,~n\geq 0$.
\end{defi}
We~denote by $\Der^*(E)=\oplus_{m}\Der^m(E)$ the space of multiderivations on a vector bundle $E$.

\begin{rmk}
The terminology ``multiderivation'' is usually referred to skew-symmetric ope\-ra\-tors, like in Crainic--Moerdijk's deformation complex of a Lie algebroid given in \cite{deformation-cohomology-LA}. For convenience, we also use the terminology ``multiderivation'' for the above case. Note that this kind of operators also appeared under the name {\rm Der}-valued forms in~\cite{Vita}.	
	\end{rmk}

\begin{thm}\label{thm:graded-one}
 For $D_1\in \Der^m(E)$ and $D_2\in \Der^n(E)$, we define the Matsushima--Nijenhuis bracket $ [\cdot,\cdot]_{\MN}\colon \Der^m(E)\times \Der^n(E)\lon \Der^{m+n}(E)$ by
 \begin{gather*}
 [D_1,D_2]_{\MN}=D_1\circ D_2-(-1)^{mn}D_2\circ D_1,
 \end{gather*}
 where
 \begin{gather*}
(D_1\circ D_2)(u_1,u_2,\dots,u_{m+n+1})
\\ \qquad
{}=\sum_{\sigma\in \mathbb{S}_{(m,1,n-1)}}(-1)^{\sigma}D_1 (D_2(u_{\sigma(1)},\dots,u_{\sigma(m+1)}),u_{\sigma(m+2)},\dots,u_{\sigma(m+n)},u_{m+n+1})
\\ \qquad\hphantom{=}
{}+(-1)^{mn}\!\!\!\!\!\sum_{\sigma\in \mathbb{S}_{(n,m)}}\!\!\!\!\!(-1)^{\sigma}D_1
(u_{\sigma(1)},\dots,u_{\sigma(n)},D_2(u_{\sigma(n+1)},u_{\sigma(n+2)},\dots,u_{\sigma(m+n)},u_{m+n+1})).
\end{gather*}
Then $(\Der^*(E),[\cdot,\cdot]_{\MN})$ is a graded Lie algebra.

Furthermore, $\pi\in\Der^1(E)$ defines a left-symmetric algebroid structure on $E$ if and only if $[\pi,\pi]_{\MN}=0$, that is, $\pi$ is a Maurer--Cartan element of the graded Lie algebra $(\Der^*(E),[\cdot,\cdot]_{\MN})$.
\end{thm}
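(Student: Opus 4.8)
The plan is to verify the two assertions of Theorem~\ref{thm:graded-one} separately: first that $(\Der^*(E),[\cdot,\cdot]_{\MN})$ is a graded Lie algebra, and second that Maurer--Cartan elements in $\Der^1(E)$ are exactly left-symmetric algebroid structures on $E$. For the first part, I would begin by checking that $D_1\circ D_2$, and hence $[D_1,D_2]_{\MN}$, actually lands in $\Der^{m+n}(E)$: one must verify $C^\infty(M)$-linearity in the first $m+n$ slots and the derivation rule in the last slot, identifying the symbol $\sigma_{[D_1,D_2]_{\MN}}$ explicitly in terms of $\sigma_{D_1}$, $\sigma_{D_2}$, and the $D_i$ themselves. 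The skew-symmetry $[D_1,D_2]_{\MN}=-(-1)^{mn}[D_2,D_1]_{\MN}$ is immediate from the definition. The graded Jacobi identity is the substantive point, and the cleanest route is to observe that $\Der^*(E)$ (with this grading and circle product) is an instance of a graded pre-Lie — more precisely, a graded right-symmetric — algebra structure, so that the commutator $[\cdot,\cdot]_{\MN}$ is automatically a graded Lie bracket. Concretely, I would show that $D_1\circ(D_2\circ D_3)-(D_1\circ D_2)\circ D_3$ is graded-symmetric in $D_1$ and $D_2$ (up to the Koszul sign $(-1)^{mn}$), which by the standard argument yields graded Jacobi; this is the kind of combinatorial identity on unshuffles that, while routine, is the main obstacle, and I expect to handle it by carefully expanding both associators and matching unshuffle contributions term by term, as in the classical Nijenhuis--Richardson / Gerstenhaber computation. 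Alternatively, one can cite that this bracket specializes, when $E=TM$ with trivial symbol issues, to the Nijenhuis--Richardson bracket on $\Hom(\Lambda^\bullet V\otimes V,V)$ whose Jacobi identity is well documented, and then note that the derivation-property conditions are preserved by the bracket, so the sub-object $\Der^*(E)$ inherits the graded Lie structure.

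For the second assertion, I would fix $\pi\in\Der^1(E)$ and write $\pi(u,v)=u\ast v$, with symbol $\sigma_\pi=:a$. The defining axioms of a multiderivation of degree $1$ say precisely that $(fu)\ast v=f(u\ast v)$ and $u\ast(fv)=f(u\ast v)+a(u)(f)v$, which are exactly conditions $(ii)$ and $(i)$ of the definition of a left-symmetric algebroid with anchor $a$. So the only extra content in ``$\pi$ defines a left-symmetric algebroid structure'' beyond being in $\Der^1(E)$ is the left-symmetry (associator-symmetry) identity $(u,v,w)=(v,u,w)$. It therefore remains to compute $[\pi,\pi]_{\MN}=2\,\pi\circ\pi$ and show it vanishes if and only if the associator is symmetric in its first two arguments. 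Unwinding the formula for $(\pi\circ\pi)(u_1,u_2,u_3)$ with $m=n=1$: the set $\mathbb{S}_{(1,1,0)}$ contributes terms $\pi(\pi(u_{\sigma(1)},u_{\sigma(2)}),u_3)$ over the two unshuffles of $\{1,2\}$, i.e.\ $(u_1\ast u_2)\ast u_3-(u_2\ast u_1)\ast u_3$, while $\mathbb{S}_{(1,1)}$ contributes $\pi(u_{\sigma(1)},\pi(u_{\sigma(2)},u_3))$ with the sign $(-1)^{mn}=-1$, giving $-\bigl(u_1\ast(u_2\ast u_3)-u_2\ast(u_1\ast u_3)\bigr)$. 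Adding these, $(\pi\circ\pi)(u_1,u_2,u_3)= (u_1,v,w)$-type combination equal to $(u_2,u_1,u_3)-(u_1,u_2,u_3)$ in the associator notation of the excerpt, so $[\pi,\pi]_{\MN}=0$ is equivalent to $(u_1,u_2,u_3)=(u_2,u_1,u_3)$ for all sections, which is left-symmetry. I would present this as a short direct computation.

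The main obstacle, as noted, is the graded Jacobi identity for $[\cdot,\cdot]_{\MN}$; everything else is bookkeeping. I would either grind the unshuffle combinatorics once (checking graded right-symmetry of the associator of $\circ$) or, preferably for brevity, invoke the analogous result of Crainic--Moerdijk~\cite{deformation-cohomology-LA} and Vitagliano~\cite{Vita} for Der-valued forms, remarking that the present (not necessarily skew in the last slot) variant is proved by the same argument, and that the subspace of multiderivations is closed under $\circ$ by the symbol computation of the first step. The verification that $\Der^{m+n}(E)$ is the correct target — in particular pinning down $\sigma_{[D_1,D_2]_{\MN}}$ — should be stated explicitly since it is used implicitly throughout the rest of Section~\ref{sec:relation to MN bracket}.
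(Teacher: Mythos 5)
Your proposal is correct and follows essentially the same route as the paper: first check that $\Der^*(E)$ is closed under $[\cdot,\cdot]_{\MN}$ by computing $C^\infty(M)$-linearity in the first $m+n$ slots and identifying the symbol $\sigma_{[D_1,D_2]_{\MN}}$, then obtain the graded Jacobi identity from the known graded (pre-)Lie structure of the Matsushima--Nijenhuis bracket on the ambient space $\oplus_n\Hom_\Real(\Lambda^{n-1}\Gamma(E)\otimes\Gamma(E),\Gamma(E))$ (the paper cites Chapoton--Livernet and Nijenhuis for this), and finally compute $[\pi,\pi]_{\MN}(u_1,u_2,u_3)=2\bigl((u_2,u_1,u_3)-(u_1,u_2,u_3)\bigr)$ directly, exactly as you do. Your unshuffle bookkeeping for the $m=n=1$ case agrees with the paper's displayed formula, so no gap remains.
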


\begin{proof}
First, we show that the space of multiderivations is closed under the Matsushima--Nijenhuis bracket. For $D_1\in \Der^m(E)$ and $D_2\in \Der^n(E)$, by a direct calculation, we have
\begin{gather*}
[D_1,D_2]_{\MN}(fu_1,u_2,\dots,u_{m+n+1})
\\ \qquad
{}=fD_1\circ D_2(u_1,u_2,\dots,u_{m+n+1})-(-1)^{mn}fD_2\circ D_1(u_1,u_2,\dots,u_{m+n+1})
\\ \qquad\hphantom{=}
{}+\!\!\!\!\sum_{\sigma\in \mathbb{S}_{(m-1,1,n-1)}}\!\!\!\!(-1)^{\sigma}\sigma_{D_2} (u_{\sigma(2)},\dots,u_{\sigma(m+1)})(f)D_1(u_1,u_{\sigma(m+2)},\dots,u_{\sigma(m+n)},u_{m+n+1})
\\ \qquad\hphantom{=}
{}+(-1)^{mn}\sum_{\sigma\in \mathbb{S}_{(n-1,1,m-1)}}(-1)^{\sigma}\sigma_{D_1} (u_{\sigma(2)},\dots,u_{\sigma(n+1)})(f)
\\ \qquad\hphantom{=+(-1)^{mn}}
{}\times D_2(u_1,u_{\sigma(n+2)},\dots,u_{\sigma(m+n)},u_{m+n+1})
\\ \qquad\hphantom{=}
{}-(-1)^{mn}\sum_{\sigma\in \mathbb{S}_{(n-1,1,m-1)}}(-1)^{\sigma}\sigma_{D_1} (u_{\sigma(2)},\dots,u_{\sigma(n+1)})(f)
\\ \qquad\hphantom{=+(-1)^{mn}}
{}\times D_2(u_1,u_{\sigma(n+2)},\dots,u_{\sigma(m+n)},u_{m+n+1})
\\ \qquad\hphantom{=}
{}-\!\!\!\!\sum_{\sigma\in \mathbb{S}_{(m-1,1,n-1)}}\!\!\!\!(-1)^{\sigma}\sigma_{D_2} (u_{\sigma(2)},\dots,u_{\sigma(m+1)})(f)D_1(u_1,u_{\sigma(m+2)},\dots,u_{\sigma(m+n)},u_{m+n+1})
\\ \qquad
{}=f[D_1,D_2]_{\MN}(u_1,u_2,\dots,u_{m+n+1}),
\end{gather*}
which implies that
\begin{gather*}
[D_1,D_2]_{\MN}(fu_1,u_2,\dots,u_{m+n+1})=f[D_1,D_2]_{\MN}(u_1,u_2,\dots,u_{m+n+1}).
\end{gather*}
It~is straightforward to check that $[D_1,D_2]_{\MN}$ is skew-symmetric with respect to its first $m+n$ arguments. Thus $[D_1,D_2]_{\MN}$ is $\CWM$-linear with respect to its first $m+n$ arguments.

By a direct calculation, we have
\begin{align*}
[D_1,D_2]_{\MN}(u_1,u_2,\dots,fu_{m+n+1})={}&f[D_1,D_2]_{\MN}(u_1,u_2,\dots,u_{m+n+1})
\\
&+\sigma_{[D_1,D_2]_{\MN}}(u_1,u_2,\dots,u_{m+n})(f)u_{m+n+1},
\end{align*}
where the symbol $\sigma_{[D_1,D_2]_{\MN}}$ is given by
\begin{gather*}
\sigma_{[D_1,D_2]_{\MN}}(u_1,u_2,\dots,u_{m+n})(f)
\\ \qquad
{}=\sum_{\sigma\in \mathbb{S}_{(m,1,n-1)}}(-1)^{\sigma}\sigma _{D_{1}}(D_{2}(u_{\sigma(1)},\dots,u_{\sigma(m+1)}),u_{\sigma(m+2)},\dots,u_{\sigma(m+n)}))(f)
\\ \qquad\hphantom{=}
{}+\sum_{\sigma\in \mathbb{S}_{(n,1,m-1)}}(-1)^{\sigma}\sigma_{D_{2}} (D_{1}(u_{\sigma(1)},\dots,u_{\sigma(n+1)}),u_{\sigma(n+2)},\dots,u_{\sigma(m+n)})(f)
\\ \qquad\hphantom{=}
{}+(-1)^{mn}\sum_{\sigma\in \mathbb{S}_{(m,n)}}(-1)^{\sigma}\sigma _{D_{1}}(u_{\sigma(1)},\dots,u_{\sigma(n)})(\sigma _{D_{2}}(u_{\sigma(n+1)},\dots,u_{\sigma(n+m)}))(f)
\\ \qquad\hphantom{=}
{}+\sum_{\sigma\in \mathbb{S}_{(m,n)}}(-1)^{\sigma}\sigma _{D_{2}}(u_{\sigma(1)},\dots,u_{\sigma(m)})(\sigma _{D_{1}}(u_{\sigma(m+1)},\dots,u_{\sigma(m+n)}))(f).
\end{gather*}
Thus $[D_1,D_2]_{\MN}\in\Der^{m+n}(E)$.

It~was shown in \cite{ChaLiv,Nijenhuis} that the Matsushima--Nijenhuis bracket provides a graded Lie algebra structure on the graded vector space $\oplus_{n\geq 1}\Hom_\Real\big(\Lambda^{n-1}\Gamma(E)\otimes\Gamma(E),\Gamma(E)\big)$. We~have shown that $\Der^*(E)$ is closed under the Matsushima--Nijenhuis bracket. Thus $(\Der^*(E),[\cdot,\cdot]_{\MN})$ is a graded Lie algebra.

For $\pi\in\Der^1(E)$, we have
\begin{gather*}
 \pi(fu_1,u_2)=f\pi(u_1,u_2),\qquad
 \pi(u_1,fu_2)=f\pi(u_1,u_2)+\sigma_\pi(u_1)(f)u_2,\qquad
 \forall u_1,u_2\in\Gamma(E).
\end{gather*}
Furthermore, by a direct calculation, we have
\begin{align*}
[\pi,\pi]_{\MN}(u_1,u_2,u_3)={}&2(\pi(\pi(u_1,u_2),u_3)-\pi(\pi(u_2,u_1),u_3)-\pi(u_1,\pi(u_2,u_3))
\\
&+\pi(u_2,(u_1,u_3))).
\end{align*}
Thus $(E,\pi,\sigma_\pi)$ is a left-symmetric algebroid if and only if $[\pi,\pi]_{\MN}=0$.
\end{proof}

\begin{rmk}
The cohomology of left-symmetric algebras first appeared in the unpublished paper of Y. Matsushima. Then A. Nijenhuis constructed a graded Lie bracket in \cite{Nijenhuis}, which produces the cohomology theory for left-symmetric algebras. Thus the aforementioned graded Lie bracket is usually called the Matsushima--Nijenhuis bracket.	
\end{rmk}	

Let $(E,\pi,\sigma_\pi)$ be a left-symmetric algebroid. By Theorem $\ref{thm:graded-one}$, we have $[\pi,\pi]_{\MN}=0$. Because of the graded Jacobi identity, we get a coboundary operator $\dM_{\Def}\colon \Der^{n-1}(E)\rightarrow \Der^n(E)$ defi\-ned~by
\begin{gather*}
\dM_{\Def}(D)=(-1)^{n-1}[\pi,D]_{\MN}, \qquad
\forall D\in\Der^{n-1}(E).
\end{gather*}

\begin{pro}
For all $D\in\Der^{n-1}(E)$, we have
\begin{gather}
\dM_{\Def}D(u_1,u_2,\dots,u_{n+1})\nonumber
\\ \qquad
{} =\sum_{i=1}^{n}(-1)^{i+1}\pi(u_i,D(u_1,u_2,\dots,\hat{u_i},\dots,u_{n+1}))\nonumber
\\ \qquad\hphantom{=}
{}+\sum_{i=1}^{n}(-1)^{i+1}\pi(D(u_1,u_2,\dots,\hat{u_i},\dots,u_n,u_i), u_{n+1})\nonumber
\\ \qquad\hphantom{=}
{}-\sum_{i=1}^{n}(-1)^{i+1}D(u_1,u_2,\dots,\hat{u_i},\dots,u_n,\pi(u_i, u_{n+1}))\nonumber
\\ \qquad\hphantom{=}
{}+\sum_{1\leq i<j\leq {n}}(-1)^{i+j}D(\pi(u_i,u_j)-\pi(u_j,u_i),u_1,\dots,\hat{u_i},\dots,\hat{u_j},\dots,u_{n+1})
\label{eq:deformation complex}
\end{gather}
for all $\ u_i\in \Gamma(E)$, $i=1,2,\dots,n+1$ and $\sigma_{\dM_{\Def}D}$
is given by
\begin{align*}
\sigma_{\dM_{\Def}D}(u_1,u_2,\dots,u_n)&= \sum_{i=1}^{n}(-1)^{i+1}[\sigma_\pi(u_i),\sigma_{D}(u_1,u_2,\dots,\hat{u_i},\dots,u_{n})]_{\frkX(M)}
\\
&\phantom{=}+\!\!\!\sum_{1\leq i<j\leq n}\!\!\! (-1)^{i+j}\sigma_{D}(\pi(u_i,u_j)-\pi(u_j,u_i),u_1,\dots,\hat{u_i},\dots,\hat{u_j},\dots,u_n)
\\
&\phantom{=}+\sum_{i=1}^{n}(-1)^{i+1}\sigma_\pi(D(u_1,u_2,\dots,\hat{u_i},\dots,u_n,u_i)).
\end{align*}
\end{pro}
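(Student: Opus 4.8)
The plan is to unfold the definition $\dM_{\Def}D=(-1)^{n-1}[\pi,D]_{\MN}$ and to reorganise the resulting unshuffle sums. Since $\pi\in\Der^1(E)$, the definition of the Matsushima--Nijenhuis bracket in Theorem~\ref{thm:graded-one} gives $[\pi,D]_{\MN}=\pi\circ D-(-1)^{n-1}D\circ\pi$ for $D\in\Der^{n-1}(E)$, and hence $\dM_{\Def}D=(-1)^{n-1}\pi\circ D-D\circ\pi$ (the membership $\dM_{\Def}D\in\Der^n(E)$ is already guaranteed by Theorem~\ref{thm:graded-one}). So it suffices to write out $\pi\circ D$ and $D\circ\pi$ explicitly from the formula for $D_1\circ D_2$ and to keep track of the signs.

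First I would expand $\pi\circ D$. Since $\pi$ has only one ``wedge'' slot and one distinguished slot, each of the two summands in the formula for $D_1\circ D_2$ collapses to a single alternating sum: a block of length one in an $(m,1,n-1)$- or $(n,m)$-type unshuffle is merely a choice of one index, and the remaining arguments are then forced (in their given order, $D$ being skew-symmetric in its wedge slots). Grouping the terms according to which slot of $\pi$ receives the element $D(\dots)$, one obtains $\sum_{i=1}^{n}(-1)^{i+1}\pi\bigl(u_i,D(u_1,\dots,\hat u_i,\dots,u_{n+1})\bigr)$ from the distinguished slot and $\sum_{i=1}^{n}(-1)^{i+1}\pi\bigl(D(u_1,\dots,\hat u_i,\dots,u_n,u_i),u_{n+1}\bigr)$ from the wedge slot; after multiplying by the global factor $(-1)^{n-1}$ and absorbing the Koszul signs internal to $\circ$, these become the first and second sums of \eqref{eq:deformation complex}.

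Then I would expand $D\circ\pi$, organised according to whether the element $\pi(\dots)$ is plugged into one of the $n-1$ wedge slots of $D$ or into its distinguished slot. In the first case, the unshuffle sum combined with the skew-symmetry of $D$ in its wedge slots and with the fact that $\pi$ is \emph{not} skew-symmetric collapses to $\sum_{1\le i<j\le n}(-1)^{i+j}D\bigl(\pi(u_i,u_j)-\pi(u_j,u_i),u_1,\dots,\hat u_i,\dots,\hat u_j,\dots,u_{n+1}\bigr)$, the difference $\pi(u_i,u_j)-\pi(u_j,u_i)$ being exactly the antisymmetrisation of the pair of arguments fed to $\pi$; in the second case it collapses to $\sum_{i=1}^{n}(-1)^{i+1}D\bigl(u_1,\dots,\hat u_i,\dots,u_n,\pi(u_i,u_{n+1})\bigr)$. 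Together with the overall sign $-D\circ\pi$, these produce the fourth sum and minus the third sum of \eqref{eq:deformation complex}, which is the claim for $\dM_{\Def}D$.

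Finally, the formula for $\sigma_{\dM_{\Def}D}$ follows by specialising the expression for $\sigma_{[D_1,D_2]_{\MN}}$ computed in the proof of Theorem~\ref{thm:graded-one} to $D_1=\pi$, $D_2=D$ and multiplying by $(-1)^{n-1}$: since $\sigma_\pi\in\Gamma(\Hom(E,TM))$ has a single slot, the length-one blocks collapse as before, the two iterated-symbol terms $\sigma_{D_1}(\cdots)(\sigma_{D_2}(\cdots))(f)$ and $\sigma_{D_2}(\cdots)(\sigma_{D_1}(\cdots))(f)$ combine into the commutator $[\sigma_\pi(u_i),\sigma_D(\cdots)]_{\frkX(M)}$, and one is left with the three displayed sums. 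Equivalently, one may simply evaluate the formula just obtained for $\dM_{\Def}D$ on $(u_1,\dots,u_n,fu_{n+1})$ and read the symbol off the Leibniz defect, the terms not proportional to $u_{n+1}$ cancelling in pairs. The only genuine difficulty throughout is the sign bookkeeping: matching the unshuffle signs $(-1)^\sigma$ against the $(-1)^{i+1}$ and $(-1)^{i+j}$ of the target while carrying the global $(-1)^{n-1}$ and the Koszul signs inside $\circ$. Note that no use is made of the left-symmetry of $\pi$ here; the formula holds for an arbitrary $\pi\in\Der^1(E)$, and the identity $[\pi,\pi]_{\MN}=0$ only enters later, in guaranteeing $\dM_{\Def}^2=0$.
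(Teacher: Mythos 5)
Your proposal is correct and follows the same route as the paper, which simply states that the formula ``follows from straightforward verification'': you unfold $\dM_{\Def}D=(-1)^{n-1}\pi\circ D-D\circ\pi$, collapse the length-one unshuffle blocks, and the signs $(-1)^{i+1}$ and $(-1)^{i+j}$ come out as you describe, with the symbol obtained either from the general formula for $\sigma_{[D_1,D_2]_{\MN}}$ or from the Leibniz defect. Your closing remark that left-symmetry of $\pi$ is not used here (only $[\pi,\pi]_{\MN}=0$ is needed for $\dM_{\Def}^2=0$) is also accurate.
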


\begin{proof}
 It~follows from straightforward verification.
\end{proof}

\begin{defi}
The cochain complex $\big(\Der^*(E)=\bigoplus _{n\geq 0}\Der^n(E),\dM_{\Def}\big)$ is called
the {\it deformation complex} of the left-symmetric algebroid $E$. The corresponding $k$-th cohomology group, which we denote by $\Hi_{\Def}^k(E)$, is called the {\it $k$-th deformation cohomology group}.
\end{defi}

\begin{rmk}
 The coboundary operator $\dM_{\Def}$ given by \eqref{eq:deformation complex} is exactly the coboundary operator given in \cite{LiuShengBaiChen} in the study of deformations of left-symmetric algebroids. Here we give this coboundary operator $\dM_{\Def}$ intrinsically using the Matsushima--Nijenhuis bracket.
\end{rmk}

\subsection[Relations between the graded Lie algebra \protect{$(C\textasciicircum{}*(E,A),[[cdot,cdot]])$} and \protect{$(Der\textasciicircum{}*(E),[cdot,cdot]\_\{MN\})$}]
{Relations between the graded Lie algebra $\boldsymbol{(\huaC^*(E,\huaA),\Courant{\cdot,\cdot})}$\\ and $\boldsymbol{(\Der^*(E),[\cdot,\cdot]_{\MN})}$}

Let $(\huaA,[\cdot,\cdot]_\huaA,a_\huaA;\rho)$ be a \LP pair. We~define a bundle map $\Phi\colon \huaC^{k}(E,\huaA)\rightarrow \Hom_\Real(\Lambda^{k}\Gamma(E)\otimes
\Gamma(E),\Gamma(E))$ as follows: for $P\in\huaC^{k}(E,\huaA)$,
\begin{gather}\label{ex:MC-eleven}
\Phi(P)(u_1,\dots,u_k,u_{k+1})=\rho(P(u_1,\dots,u_k))u_{k+1}, \qquad
\forall u_1,\dots,u_{k+1}\in\Gamma(E).
\end{gather}

\begin{lem}
With the above notations, $\Phi(P)\in \Der^{k}(E)$ and $\sigma_{\Phi(P)}=a_\huaA\circ P.$
\end{lem}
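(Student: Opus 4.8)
The plan is to verify directly from the definition \eqref{ex:MC-eleven} that $\Phi(P)$ satisfies the two defining conditions of a degree-$k$ multiderivation, namely $C^\infty(M)$-linearity in the first $k$ slots and the Leibniz-type identity in the last slot. Since $P\in\huaC^k(E,\huaA)=\Gamma(\Hom(\wedge^k E,\huaA))$ is already function-linear and skew-symmetric in all $k$ of its arguments, for $i\le k$ we immediately get
\begin{gather*}
\Phi(P)(u_1,\dots,fu_i,\dots,u_k,u_{k+1})=\rho(P(u_1,\dots,fu_i,\dots,u_k))u_{k+1}=f\,\Phi(P)(u_1,\dots,u_{k+1}),
\end{gather*}
using that $\rho$ is $C^\infty(M)$-linear as a bundle map $\huaA\to\frkD(E)$; skew-symmetry in the first $k$ slots is inherited from $P$ in the same way.

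Next I would treat the last slot. Here the key point is that $\rho(x)$, for a fixed section $x\in\Gamma(\huaA)$, is a covariant differential operator on $E$, i.e.\ $\rho(x)(fu)=f\rho(x)(u)+a_\huaA(x)(f)u$ for $f\in\CWM$ and $u\in\Gamma(E)$. Applying this with $x=P(u_1,\dots,u_k)$ gives
\begin{gather*}
\Phi(P)(u_1,\dots,u_k,fu_{k+1})=\rho(P(u_1,\dots,u_k))(fu_{k+1})=f\,\Phi(P)(u_1,\dots,u_{k+1})+a_\huaA(P(u_1,\dots,u_k))(f)\,u_{k+1},
\end{gather*}
which is exactly the required identity with symbol $\sigma_{\Phi(P)}(u_1,\dots,u_k)=a_\huaA(P(u_1,\dots,u_k))$, that is, $\sigma_{\Phi(P)}=a_\huaA\circ P$. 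Together with the first step this shows $\Phi(P)\in\Der^k(E)$ and pins down its symbol.

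There is essentially no hard part: the whole statement is a bookkeeping consequence of the definition of a representation of a Lie algebroid (the anchor appearing in the Leibniz rule of $\frkD(E)$) combined with the function-linearity of $P$. The only mild subtlety is keeping straight which of the $k+1$ arguments come from $P$ (the first $k$) and which is the one $\rho$ acts on (the last), and matching the resulting anchor term with the definition of the symbol; I would state this carefully and then say the remaining verification is a direct computation.
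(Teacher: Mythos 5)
Your proposal is correct and follows essentially the same route as the paper's own proof: check $C^\infty(M)$-linearity in the first $k$ slots from function-linearity of $P$ plus skew-symmetry, then use the Leibniz rule $\rho(x)(fu)=f\rho(x)(u)+a_\huaA(x)(f)u$ for the covariant differential operator $\rho(P(u_1,\dots,u_k))$ to read off the symbol $a_\huaA\circ P$. Nothing is missing.
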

\begin{proof}
By the properties of the representation $\rho$, we have
\begin{align*}
\Phi(P)(fu_1,\dots,u_k,u_{k+1})&=\rho(P(fu_1,\dots,u_k))u_{k+1}
\\
&=f\rho(P(u_1,\dots,u_k))u_{k+1}
\\
&=f\Phi(P)(u_1,\dots,u_k,u_{k+1}).
\end{align*}
Since $\Phi(P)$ is skew-symmetric with respect to its first $k$ arguments, $\Phi(P)$ is $\CWM$-linear with respect to its first $k$ arguments.

Similarly, by a direct calculation, we have
\begin{align*}
\Phi(P)(u_1,\dots,u_k,fu_{k+1})&=\rho(P(u_1,\dots,u_k))(fu_{k+1})
\\
&=f\rho(P(u_1,\dots,u_k))(u_{k+1})+a_{\huaA}(P(u_1,\dots,u_k))(f)u_{k+1}
\\
&=f\Phi(P)(fu_1,\dots,u_k,u_{k+1})+\sigma_{\Phi(P)}(u_1,\dots,u_k)(f)u_{k+1}.
\end{align*}
Thus $\Phi(P)\in\Der^{k}(E)$.
\end{proof}

Recall from Theorems \ref{thm:graded Lie algebra} and~\ref{thm:graded-one} that $(\huaC^*(E,\huaA),\Courant{\cdot,\cdot})$ and $(\Der^*(E),[\cdot,\cdot]_{\MN})$ are graded Lie algebras whose Maurer--Cartan elements are relative Rota--Baxter operators and left-symmetric algebroids respectively.
\begin{thm}\label{thm:homomorphism graded Lie}
Let $(\huaA,[\cdot,\cdot]_\huaA,a_\huaA;\rho)$ be a \LP pair. Then $\Phi $ given by \eqref{ex:MC-eleven} is a homomorphism of graded Lie algebras from $(\huaC^*(E,\huaA),\Courant{\cdot,\cdot})$ to $(\Der^*(E),[\cdot,\cdot]_{\MN})$.
\end{thm}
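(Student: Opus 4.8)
The plan is to verify directly that $\Phi$ intertwines the two brackets, i.e., that
$$
\Phi(\Courant{P,Q}) = [\Phi(P),\Phi(Q)]_{\MN}
$$
for all $P\in\huaC^m(E,\huaA)$ and $Q\in\huaC^n(E,\huaA)$. Since both sides are multiderivations (by the preceding lemma and Theorem \ref{thm:graded-one}), it suffices to compare their values on sections $u_1,\dots,u_{m+n+1}\in\Gamma(E)$, and by $\CWM$-linearity in the first $m+n$ slots there is no extra anchor term to track — the symbols will automatically match since $\sigma_{\Phi(P)} = a_\huaA\circ P$ and $a_\huaA$ is a Lie algebroid morphism with respect to $[\cdot,\cdot]_\huaA$ and $\rho$ takes values in $\frkD(\huaA)$.

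First I would expand $\Phi(\Courant{P,Q})(u_1,\dots,u_{m+n+1})$, which by definition equals $\rho\big(\Courant{P,Q}(u_1,\dots,u_{m+n})\big)u_{m+n+1}$. Plugging in formula \eqref{eq:graded Lie bracket} produces three groups of terms: one with $\rho\big(P(\rho(Q(\cdots))(\cdot),\cdots)\big)u_{m+n+1}$, one with $\rho\big(Q(\rho(P(\cdots))(\cdot),\cdots)\big)u_{m+n+1}$ with sign $-(-1)^{mn}$, and one with $\rho\big([P(\cdots),Q(\cdots)]_\huaA\big)u_{m+n+1}$ with sign $(-1)^{mn}$. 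On the other side, $[\Phi(P),\Phi(Q)]_{\MN} = \Phi(P)\circ\Phi(Q) - (-1)^{mn}\Phi(Q)\circ\Phi(P)$, and each $\circ$ is a sum of two pieces by the formula in Theorem \ref{thm:graded-one}. Using $\Phi(P)(v_1,\dots,v_m,w) = \rho(P(v_1,\dots,v_m))w$ one rewrites each composition term. The first two pieces of $[\Phi(P),\Phi(Q)]_{\MN}$ (the ones where $D_2$ is fed into the first slot of $D_1$, and symmetrically) match the first two groups above term by term after matching the unshuffle index sets $\mathbb{S}_{(m,1,n-1)}$ and $\mathbb{S}_{(n,1,m-1)}$.

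The main obstacle — and the only nontrivial identity — is reconciling the remaining terms. On the $\Courant{\cdot,\cdot}$ side we have $\rho\big([P(u_{\sigma(1)},\dots,u_{\sigma(n)}),Q(u_{\sigma(n+1)},\dots,u_{\sigma(m+n)})]_\huaA\big)u_{m+n+1}$, while on the $[\cdot,\cdot]_{\MN}$ side the leftover terms are of the form $\rho\big(P(\dots)\big)\rho\big(Q(\dots)\big)u_{m+n+1}$ and $\rho\big(Q(\dots)\big)\rho\big(P(\dots)\big)u_{m+n+1}$ (from the second piece of each $\circ$, where $D_2$ lands in the last slot of $D_1$). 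These are reconciled precisely by the fact that $\rho\colon\huaA\to\frkD(\huaA)$ is a morphism of Lie algebroids, so $\rho([x,y]_\huaA) = \rho(x)\rho(y) - \rho(y)\rho(x)$ as operators on $\Gamma(\huaA)$ — wait, more carefully, $\rho$ represents on $E$, so the identity is $\rho([x,y]_\huaA)w = \rho(x)(\rho(y)w) - \rho(y)(\rho(x)w)$ for $x,y\in\Gamma(\huaA)$, $w\in\Gamma(E)$. Applying this with $x = P(\cdots)$, $y = Q(\cdots)$, $w = u_{m+n+1}$ converts the bracket term into exactly the difference of the two leftover composition terms, with the signs working out because the unshuffle set $\mathbb{S}_{(n,m)}$ appearing in the bracket term of \eqref{eq:graded Lie bracket} matches (after a relabeling accounting for the reversed order $P,Q$ versus $Q,P$ and the factor $(-1)^{mn}$) the index sets governing those two pieces. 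I would organize the bookkeeping of signs by fixing, once and for all, the correspondence between an $(m,n)$-unshuffle and an $(n,m)$-unshuffle obtained by swapping the two blocks, which contributes the sign $(-1)^{mn}$, and checking it cancels the explicit $(-1)^{mn}$ factors. Once all three groups are matched, the theorem follows.
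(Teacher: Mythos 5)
Your proposal is correct and follows essentially the same route as the paper's own proof: expand both sides on sections $u_1,\dots,u_{m+n+1}$, match the two ``substitution'' groups of terms directly, and use the representation property $\rho([x,y]_\huaA)w=\rho(x)\rho(y)w-\rho(y)\rho(x)w$ to convert the bracket term of $\Courant{P,Q}$ into the two remaining composition terms of $[\Phi(P),\Phi(Q)]_{\MN}$. The sign bookkeeping via the block-swap correspondence between unshuffles is exactly what makes the paper's term-by-term comparison work.
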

\begin{proof}
On the one hand, for $P\in\huaC^{n}(E,\huaA)$, $Q\in \huaC^{m}(E,\huaA)$, we have
\begin{gather*}
\Phi(\Courant{P,Q})(u_1,u_2,\dots,u_{m+n+1})
\\ \qquad
{}=\rho(\Courant{P,Q}(u_1,\dots,u_{m+n}))u_{m+n+1}
\\ \qquad
{}=\sum_{\mathbb{S}_{(m,1,n-1)}}(-1)^{\sigma}
\rho(P(\rho(Q(u_{\sigma(1)},\dots,u_{\sigma(m)}))u_{\sigma(m+1)},u_{\sigma(m+2)}, \dots,u_{\sigma(m+n)}))u_{m+n+1}
\\ \qquad
{}-\!(-1)^{mn}\!\!\!\!\!\!\sum_{\mathbb{S}_{(n,1,m-1)}}\!\!\!\!\!(-1)^{\sigma}
\rho(Q(\rho(P(u_{\sigma(1)},\dots,u_{\sigma(n)}))u_{\sigma(n+1)},u_{\sigma(n+2)}, \dots,u_{\sigma(m+n)}))u_{m+n+1}
\\ \qquad
{}+\!(-1)^{mn}\!\!\sum_{\mathbb{S}_{(n,m)}}\!\!(-1)^{\sigma}\!
\rho([P(u_{\sigma(1)},\dots,u_{\sigma(n)}),Q(u_{\sigma(n+1)},u_{\sigma(n+2)}, \dots,u_{\sigma(m+n)})]_{\huaA})u_{m+n+1}
\\ \qquad
{}=\sum_{\mathbb{S}_{(m,1,n-1)}}(-1)^{\sigma}
\rho(P(\rho(Q(u_{\sigma(1)},\dots,u_{\sigma(m)}))u_{\sigma(m+1)},u_{\sigma(m+2)}, \dots,u_{\sigma(m+n)}))u_{m+n+1}
\\ \qquad
{}-\!(-1)^{mn}\!\!\!\!\!\!\sum_{\mathbb{S}_{(n,1,m-1)}}\!\!\!\!\!(-1)^{\sigma}
\rho(Q(\rho(P(u_{\sigma(1)},\dots,u_{\sigma(n)}))u_{\sigma(n+1)},u_{\sigma(n+2)}, \dots,u_{\sigma(m+n)}))u_{m+n+1}
\\ \qquad
{}+\!(-1)^{mn}\sum_{\mathbb{S}_{(n,m)}}\!(-1)^{\sigma}
\rho(P(u_{\sigma(1)},\dots,u_{\sigma(n)}))\rho(Q(u_{\sigma(n+1)},u_{\sigma(n+2)}, \dots,u_{\sigma(m+n)}))u_{m+n+1}
\\ \qquad
{}+\!(-1)^{mn}\!\!\sum_{\mathbb{S}_{(n,m)}}\!(-1)^{\sigma}
\rho(Q(u_{\sigma(n+1)},u_{\sigma(n+2)},\dots,u_{\sigma(m+n)}))\rho(P(u_{\sigma(1)}, \dots,u_{\sigma(n)}))u_{m+n+1}.
\end{gather*}
On the other hand, we have
\begin{gather*}
(\Phi(P)\circ\Phi(Q))(u_1,u_2,\dots,u_{m+n+1})
\\ \qquad
{}=\!\!\sum_{\mathbb{S}_{(m,1,n-1)}}\!\!(-1)^{\sigma}\rho(P(\rho(Q(u_{\sigma(1)},\dots,u_{\sigma(m)})) u_{\sigma(m+1)},u_{\sigma(m+2)},\dots,u_{\sigma(m+n)}))u_{m+n+1}
\\ \qquad
{}+(-1)^{mn}\!\!\sum_{\mathbb{S}_{(n,m)}}\!\!\!(-1)^{\sigma}\rho(P(u_{\sigma(1)},\dots,u_{\sigma(n)})) \rho(Q(u_{\sigma(n+1)},u_{\sigma(n+2)},\dots,u_{\sigma(m+n)}))u_{m+n+1},
\end{gather*}
and
\begin{gather*}
-(-1)^{mn}(\Phi(Q)\circ\Phi(P))(u_1,u_2,\dots,u_{m+n+1})
\\ \qquad
{}=\sum_{\mathbb{S}_{(n,1,m-1)}}(-1)^{\sigma}\rho(Q(\rho(P(u_{\sigma(1)},\dots,u_{\sigma(n)})) u_{\sigma(n+1)},u_{\sigma(n+2)},\dots,u_{\sigma(m+n)}))u_{m+n+1}
\\ \qquad
{}-(-1)^{mn}\!\!\sum_{\mathbb{S}_{(n,m)}}\!\!(-1)^{\sigma}
\rho(Q(u_{\sigma(n+1)},u_{\sigma(n+2)},\dots,u_{\sigma(m+n)})) \rho(P(u_{\sigma(1)},\dots,u_{\sigma(n)}))u_{m+n+1}.
\end{gather*}
Thus, we have
\begin{gather*}
\Phi(\Courant{P,Q})=\Phi(P)\circ\Phi(Q)-(-1)^{mn}\Phi(Q)\circ\Phi(P)=[\Phi(P),\Phi(Q)]_\MN,
\end{gather*}
that is, $\Phi$ is a homomorphism from $(\huaC^*(E,\huaA),\Courant{\cdot,\cdot})$ to $(\Der^*(E),[\cdot,\cdot]_{\MN})$.
\end{proof}

The following conclusion has been proved in \cite{LiuShengBaiChen} by a direct calculation. We~give an intrinsic proof.
\begin{cor}\label{cor:construction left-symmetrci algebroid}
{\sloppy
Let $T\colon E\longrightarrow \huaA$ be a relative Rota--Baxter operator on a \LP pair $(\huaA,[\cdot,\cdot]_\huaA,a_\huaA;\rho)$. Then $(E, \ast_{T}, a_T=a_\huaA\circ T)$ is a left-symmetric algebroid, where $\ast_{T}$ is given~by
\begin{gather*}
u\ast_{T}v=\rho(Tu)(v),\qquad \forall u,v\in\Gamma(E).
\end{gather*}}
\end{cor}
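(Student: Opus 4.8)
The plan is to deduce this corollary from Theorem~\ref{thm:homomorphism graded Lie} together with Theorems~\ref{thm:graded Lie algebra} and~\ref{thm:graded-one}, exactly in the spirit advertised by the phrase ``we give an intrinsic proof''. The point is that $\Phi$ maps Maurer--Cartan elements to Maurer--Cartan elements, since it is a homomorphism of graded Lie algebras: if $\Courant{T,T}=0$ then $[\Phi(T),\Phi(T)]_{\MN}=\Phi(\Courant{T,T})=0$. By Theorem~\ref{thm:graded Lie algebra}, a relative Rota--Baxter operator $T\colon E\to\huaA$ is precisely a Maurer--Cartan element of $(\huaC^*(E,\huaA),\Courant{\cdot,\cdot})$; hence $\Phi(T)\in\Der^1(E)$ is a Maurer--Cartan element of $(\Der^*(E),[\cdot,\cdot]_{\MN})$, which by Theorem~\ref{thm:graded-one} means exactly that $(E,\Phi(T),\sigma_{\Phi(T)})$ is a left-symmetric algebroid.

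First I would unwind the definition \eqref{ex:MC-eleven} in degree $k=1$: for $u,v\in\Gamma(E)$ one has $\Phi(T)(u,v)=\rho(T u)(v)=u\ast_T v$, so $\Phi(T)$ is exactly the multiplication $\ast_T$. Next I would invoke the lemma preceding Theorem~\ref{thm:homomorphism graded Lie}, which gives $\Phi(T)\in\Der^1(E)$ and identifies the symbol as $\sigma_{\Phi(T)}=a_\huaA\circ T=a_T$; this is what guarantees the two bundle-map axioms ($u\ast_T(fv)=f(u\ast_T v)+a_T(u)(f)v$ and $(fu)\ast_T v=f(u\ast_T v)$) in the definition of a left-symmetric algebroid. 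Then the left-symmetry identity $[\Phi(T),\Phi(T)]_{\MN}=0$ supplies the remaining axiom, that the associator of $\ast_T$ is symmetric in its first two slots, via the formula for $[\pi,\pi]_{\MN}$ computed in the proof of Theorem~\ref{thm:graded-one}.

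The only genuine content to verify is that $T$ being a relative Rota--Baxter operator indeed makes $\Courant{T,T}=0$, but this is literally the equivalence recorded at the end of the proof of Theorem~\ref{thm:graded Lie algebra}, where $\Courant{T,T}(u_1,u_2)=2\bigl(T(\rho(Tu_1)u_2)-T(\rho(Tu_2)u_1)-[Tu_1,Tu_2]_\huaA\bigr)$. So there is essentially no obstacle: the corollary is a one-line consequence of functoriality of Maurer--Cartan elements under the graded Lie algebra morphism $\Phi$. If one wanted to be fully explicit, the ``hard part'' — really just a bookkeeping check — would be to confirm that the $a_T=a_\huaA\circ T$ appearing as the anchor here coincides with the symbol $\sigma_{\Phi(T)}$ produced by the abstract machinery, which is precisely the content of the lemma above; and to note that this $\ast_T$ and its sub-adjacent bracket are compatible with the Lie algebroid $(E,[\cdot,\cdot]_T,a_T)$ of Lemma~\ref{lem:new Lie algebroid}, since $u\ast_T v-v\ast_T u=\rho(Tu)v-\rho(Tv)u=[u,v]_T$.
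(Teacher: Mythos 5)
Your proposal is correct and follows essentially the same route as the paper's own proof: use Theorem~\ref{thm:graded Lie algebra} to see $\Courant{T,T}=0$, push this through the graded Lie algebra homomorphism $\Phi$ of Theorem~\ref{thm:homomorphism graded Lie} to get $[\Phi(T),\Phi(T)]_{\MN}=0$, and conclude via Theorem~\ref{thm:graded-one}. The extra bookkeeping you supply (identifying $\sigma_{\Phi(T)}=a_\huaA\circ T$ via the lemma on $\Phi(P)\in\Der^k(E)$, and checking consistency with Lemma~\ref{lem:new Lie algebroid}) is implicit in the paper but entirely consistent with it.
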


\begin{proof}
 Since $T$ is a relative Rota--Baxter operator on a \LP pair $(\huaA,[\cdot,\cdot]_\huaA,a_\huaA;\rho)$, by Theo\-rem~\ref{thm:graded Lie algebra}, we have
\begin{gather*}
\Courant{T,T}=0.
\end{gather*}
By Theorem \ref{thm:homomorphism graded Lie}, we have
\begin{gather*}
[\Phi(T),\Phi(T)]_{\MN}=0.
\end{gather*}
By Theorem \ref{thm:graded-one}, $\Phi(T)$ provides a left-symmetric algebroid structure on $E$. Note that
\begin{gather*}
u\ast_{T}v=\Phi(T)(u,v)=\rho(Tu)(v).
\end{gather*}
 Thus $(E, \ast_{T}, a_T=a_\huaA\circ T)$ is a left-symmetric algebroid.
\end{proof}

\begin{thm}
Let $T$ be a relative Rota--Baxter operator on a \LP pair $(\huaA;\rho)$. Then $\Phi$ given by $(\ref{ex:MC-eleven})$ is a homomorphism from the cochain complex $(\huaC^*(E,\huaA),\dM_{T})$ to $(\Der^*(E),\dM_{\Def})$, that is, $\dM_{\Def}\circ\Phi=\Phi\circ \dM_{T}$. Consequently, $\Phi$ induces a homomorphism $\Phi_{*}\colon \huaH^k(E,\huaA)\rightarrow \Hi_{\Def}^{k}(E)$ from the cohomology groups of the relative Rota--Baxter operator $T$ to the deformation cohomology groups of the induced left-symmetric algebroid $(E, \ast_{T}, a_T)$.
\end{thm}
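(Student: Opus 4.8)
The plan is to avoid any term-by-term computation and instead read the statement off from Theorem~\ref{thm:homomorphism graded Lie}, which already tells us that $\Phi\colon(\huaC^*(E,\huaA),\Courant{\cdot,\cdot})\to(\Der^*(E),[\cdot,\cdot]_{\MN})$ is a strict homomorphism of graded Lie algebras. The key observation is that each of the two differentials in question is, up to a degree-dependent sign, the adjoint action of a Maurer--Cartan element, and that $\Phi$ carries one such element to the other. Concretely, I would first fix notation: by Corollary~\ref{cor:construction left-symmetrci algebroid} the element $\pi:=\Phi(T)\in\Der^1(E)$ is exactly the multiplication $\ast_T$ of the induced left-symmetric algebroid $(E,\ast_T,a_T)$, with symbol $\sigma_\pi=a_\huaA\circ T=a_T$. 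Hence the deformation complex $(\Der^*(E),\dM_{\Def})$ occurring in the statement is precisely the one attached to $\pi$, so that $\dM_{\Def}D=(-1)^{n-1}[\pi,D]_{\MN}$ for $D\in\Der^{n-1}(E)$; on the other side, recall $\dM_T P=(-1)^{k}\Courant{T,P}$ for $P\in\huaC^k(E,\huaA)$.

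Then for $P\in\huaC^k(E,\huaA)$ I would simply compute both sides. Since $\Phi$ preserves degree, $\Phi(P)\in\Der^k(E)$, so $\dM_{\Def}(\Phi(P))=(-1)^{(k+1)-1}[\pi,\Phi(P)]_{\MN}=(-1)^{k}[\pi,\Phi(P)]_{\MN}$. On the other hand, using that $\Phi$ is a strict homomorphism of graded Lie algebras (Theorem~\ref{thm:homomorphism graded Lie}), $\Phi(\dM_T P)=(-1)^{k}\Phi(\Courant{T,P})=(-1)^{k}[\Phi(T),\Phi(P)]_{\MN}=(-1)^{k}[\pi,\Phi(P)]_{\MN}$. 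Comparing, $\dM_{\Def}\circ\Phi=\Phi\circ\dM_T$, so $\Phi$ is a morphism of cochain complexes. A morphism of cochain complexes always descends to cohomology, which yields $\Phi_*\colon\huaH^k(E,\huaA)\to\Hi_{\Def}^k(E)$, and by the previous paragraph $\Hi_{\Def}^k(E)$ is the deformation cohomology of $(E,\ast_T,a_T)$.

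There is no real obstacle here once Theorem~\ref{thm:homomorphism graded Lie} is in hand; the one thing to watch is that the signs line up, which they do because $\dM_T$ contributes $(-1)^k$ on $\huaC^k(E,\huaA)$ while $\dM_{\Def}$ contributes $(-1)^{(k+1)-1}=(-1)^k$ on $\Der^k(E)$, and $\Phi$ shifts no degree. If one preferred a self-contained verification not relying on the graded-Lie-algebra homomorphism, one could instead substitute the explicit formula~\eqref{eq:coboundary operator of RB-operator} for $\dM_T P$, apply $\Phi$, and match it against~\eqref{eq:deformation complex} with $\pi=\Phi(T)$, using $u\ast_T v=\rho(Tu)v$ and $\sigma_{\Phi(P)}=a_\huaA\circ P$; this route is considerably longer and I would not take it.
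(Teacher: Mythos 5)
Your proposal is correct and is essentially the paper's own argument: both rely on Theorem~\ref{thm:homomorphism graded Lie} to convert $\Phi(\Courant{T,P})$ into $[\Phi(T),\Phi(P)]_{\MN}$, identify $\Phi(T)$ with the left-symmetric multiplication $\ast_T$, and check that the signs $(-1)^k$ on both differentials agree. The sign bookkeeping you spell out is exactly what makes the paper's one-line chain of equalities work.
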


\begin{proof}
By Theorem \ref{thm:homomorphism graded Lie}, we have
\begin{gather*}
\Phi(\Courant{P,Q})=[\Phi(P),\Phi(Q)]_\MN.
\end{gather*}
Note that the left-symmetric algebroid structure on $E$ is given by $\Phi(T)$. For $P\in\huaC^{k}(E,\huaA)$, we have
\begin{eqnarray*}
 \dM_{\Def}\Phi(P)=(-1)^k[\Phi(T),\Phi(P)]_{\MN}=\Phi\big((-1)^k\Courant{T,P}\big)=\Phi(\dM_T P),
\end{eqnarray*}
which implies that $\dM_{\Def}\circ\Phi=\Phi\circ \dM_{T}$. The rest is direct.
\end{proof}

At the end of this section, we show that a formal deformation of a relative Rota--Baxter operator induces a formal deformation of the associated left-symmetric algebroid.

Recall that a {\it formal deformation} of a left-symmetric algebroid $(\huaA,\ast_{\huaA},a_{\huaA})$ is a left-symmetric algebroid $(\huaA\otimes \Real[[t]],\ast_t,a_t)$ with power series
\begin{gather*}
 \ast_t =\sum_{i=0}^{+\infty}\mu_it^i\in \Der^1(\huaA)[[t]],\qquad
 a_t=\sum_{i=0}^{+\infty}\frka_i t^i \in \Hom(\huaA,TM)[[t]],
\end{gather*}
such that $(\huaA\otimes \Real[[t]],\ast_t,a_t)_{t=0}=(\huaA,\ast_{\huaA},a_{\huaA})$.

\begin{pro}
 Let $T_t$ be a formal deformation of the relative Rota--Baxter operator $T\colon E\allowbreak\longrightarrow \huaA$ on a \LP pair $(\huaA,[\cdot,\cdot]_\huaA,a_\huaA;\rho)$. Then $(E\otimes \Real[[t]],\ast_t,a_t=a_\huaA\circ T_t)$ is a formal deformation of the left-symmetric algebroid $(E, \ast_{T}, a_T)$ associated to the relative Rota--Baxter operator $T$, where
\begin{gather*}
u\ast_t v=\rho(T_t(u))v,\qquad\forall u,v\in\Gamma(E).
\end{gather*}
\end{pro}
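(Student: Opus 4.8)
The plan is to deduce the statement from Corollary~\ref{cor:construction left-symmetrci algebroid} by ``base change'' along $\Real\hookrightarrow\Real[[t]]$, exactly in the $\Real[[t]]$-linear sense used throughout Section~\ref{sec:Cohomology-RRB-operator}. By hypothesis $T_t$ is a relative Rota--Baxter operator on the \LP pair $(\huaA\otimes\Real[[t]],[\cdot,\cdot]_\huaA,a_\huaA;\rho)$; equivalently, by Theorem~\ref{thm:graded Lie algebra} extended $\Real[[t]]$-linearly, $\Courant{T_t,T_t}=0$ in the graded Lie algebra $\huaC^*(E,\huaA)[[t]]$ obtained from the bracket~\eqref{eq:graded Lie bracket}. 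First I would observe that the proof of Corollary~\ref{cor:construction left-symmetrci algebroid} uses only the morphism property of $\Phi$ (Theorem~\ref{thm:homomorphism graded Lie}) and the Maurer--Cartan characterization of left-symmetric algebroids (Theorem~\ref{thm:graded-one}), and that both of these identities persist verbatim after extending all structure operations $\Real[[t]]$-linearly. Applying that corollary to the \LP pair above and to $T_t$ then yields that $(E\otimes\Real[[t]],\ast_t,a_t=a_\huaA\circ T_t)$ is a left-symmetric algebroid, where $u\ast_t v=\rho(T_t(u))v=\Phi(T_t)(u,v)$ is precisely the multiplication in the statement.

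It then remains to check that $(\ast_t,a_t)$ has the required power-series form and reduces to $(\ast_T,a_T)$ at $t=0$. Writing $T_t=\sum_{i\geq0}\huaT_i t^i$, one has $\ast_t=\sum_{i\geq0}\mu_i t^i$ with $\mu_i(u,v):=\rho(\huaT_i(u))v=\Phi(\huaT_i)(u,v)$; by the lemma preceding Theorem~\ref{thm:homomorphism graded Lie} applied with $k=1$, each $\Phi(\huaT_i)$ lies in $\Der^1(E)$ with symbol $a_\huaA\circ\huaT_i$, so $\ast_t\in\Der^1(E)[[t]]$, and similarly $a_t=a_\huaA\circ T_t=\sum_{i\geq0}(a_\huaA\circ\huaT_i)t^i\in\Hom(E,TM)[[t]]$. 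Since $\huaT_0=T$, we get $\mu_0=\Phi(T)=\ast_T$ and $a_\huaA\circ\huaT_0=a_\huaA\circ T=a_T$, hence $(E\otimes\Real[[t]],\ast_t,a_t)_{t=0}=(E,\ast_T,a_T)$, which is what the definition of a formal deformation of a left-symmetric algebroid requires.

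I do not expect a serious obstacle: the entire content is that every fiberwise/pointwise identity established in Sections~\ref{sec:MC-RRB-operator} and~\ref{sec:relation to MN bracket} is a ``polynomial'' identity in the structure maps $[\cdot,\cdot]_\huaA$, $\rho$, $a_\huaA$ and the operator, and therefore remains valid after extending all operations $\Real[[t]]$-linearly. The one point deserving a line of care is the Leibniz rule for $\ast_t$ (condition $(i)$ in the definition of a left-symmetric algebroid): one must confirm that its symbol is the full series $a_t$ rather than just its leading term. This is immediate once one notes that the symbol assignment is $\Real[[t]]$-linear, so $\sigma_{\ast_t}=\sum_{i\geq0}\sigma_{\mu_i}t^i=\sum_{i\geq0}(a_\huaA\circ\huaT_i)t^i=a_\huaA\circ T_t=a_t$; alternatively one simply reads off condition $(i)$ coefficient by coefficient from the corresponding identities for each $\mu_i$.
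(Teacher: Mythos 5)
Your proposal is correct and follows essentially the same route as the paper: the paper's proof likewise applies Corollary~\ref{cor:construction left-symmetrci algebroid} to $T_t$ over the $\Real[[t]]$-extended \LP pair and then notes that the result reduces to $(E,\ast_T,a_T)$ at $t=0$. Your additional checks (that each $\mu_i=\Phi(\huaT_i)$ lies in $\Der^1(E)$ with symbol $a_\huaA\circ\huaT_i$, and that the symbol of $\ast_t$ is the full series $a_t$) are details the paper leaves implicit, and they are carried out correctly.
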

\begin{proof}
Since $T_t$ is a formal deformation of the relative Rota--Baxter operator $T$, by Corollary~\ref{cor:construction left-symmetrci algebroid}, $(E\otimes \Real[[t]],\ast_t,a_t=a_\huaA\circ T_t)$ is a left-symmetric algebroid. Note that $(E\otimes \Real[[t]],\ast_t,a_t)_{t=0}\allowbreak=(E, \ast_{T}, a_T)$. Thus $(E\otimes \Real[[t]],\ast_t,a_t)$ is a formal deformation of the left-symmetric algebroid $(E, \ast_{T}, a_T)$.
\end{proof}

\section[Maurer--Cartan characterizations and cohomology of Koszul--Vinberg structures on left-symmetric algebroids]
{Maurer--Cartan characterizations and cohomology \\of Koszul--Vinberg structures on left-symmetric algebroids}\label{sec:Cohomology-KV-structure}
In this section, we apply the controlling graded Lie algebra associated to relative Rota--Baxter operators to construct a graded Lie algebra whose Maurer--Cartan elements are precisely Koszul--Vinberg structures. Then we use this graded Lie algebra to study deformations of Koszul--Vinberg structures.

\subsection{Maurer--Cartan characterizations of Koszul--Vinberg structures}
Let us first recall the cochain complex of a left-symmetric algebroid with coefficients in the trivial representation. See \cite{LiuShengBaiChen} for the general theory of cohomology of left-symmetric algebroids. Let $(\huaA,\ast_\huaA,a_\huaA)$ be a left-symmetric algebroid. The set of $n$-cochains is given by
\begin{gather*}
C^{n}(\huaA)=\Gamma\big({\wedge}^{n-1}\huaA^*\otimes \huaA^*\big),\qquad
n\geq1.
\end{gather*}
For all $\varphi\in C^{n}(\huaA)$ and $x_i\in \Gamma(\huaA)$, $i=1,\dots,n+1$, the coboundary operator
$\delta_\huaA$ is given by
 \begin{align}
\delta_\huaA\varphi(x_1,\dots,x_{n+1})
 &=\sum_{i=1}^{n}(-1)^{i+1}a_\huaA(x_i)\varphi(x_1,\dots,\hat{x_i},\dots,x_{n+1})\nonumber
 \\
&\phantom{=}-\sum_{i=1}^{n}(-1)^{i+1}\varphi(x_1,\dots,\hat{x_i},\dots,x_n,x_i\ast_\huaA x_{n+1})\nonumber
\\
&\phantom{=}+\sum_{1\leq i<j\leq n}(-1)^{i+j}\varphi([x_i,x_j]_\huaA,x_1,\dots,\hat{x_i},\dots,\hat{x_j},\dots,x_{n+1}).
\label{LSCA cohomology}
\end{align}

Let $(\huaA,\ast_\huaA,a_\huaA)$ be a left-symmetric algebroid. Define
\begin{gather*}
\Sym^2(\huaA)=\{H\in \huaA\otimes \huaA| H(\alpha,\beta)=H(\beta,\alpha),\,
\forall\alpha,\beta\in \Gamma(\huaA^*)\}.
\end{gather*}
For any $H\in \Sym^2(\huaA)$, the bundle map $H^\sharp\colon \huaA^*\longrightarrow \huaA$ is given by $H^\sharp(\alpha)(\beta)=H(\alpha,\beta)$.
In \cite{lsb2}, the authors introduced $[ H,H]\in\Gamma\big({\wedge}^2\huaA \otimes \huaA\big) $ as follows
\begin{align}
[H,H](\alpha_1,\alpha_2,\alpha_3)&=a_\huaA(H^\sharp(\alpha_1))\langle H^\sharp(\alpha_2),\alpha_3\rangle-a_\huaA(H^\sharp(\alpha_2))\langle H^\sharp(\alpha_1),\alpha_3\rangle\nonumber
\\
&\phantom{=}+\langle \alpha_1,H^\sharp(\alpha_2)\ast_\huaA H^\sharp(\alpha_3)\rangle-\langle\alpha_2,H^\sharp(\alpha_1)\ast_\huaA H^\sharp(\alpha_3)\rangle
\nonumber
\\
&\phantom{=}-\langle \alpha_3,[H^\sharp(\alpha_1),
H^\sharp(\alpha_2)]_\huaA\rangle,\label{brac2}
\end{align}
for all $\alpha_1,\alpha_2,\alpha_3\in\Gamma(\huaA^*)$. Suppose that $H^\sharp\colon \huaA^*\longrightarrow \huaA$ is nondegenerate. Then $(H^\sharp)^{-1}\colon \huaA\longrightarrow \huaA^*$ is also a symmetric bundle map, which gives rise to an element, denoted by $H^{-1}$, in $\Sym^2(\huaA^*)$.

\begin{pro}[\cite{lsb2}]
Let $(\huaA,\ast_\huaA,a_\huaA)$ be a left-symmetric algebroid and $H\in \Sym^2(\huaA)$. If $H$ is nondegenerate, then
 $[ H,H]=0$ if and only if $\delta_\huaA (H^{-1})=0,$ i.e. $H^{-1}$ is a $2$-cocycle on the left-symmetric algebroid $\huaA.$

\end{pro}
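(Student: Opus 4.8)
The plan is to unwind both sides directly. The key observation is the relation between the cohomology of the left-symmetric algebroid and the bracket $[H,H]$ via the nondegeneracy of $H^\sharp$. Write $\Psi = (H^\sharp)^{-1}\colon \huaA\longrightarrow\huaA^*$, so that $H^{-1}\in\Sym^2(\huaA^*)$ is the $2$-cochain given by $H^{-1}(x,y)=\langle\Psi(x),y\rangle$ for $x,y\in\Gamma(\huaA)$; note $H^{-1}\in C^2(\huaA)$ in the notation of \eqref{LSCA cohomology}. I~would fix $\alpha_i\in\Gamma(\huaA^*)$ and set $x_i:=H^\sharp(\alpha_i)\in\Gamma(\huaA)$, $i=1,2,3$. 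Since $H^\sharp$ is a bundle isomorphism, the $\alpha_i$ range over all of $\Gamma(\huaA^*)$ precisely when the $x_i$ range over all of $\Gamma(\huaA)$, so it suffices to compare $[H,H](\alpha_1,\alpha_2,\alpha_3)$ with $\delta_\huaA(H^{-1})(x_1,x_2,x_3)$ and show that, up to an overall nonzero sign, they agree identically as functions.

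First I~would rewrite the five terms of \eqref{brac2} under the substitution $\alpha_i=\Psi(x_i)$: the term $a_\huaA(H^\sharp(\alpha_1))\langle H^\sharp(\alpha_2),\alpha_3\rangle$ becomes $a_\huaA(x_1)\langle x_2,\Psi(x_3)\rangle = a_\huaA(x_1) H^{-1}(x_2,x_3)$, and similarly for the second term with $1\leftrightarrow2$; the term $\langle\alpha_1, H^\sharp(\alpha_2)\ast_\huaA H^\sharp(\alpha_3)\rangle$ becomes $\langle\Psi(x_1), x_2\ast_\huaA x_3\rangle = H^{-1}(x_1, x_2\ast_\huaA x_3)$, and similarly the fourth term; the last term becomes $-\langle\Psi(x_3),[x_1,x_2]_\huaA\rangle = -H^{-1}([x_1,x_2]_\huaA, x_3) = -H^{-1}(x_3,[x_1,x_2]_\huaA)$ using symmetry of $H^{-1}$. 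On the other side, I~would expand $\delta_\huaA(H^{-1})(x_1,x_2,x_3)$ from \eqref{LSCA cohomology} with $n=2$: this produces $a_\huaA(x_1)H^{-1}(x_2,x_3) - a_\huaA(x_2)H^{-1}(x_1,x_3)$, minus $\bigl(H^{-1}(x_2, x_1\ast_\huaA x_3) - H^{-1}(x_1, x_2\ast_\huaA x_3)\bigr)$ coming from the $i=1,2$ terms of the second sum, plus $H^{-1}([x_1,x_2]_\huaA, x_3)$ from the double sum. Term-by-term matching then shows $[H,H](\alpha_1,\alpha_2,\alpha_3) = \delta_\huaA(H^{-1})(x_1,x_2,x_3)$ (the signs are designed to make this an exact equality, not merely up to sign). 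Since the $x_i$ are arbitrary, $[H,H]=0 \iff \delta_\huaA(H^{-1})=0$, which is the claim.

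The only genuinely nontrivial point — and the step I~expect to be the main obstacle in a fully rigorous write-up — is the bookkeeping for skew-symmetry and the $C^\infty(M)$-tensoriality: one must check that $[H,H]$ as defined by \eqref{brac2} is actually skew-symmetric in $\alpha_1,\alpha_2$ (and tensorial, so that it lies in $\Gamma(\wedge^2\huaA\otimes\huaA)$) and that $\delta_\huaA(H^{-1})$ is skew-symmetric in its first two arguments as required by \eqref{LSCA cohomology}; both rely on the left-symmetry identity and on the compatibility of the anchor with the flat structure. In particular the appearance of $a_\huaA$ acting on functions in both formulas must be reconciled, which uses that $\langle\Psi(x),y\rangle$ transforms correctly under $y\mapsto fy$ because $\Psi$ is $C^\infty(M)$-linear while $H^\sharp$ intertwines the derivation property of $a_\huaA$. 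Once these structural checks are in place, the identification above is a purely formal term-matching and the proposition follows; I~would present it as "It follows from a direct calculation" after displaying the substitution and the expanded forms of the two sides.
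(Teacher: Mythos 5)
The paper itself offers no proof of this proposition --- it is imported from \cite{lsb2} --- so the comparison here is with the natural direct verification, which is exactly what you carry out, and your argument is essentially correct. Substituting $x_i=H^\sharp(\alpha_i)$ and using $H^{-1}(x_i,x_j)=\langle\alpha_i,H^\sharp(\alpha_j)\rangle=\langle H^\sharp(\alpha_i),\alpha_j\rangle$ does turn the five terms of \eqref{brac2} into the five terms of $\delta_\huaA(H^{-1})(x_1,x_2,x_3)$, and surjectivity of $H^\sharp$ on sections then gives the equivalence. One slip to fix: in your expansion of \eqref{LSCA cohomology} with $n=2$ the double sum contributes $(-1)^{1+2}H^{-1}([x_1,x_2]_\huaA,x_3)=-H^{-1}([x_1,x_2]_\huaA,x_3)$, not $+H^{-1}([x_1,x_2]_\huaA,x_3)$ as you wrote; with the correct minus sign this term matches $-\langle\alpha_3,[H^\sharp(\alpha_1),H^\sharp(\alpha_2)]_\huaA\rangle$ in \eqref{brac2} and the identity $[H,H](\alpha_1,\alpha_2,\alpha_3)=\delta_\huaA(H^{-1})(x_1,x_2,x_3)$ holds exactly, as you claim. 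Your closing worry about skew-symmetry and tensoriality is largely a non-issue for this proposition: $C^2(\huaA)=\Gamma(\huaA^*\otimes\huaA^*)$ imposes no skew-symmetry on a $2$-cochain, so the symmetric tensor $H^{-1}$ is an admissible cochain as it stands, and the facts that $[H,H]\in\Gamma\big({\wedge}^2\huaA\otimes\huaA\big)$ and that $\delta_\huaA$ maps $C^2(\huaA)$ to $C^3(\huaA)$ are part of the ambient setup rather than something this proof must re-establish.
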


Recall that a {\it pseudo-Hessian metric} $g$ is a
pseudo-Riemannian metric $g$ on a flat manifold $(M,\nabla)$ such
that $g$ can be locally expressed by
$g_{ij}=\frac{\partial^2\varphi}{\partial x^i\partial x^j},$ where
$\varphi\in\CWM$ and $\big\{x^1,\dots,x^n\big\}$ is an affine coordinate
system with respect to $\nabla$. Then the pair $(\nabla,g)$ is called a
pseudo-Hessian structure on $M$. A manifold $M$ with a
pseudo-Hessian structure $(\nabla,g)$ is called a {\it pseudo-Hessian manifold.}
 See~\cite{Geometry-of-Hessian-structures} for more
details about pseudo-Hessian manifolds.
 Let $(M,\nabla)$ be a flat manifold and $g$ a pseudo-Riemannian metric on $M$. Then $(M,\nabla,g)$ is a pseudo-Hessian manifold if and only if $\delta_{T_\nabla M} g=0$, where $\delta_{T_\nabla M}$ is the coboundary operator given by \eqref{LSCA cohomology} associated to the left-symmetric algebroid $T_\nabla M $ given in Example \ref{ex:main}.

Now we give the main structure studied in this section.

\begin{defi}
Let $(\huaA,\ast_\huaA,a_\huaA)$ be a left-symmetric algebroid.
 \begin{itemize}\itemsep=0pt
 \item[$(i)$] If $H\in \Sym^2(\huaA)$ satisfies $[ H,H]=0$, then $H$ is called a {\it Koszul--Vinberg structure} on the left-symmetric algebroid $\huaA$;
\item[$(ii)$] If $\frkB\in \Sym^2(\huaA^*)$ is nondegenerate and satisfies $\delta_\huaA\frkB=0$, then $\frkB$ is called a {\it pseudo-Hessian structure} on the left-symmetric algebroid $\huaA$.
 \end{itemize}\end{defi}

Let $(\huaA,\ast_\huaA,a_\huaA)$ be a left-symmetric algebroid, and $H\in \Sym^2(\huaA)$. Define
\begin{gather}\label{eq:multiplication-H}
\alpha\ast_{H^\sharp} \beta=\huaL_{H^\sharp(\alpha)}\beta-R_{H^\sharp(\beta)}\alpha-{\dM_\huaA} (H(\alpha,\beta)), \qquad\forall
\alpha,\beta\in\Gamma(\huaA^*),
\end{gather}
where $\huaL$ is the Lie derivation of the sub-adjacent Lie algebroid $\huaA^c$, $R$ and $\dM_\huaA$ are given by
\begin{gather*}
\langle R_x\alpha,y\rangle=-\langle\alpha,y\ast_\huaA x\rangle,\qquad
\dM_\huaA f(x)=a_\huaA(x)f,\qquad
\forall x,y\in\Gamma(\huaA),\quad f\in \CWM.
\end{gather*}

{\sloppy\begin{thm}[\cite{lsb2}]\label{thm:LSBi-H}
If $H$ is a Koszul--Vinberg structure on a left-symmetric algebroid $(\huaA,\ast_\huaA,a_\huaA)$, then $(\huaA^*,\ast_{H^\sharp},a_{H^\sharp}=a_\huaA\circ H^\sharp)$ is a left-symmetric algebroid, and $H^\sharp$ is a left-symmetric algebroid homomorphism from $(\huaA^*,\ast_{H^\sharp},a_{H^\sharp})$ to $(\huaA,\ast_\huaA,a_\huaA)$.
\end{thm}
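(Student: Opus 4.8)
The plan is to recover this as an application of the relative Rota--Baxter operator machinery, via the observation that a Koszul--Vinberg structure is precisely such an operator. First I would reformulate the defining condition $[H,H]=0$. Pairing the right-hand side of \eqref{brac2} with the covector $\alpha_3$ and, term by term, transferring $H^\sharp$ onto the $\alpha_3$-slot by means of the duality relations \eqref{eq:dualLR}, one sees that the two anchor contributions cancel against the anchor contributions produced when the $\ast_\huaA$-terms are rewritten, so that \eqref{brac2} collapses to
\begin{gather*}
[H^\sharp(\alpha_1),H^\sharp(\alpha_2)]_\huaA=H^\sharp\big(\rho(H^\sharp(\alpha_1))\alpha_2-\rho(H^\sharp(\alpha_2))\alpha_1\big),\qquad\alpha_1,\alpha_2\in\Gamma(\huaA^*),
\end{gather*}
where $\rho$ is the dual of the representation $L$ of the sub-adjacent Lie algebroid $\huaA^c$ on $\huaA$, i.e.\ $\langle\rho(x)\xi,y\rangle=a_\huaA(x)\langle\xi,y\rangle-\langle\xi,x\ast_\huaA y\rangle$ for $x,y\in\Gamma(\huaA)$ and $\xi\in\Gamma(\huaA^*)$. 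Thus $[H,H]=0$ is exactly the statement that $H^\sharp\colon\huaA^*\to\huaA$ is a relative Rota--Baxter operator on the \LP pair $(\huaA^c;\rho)$ (this is Proposition~\ref{pro:LSBi-H}).

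Granting this, Corollary~\ref{cor:construction left-symmetrci algebroid} applies and yields that $\big(\huaA^*,\star,a_\huaA\circ H^\sharp\big)$ is a left-symmetric algebroid with $\alpha\star\beta=\rho(H^\sharp(\alpha))\beta$. Two points remain. First, one must identify $\star$ with the multiplication \eqref{eq:multiplication-H}: writing $\rho=\huaL-R$ on $\Gamma(\huaA^*)$, where $\huaL$ is the Lie derivative of $\huaA^c$ and $R$ is as in \eqref{eq:multiplication-H}, and re-expanding with \eqref{eq:dualLR} and $[x,y]_\huaA=x\ast_\huaA y-y\ast_\huaA x$, the residual term $a_\huaA(\,\cdot\,)\langle H^\sharp(\alpha),\beta\rangle$ is precisely $\dM_\huaA(H(\alpha,\beta))$, whence $\alpha\star\beta$ is the right-hand side of \eqref{eq:multiplication-H}. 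Second, $H^\sharp$ is a left-symmetric algebroid homomorphism onto $(\huaA,\ast_\huaA,a_\huaA)$: the anchor identity $a_{H^\sharp}=a_\huaA\circ H^\sharp$ holds by the definition of the induced anchor, and $H^\sharp(\alpha\ast_{H^\sharp}\beta)=H^\sharp(\alpha)\ast_\huaA H^\sharp(\beta)$ follows by expanding the left-hand side with \eqref{eq:dualLR}, the identity $[x,y]_\huaA=x\ast_\huaA y-y\ast_\huaA x$ and the symmetry of $H$; the difference of the two sides reduces, after the cancellations, to the pairing of $[H,H]$ against a covector, and hence vanishes.

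The main obstacle is precisely the bookkeeping in these two identifications: the operators $\huaL$, $R$, $\rho$ and $\dM_\huaA$ on $\Gamma(\huaA^*)$ carry competing sign conventions, and the cancellations producing \eqref{eq:multiplication-H} and the homomorphism identity have to be organised carefully. Once a convention is fixed at the outset, each of the three ingredients---the relative Rota--Baxter reformulation of $[H,H]=0$, the identification of the multiplication, and the homomorphism property---is a finite and essentially routine computation. Alternatively one can bypass the machinery: verify the Leibniz rules and the left-symmetry of \eqref{eq:multiplication-H} directly from $[H,H]=0$, observing that when $H$ is nondegenerate the left-symmetry is automatic, since then $H^\sharp$ is an isomorphism transporting $\ast_\huaA$ onto $\ast_{H^\sharp}$.
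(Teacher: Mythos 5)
Your reduction to the relative Rota--Baxter framework is fine up to and including Proposition~\ref{pro:LSBi-H}: with $\rho=L$ as in \eqref{eq:dualLR}, the identity of Proposition~\ref{pro:morphism} shows that $[H,H]=0$ is exactly the statement that $H^\sharp$ is a relative Rota--Baxter operator on $(\huaA^c;L)$, and Corollary~\ref{cor:construction left-symmetrci algebroid} then equips $\huaA^*$ with the left-symmetric product $\alpha\star\beta=L_{H^\sharp(\alpha)}\beta$. The gap is in your identification step: this $\star$ is \emph{not} the product $\ast_{H^\sharp}$ of \eqref{eq:multiplication-H}. Since $\huaL_x=L_x-R_x$ on $\Gamma(\huaA^*)$, one has $\alpha\star\beta=\huaL_{H^\sharp(\alpha)}\beta+R_{H^\sharp(\alpha)}\beta$, whereas \eqref{eq:multiplication-H} reads $\huaL_{H^\sharp(\alpha)}\beta-R_{H^\sharp(\beta)}\alpha-\dM_\huaA(H(\alpha,\beta))$; pairing the difference with $y\in\Gamma(\huaA)$ gives $\langle\alpha,\,H^\sharp(L_y\beta)-y\ast_\huaA H^\sharp(\beta)\rangle$, which does not vanish as a consequence of $[H,H]=0$. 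A one-dimensional counterexample: take $e\ast_\huaA e=e$ and $H=c\,e\otimes e$ (any symmetric $H$ is Koszul--Vinberg here); then $e^*\star e^*=-c\,e^*$ while $e^*\ast_{H^\sharp}e^*=+c\,e^*$. The two products share only their commutator, which is precisely what the paper records when it introduces $(\huaA^*,\cdot_{H^\sharp},a_{H^\sharp})$ as a \emph{second} left-symmetric algebroid and remarks that it merely has the same sub-adjacent Lie algebroid as $(\huaA^*,\ast_{H^\sharp},a_{H^\sharp})$. For the same reason the homomorphism claim does not come for free: the Rota--Baxter identity controls the Lie brackets, not the left-symmetric products, and in the example above $H^\sharp(\alpha\star\beta)=-c^2e\neq c^2e=H^\sharp(\alpha)\ast_\huaA H^\sharp(\beta)$, while the identity does hold for $\ast_{H^\sharp}$. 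So the theorem genuinely concerns a different left-symmetric structure than the one your machinery produces.

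Note also that the paper itself offers no proof of this statement --- it is imported from \cite{lsb2} --- so the burden falls entirely on a direct verification of the Leibniz rules, the left-symmetry of \eqref{eq:multiplication-H}, and the homomorphism identity from $[H,H]=0$, which is only your final ``alternatively'' sentence. Even there, the argument that left-symmetry is ``automatic'' for nondegenerate $H$ because $H^\sharp$ transports $\ast_\huaA$ to $\ast_{H^\sharp}$ presupposes the homomorphism property (part of what is to be proved) and in any case does not cover degenerate $H$, for which the theorem is also asserted.
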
}

{\sloppy The sub-adjacent Lie algebroid of the left-symmetric algebroid $(\huaA^*,\ast_{H^\sharp},a_{H^\sharp})$ is $(\huaA^*,[\cdot,\cdot]_{H^\sharp},a_{H^\sharp})$, where $[\cdot,\cdot]_{H^\sharp}$ is given by
\begin{gather}\label{eq:commutator-H}
 [\alpha,\beta]_{H^\sharp}=L_{H^\sharp(\alpha)}\beta-L_{H^\sharp(\beta)}\alpha,\qquad
 \forall \alpha,\beta\in\Gamma(\huaA^*),
\end{gather}}\noindent
where $L$ is given by \eqref{eq:dualLR}.

\begin{pro}[\cite{lsb2}]\label{pro:morphism}
With the above notations,
for all $\alpha,\beta\in\Gamma(\huaA^*)$, we have
\begin{gather*}
H^\sharp({[\alpha,\beta]}_{H^\sharp})-[H^\sharp(\alpha), H^\sharp(\beta)]_\huaA=[ H,H](\alpha,\beta,\cdot).
\end{gather*}
\end{pro}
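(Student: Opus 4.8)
The plan is to verify the identity by pairing both sides with an arbitrary section $\gamma\in\Gamma(\huaA^*)$. Since both $H^\sharp([\alpha,\beta]_{H^\sharp})-[H^\sharp(\alpha),H^\sharp(\beta)]_\huaA$ and $[H,H](\alpha,\beta,\cdot)$ are sections of $\huaA$, it suffices to show that $\langle\gamma,\,H^\sharp([\alpha,\beta]_{H^\sharp})-[H^\sharp(\alpha),H^\sharp(\beta)]_\huaA\rangle=[H,H](\alpha,\beta,\gamma)$ for all $\gamma$. The elementary fact I will use repeatedly is the symmetry of $H$: from $H^\sharp(\alpha)(\beta)=H(\alpha,\beta)=H(\beta,\alpha)$ one gets $\langle\gamma,H^\sharp(\eta)\rangle=\langle\eta,H^\sharp(\gamma)\rangle$ for all $\eta,\gamma\in\Gamma(\huaA^*)$; in particular $\langle\gamma,H^\sharp([\alpha,\beta]_{H^\sharp})\rangle=\langle[\alpha,\beta]_{H^\sharp},H^\sharp(\gamma)\rangle$.

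Next I would substitute the definition \eqref{eq:commutator-H} of $[\alpha,\beta]_{H^\sharp}=L_{H^\sharp(\alpha)}\beta-L_{H^\sharp(\beta)}\alpha$ and expand using the formula \eqref{eq:dualLR} for the dual operator on $\huaA^*$, namely $\langle L_x\xi,y\rangle=a_\huaA(x)\langle\xi,y\rangle-\langle\xi,x\ast_\huaA y\rangle$. With $x=H^\sharp(\alpha)$, $\xi=\beta$, $y=H^\sharp(\gamma)$ this gives
\[
\langle L_{H^\sharp(\alpha)}\beta,H^\sharp(\gamma)\rangle=a_\huaA(H^\sharp(\alpha))\langle\beta,H^\sharp(\gamma)\rangle-\langle\beta,H^\sharp(\alpha)\ast_\huaA H^\sharp(\gamma)\rangle,
\]
together with the analogous expression obtained by interchanging $\alpha$ and $\beta$ for the second summand of $[\alpha,\beta]_{H^\sharp}$.

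Finally I would collect everything. Rewriting the anchor terms via the symmetry of $H$ as $\langle\beta,H^\sharp(\gamma)\rangle=\langle H^\sharp(\beta),\gamma\rangle$ and $\langle\alpha,H^\sharp(\gamma)\rangle=\langle H^\sharp(\alpha),\gamma\rangle$, the two anchor contributions reproduce exactly the first line of \eqref{brac2}; the two $\ast_\huaA$-contributions reproduce the second line of \eqref{brac2} (note $-\langle\beta,H^\sharp(\alpha)\ast_\huaA H^\sharp(\gamma)\rangle+\langle\alpha,H^\sharp(\beta)\ast_\huaA H^\sharp(\gamma)\rangle$ matches with the right signs); and the remaining term $-\langle\gamma,[H^\sharp(\alpha),H^\sharp(\beta)]_\huaA\rangle$ coming from the second summand on the left-hand side is literally the last term of \eqref{brac2}. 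Hence the two scalars agree for every $\gamma\in\Gamma(\huaA^*)$, which yields the claimed identity.

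I expect no genuine obstacle here: the argument is a bookkeeping computation. The only points requiring care are keeping the signs straight, applying the symmetry of $H$ consistently to move $H^\sharp$ between the two slots of $\langle\cdot,\cdot\rangle$, and not confusing the left-symmetric multiplication $\ast_\huaA$ and its left-multiplication operator $L$ with the sub-adjacent Lie bracket $[\cdot,\cdot]_\huaA$ and its Lie derivative $\huaL$. It is worth emphasizing that no hypothesis on $H$ enters (in particular $H$ need not be a Koszul--Vinberg structure): this is a purely algebraic identity, and it is precisely this identity that specializes, once $[H,H]=0$, to the morphism statement of Theorem~\ref{thm:LSBi-H}.
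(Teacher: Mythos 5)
Your verification is correct: pairing with an arbitrary $\gamma$, using the symmetry of $H$ to write $\langle\beta,H^\sharp(\gamma)\rangle=\langle H^\sharp(\beta),\gamma\rangle$, and expanding $[\alpha,\beta]_{H^\sharp}$ via \eqref{eq:commutator-H} and \eqref{eq:dualLR} reproduces the five terms of \eqref{brac2} with the correct signs. The paper itself gives no proof (the proposition is quoted from~\cite{lsb2}), so there is nothing to compare against; your closing remark that the identity holds for arbitrary $H\in\Sym^2(\huaA)$ without assuming $[H,H]=0$ is also accurate, and is in fact what makes the ``if and only if'' in Proposition~\ref{pro:LSBi-H} work.
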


Note that $L\colon \huaA\longrightarrow \frkD(\huaA^*)$ is a representation of the sub-adjacent Lie algebroid $\huaA^c$ on the dual bundle $\huaA^*$. Thus, by Proposition \ref{pro:morphism}, we have
\begin{pro}\label{pro:LSBi-H}
 $H$ is a Koszul--Vinberg structure on a left-symmetric algebroid $(\huaA,\ast_\huaA,a_\huaA)$ if and only if $H^\sharp\colon \huaA^*\longrightarrow \huaA $ is a relative Rota--Baxter operator on the \LP pair $(\huaA^c;L)$.
\end{pro}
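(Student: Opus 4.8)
The plan is to reduce the claimed equivalence, via the definition of a relative Rota--Baxter operator, to the single identity already recorded in Proposition~\ref{pro:morphism}. First I would spell out what it means for the bundle map $H^\sharp\colon \huaA^*\longrightarrow \huaA$ to be a relative Rota--Baxter operator on the \LP pair $(\huaA^c;L)$: taking $E=\huaA^*$, $T=H^\sharp$ and $\rho=L$ in the defining relation, this is precisely the requirement
\begin{gather*}
[H^\sharp(\alpha),H^\sharp(\beta)]_\huaA=H^\sharp\big(L_{H^\sharp(\alpha)}\beta-L_{H^\sharp(\beta)}\alpha\big),\qquad\forall\,\alpha,\beta\in\Gamma(\huaA^*),
\end{gather*}
where $[\cdot,\cdot]_\huaA$ is the commutator bracket of the sub-adjacent Lie algebroid $\huaA^c$ and $L$ is the representation of $\huaA^c$ on $\huaA^*$ defined in \eqref{eq:dualLR}.

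Next I would observe that by \eqref{eq:commutator-H} the argument on the right-hand side, $L_{H^\sharp(\alpha)}\beta-L_{H^\sharp(\beta)}\alpha$, is by definition $[\alpha,\beta]_{H^\sharp}$, so the relative Rota--Baxter condition is equivalent to
\begin{gather*}
H^\sharp\big([\alpha,\beta]_{H^\sharp}\big)-[H^\sharp(\alpha),H^\sharp(\beta)]_\huaA=0,\qquad\forall\,\alpha,\beta\in\Gamma(\huaA^*).
\end{gather*}
By Proposition~\ref{pro:morphism}, the left-hand side equals $[H,H](\alpha,\beta,\cdot)$. Since $\alpha,\beta$ range over all of $\Gamma(\huaA^*)$ and $[H,H]\in\Gamma\big({\wedge}^2\huaA\otimes\huaA\big)$, this expression vanishes for every pair $\alpha,\beta$ if and only if $[H,H]=0$, that is, if and only if $H$ is a Koszul--Vinberg structure on $(\huaA,\ast_\huaA,a_\huaA)$. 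This would complete the proof.

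I do not expect a serious obstacle here, since the analytic content has been front-loaded into Proposition~\ref{pro:morphism} together with the explicit form \eqref{eq:commutator-H} of the bracket $[\cdot,\cdot]_{H^\sharp}$. The only points that warrant a line of care are bookkeeping: checking that the $L$ appearing in \eqref{eq:commutator-H} is exactly the representation used to form the \LP pair $(\huaA^c;L)$ (it is, being the dual of $L\colon\huaA\longrightarrow\frkD(\huaA)$), and that the ambient bracket in the relative Rota--Baxter relation is the commutator of $\ast_\huaA$ defining $\huaA^c$. Once these identifications are in place the equivalence is immediate.
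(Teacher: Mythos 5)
Your proposal is correct and follows exactly the route the paper takes: the paper deduces Proposition~\ref{pro:LSBi-H} directly from Proposition~\ref{pro:morphism} after noting that $L\colon\huaA\longrightarrow\frkD(\huaA^*)$ is a representation of $\huaA^c$, which is precisely your reduction of the relative Rota--Baxter identity to the vanishing of $[H,H](\alpha,\beta,\cdot)$. The only difference is that you spell out the bookkeeping (the identification of $L_{H^\sharp(\alpha)}\beta-L_{H^\sharp(\beta)}\alpha$ with $[\alpha,\beta]_{H^\sharp}$ via \eqref{eq:commutator-H}) that the paper leaves implicit.
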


By Theorem \ref{thm:graded Lie algebra} and Proposition \ref{pro:LSBi-H}, we have
\begin{lem}\label{lem:graded Lie algbra}
Let $(\huaA,\ast_\huaA,a_\huaA)$ be a left-symmetric algebroid and $H\in \Sym^2(\huaA)$.	
\begin{itemize}\itemsep=0pt
\item[$(i)$] $\big(\huaC^*(\huaA^*,\huaA):=\oplus_{k\geq0}\Gamma\big(\Hom\big({\wedge}^{k}\huaA^*,\huaA\big)\big),\Courant{\cdot,\cdot}\big)$ is a graded Lie algebra, where the bracket $\Courant{\cdot,\cdot}$ is given by \eqref{eq:graded Lie bracket}, in which $\rho=L$ is given by \eqref{eq:dualLR}.
\item[$(ii)$] $H$ is a Koszul--Vinberg structure on the left-symmetric algebroid if and only if $H^\sharp$ is a~Maurer--Cartan element of the graded Lie algebra $(\huaC^*(\huaA^*,\huaA),\Courant{\cdot,\cdot})$.
\end{itemize}
\end{lem}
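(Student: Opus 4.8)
The plan is to deduce both parts from results already established in the excerpt, since Lemma~\ref{lem:graded Lie algbra} is precisely the specialization of Theorem~\ref{thm:graded Lie algebra} and Proposition~\ref{pro:LSBi-H} to a particular \LP pair. First I would record the ingredients: by the proposition preceding Example~\ref{ex:main}, the sub-adjacent bracket $[x,y]_\huaA=x\ast_\huaA y-y\ast_\huaA x$ makes $\huaA^c=(\huaA,[\cdot,\cdot]_\huaA,a_\huaA)$ a Lie algebroid and $L$ a representation of $\huaA^c$; together with the discussion preceding Proposition~\ref{pro:LSBi-H}, $L\colon\huaA\longrightarrow\frkD(\huaA^*)$ is in fact a representation on the dual bundle $\huaA^*$. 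Hence $(\huaA^c,[\cdot,\cdot]_\huaA,a_\huaA;L)$ is a \LP pair in the sense of the earlier definition, with the r\^ole of $E$ played by $\huaA^*$.

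For part $(i)$, I would simply invoke Theorem~\ref{thm:graded Lie algebra} applied to this \LP pair: the graded vector space $\huaC^*(\huaA^*,\huaA)=\oplus_{k\geq0}\Gamma(\Hom(\wedge^k\huaA^*,\huaA))$ equipped with the bracket \eqref{eq:graded Lie bracket}, in which the representation symbol $\rho$ is taken to be $L$ given by \eqref{eq:dualLR}, is a graded Lie algebra; this is exactly the assertion of $(i)$. Theorem~\ref{thm:graded Lie algebra} moreover identifies the Maurer--Cartan elements of this graded Lie algebra with the relative Rota--Baxter operators on the \LP pair $(\huaA^c;L)$, i.e., with the bundle maps $\huaA^*\longrightarrow\huaA$ satisfying the defining identity of a relative Rota--Baxter operator with respect to $L$.

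For part $(ii)$, note that any $H\in\Sym^2(\huaA)$ gives a bundle map $H^\sharp\colon\huaA^*\longrightarrow\huaA$, hence an element of the degree-$1$ piece $\huaC^1(\huaA^*,\huaA)=\Gamma(\Hom(\huaA^*,\huaA))$, which is precisely where a Maurer--Cartan element must live. By Proposition~\ref{pro:LSBi-H}, $H$ is a Koszul--Vinberg structure on $(\huaA,\ast_\huaA,a_\huaA)$ if and only if $H^\sharp$ is a relative Rota--Baxter operator on the \LP pair $(\huaA^c;L)$; combining this with the Maurer--Cartan characterization from part $(i)$ yields $(ii)$.

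Since everything is obtained by specializing theorems already proved, there is no substantive obstacle; the only point requiring a little care is checking that the specialization is legitimate --- that $\huaA^c$ really is a Lie algebroid, that $L$ really is a representation of $\huaA^c$ on $\huaA^*$ (so that $(\huaA^c;L)$ is a genuine \LP pair and the bracket \eqref{eq:graded Lie bracket} with $\rho=L$ makes sense), and that the symmetric tensor $H$ is faithfully encoded by the bundle map $H^\sharp\in\huaC^1(\huaA^*,\huaA)$. Each of these is either explicitly recorded earlier in the excerpt or immediate from the definitions, so the proof reduces to assembling these citations.
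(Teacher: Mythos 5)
Your proposal is correct and matches the paper's own argument: the lemma is stated there as an immediate consequence of Theorem~\ref{thm:graded Lie algebra} applied to the \LP pair $(\huaA^c;L)$ together with Proposition~\ref{pro:LSBi-H}, exactly as you do. Your extra care in verifying that $(\huaA^c;L)$ is a genuine \LP pair and that $H^\sharp$ lands in $\huaC^1(\huaA^*,\huaA)$ is sound and only makes the specialization more explicit.
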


For $k\geq0$, define $\Psi\colon \Gamma\big({\wedge}^{k}\huaA\otimes \huaA\big)\longrightarrow\huaC^k(\huaA^*,\huaA) $ by
\begin{gather}\label{eq:relation-coboundary} \langle\Psi(\varphi)(\alpha_1,\dots,\alpha_k),\alpha_{k+1}\rangle =\langle\varphi,\alpha_1\wedge\cdots\wedge\alpha_k\otimes\alpha_{k+1}\rangle,\qquad
\forall \alpha_1,\dots,\alpha_{k+1}\in\Gamma(\huaA^*),
\end{gather}
and $\Upsilon\colon \huaC^k(\huaA^*,\huaA) \longrightarrow \Gamma\big({\wedge}^{k}\huaA\otimes \huaA\big)$ by
\begin{gather*}
\langle \Upsilon(P),\alpha_1\wedge\cdots\wedge\alpha_k\otimes\alpha_{k+1}\rangle=\langle P(\alpha_1,\dots,\alpha_k),\alpha_{k+1}\rangle,\qquad
\forall \alpha_1,\dots,\alpha_{k+1}\in\Gamma(\huaA^*).
\end{gather*}
Obviously we have $\Psi\circ\Upsilon={\Id},~~\Upsilon\circ\Psi={\Id}$.

By Lemma \ref{lem:graded Lie algbra}, we have
\begin{thm}
Let $(\huaA,\ast_\huaA,a_\huaA)$ be a left-symmetric algebroid. Then, there is a graded Lie bracket $\Courant{\cdot,\cdot}_{\rm KV}\colon \Gamma\big({\wedge}^{k}\huaA\otimes \huaA\big)\times \Gamma\big({\wedge}^{l}\huaA\otimes \huaA\big)\longrightarrow \Gamma\big({\wedge}^{k+l}\huaA\otimes \huaA\big)$ on the graded vector space $C_{\rm KV}^{*}(\huaA^*):=\oplus_{k\ge 1}C_{\rm KV}^{k}(\huaA^*)$ with $C_{\rm KV}^{k}(\huaA^*):=\Gamma\big({\wedge}^{k-1}\huaA\otimes \huaA\big)$ given by
\begin{gather*}
\Courant{\varphi,\phi}_{\rm KV}:=\Upsilon\Courant{\Psi(\varphi),\Psi(\phi)},\qquad
\forall \varphi\in\Gamma\big({\wedge}^{k}\huaA\otimes \huaA\big),\quad\phi\in\Gamma\big({\wedge}^{l}\huaA\otimes \huaA\big).
\end{gather*}

Furthermore, $H\in \Sym^2(\huaA)$ is a Koszul--Vinberg structure on the left-symmetric algebroid~$\huaA$ if and only if $H$ is a Maurer--Cartan element of the graded Lie algebra $(C_{\rm KV}^{*}(\huaA^*),\Courant{\cdot,\cdot}_{\rm KV})$. More precisely, we have
 \begin{gather*}
\Courant{H,H}_{\rm KV}(\alpha_1,\alpha_2,\alpha_3)=2[H,H](\alpha_1,\alpha_2,\alpha_3),\qquad \forall \alpha_1,\alpha_2,\alpha_3\in\Gamma(\huaA^*),
 \end{gather*}
 where $[H,H]$ is given by \eqref{brac2}.
\end{thm}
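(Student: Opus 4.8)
The plan is to deduce the whole statement by transporting the graded Lie algebra of Theorem~\ref{thm:graded Lie algebra} (applied to the \LP pair $(\huaA^c;L)$) along the isomorphisms $\Psi$ and $\Upsilon$ from \eqref{eq:relation-coboundary}, so that everything except one short pairing computation becomes purely formal.

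First I would record that $(\huaC^*(\huaA^*,\huaA),\Courant{\cdot,\cdot})$ is a graded Lie algebra by Lemma~\ref{lem:graded Lie algbra}(i) (with $\rho=L$ as in \eqref{eq:dualLR}), and that $\Psi$ and $\Upsilon$ are mutually inverse isomorphisms of graded vector spaces between $C_{\rm KV}^*(\huaA^*)$ and $\huaC^*(\huaA^*,\huaA)$ under the degree relabeling $C_{\rm KV}^{k}(\huaA^*)=\Gamma(\wedge^{k-1}\huaA\otimes\huaA)\cong\huaC^{k-1}(\huaA^*,\huaA)$; this is precisely the content of $\Psi\circ\Upsilon=\Upsilon\circ\Psi=\Id$. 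Since graded skew-symmetry and the graded Jacobi identity pass through a vector-space isomorphism, the bracket $\Courant{\varphi,\phi}_{\rm KV}:=\Upsilon\Courant{\Psi(\varphi),\Psi(\phi)}$ automatically makes $(C_{\rm KV}^*(\huaA^*),\Courant{\cdot,\cdot}_{\rm KV})$ a graded Lie algebra, with $\Psi$ (equivalently $\Upsilon$) an isomorphism of graded Lie algebras. This settles the first assertion.

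Next, for the Maurer--Cartan characterization I would first check the immediate identity $\Psi(H)=H^\sharp$: by \eqref{eq:relation-coboundary} and $H^\sharp(\alpha)(\beta)=H(\alpha,\beta)$ one has $\langle\Psi(H)(\alpha),\beta\rangle=\langle H,\alpha\otimes\beta\rangle=H(\alpha,\beta)=\langle H^\sharp(\alpha),\beta\rangle$. By Proposition~\ref{pro:LSBi-H} and Lemma~\ref{lem:graded Lie algbra}(ii), $H$ is a Koszul--Vinberg structure if and only if $H^\sharp$ is a Maurer--Cartan element of $(\huaC^*(\huaA^*,\huaA),\Courant{\cdot,\cdot})$, that is, $\Courant{H^\sharp,H^\sharp}=0$; applying the isomorphism $\Upsilon$ this is equivalent to $\Courant{H,H}_{\rm KV}=\Upsilon\Courant{H^\sharp,H^\sharp}=0$, i.e., to $H$ being a Maurer--Cartan element of $(C_{\rm KV}^*(\huaA^*),\Courant{\cdot,\cdot}_{\rm KV})$. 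For the explicit formula I would specialize to $T=H^\sharp$ and $\rho=L$ the identity for $\Courant{T,T}$ established in the proof of Theorem~\ref{thm:graded Lie algebra}, namely
\begin{gather*}
\Courant{H^\sharp,H^\sharp}(\alpha_1,\alpha_2)=2\big(H^\sharp(L_{H^\sharp(\alpha_1)}\alpha_2)-H^\sharp(L_{H^\sharp(\alpha_2)}\alpha_1)-[H^\sharp(\alpha_1),H^\sharp(\alpha_2)]_\huaA\big),
\end{gather*}
pair it with $\alpha_3\in\Gamma(\huaA^*)$ via the definition of $\Upsilon$, and then rewrite $\langle H^\sharp(L_{H^\sharp(\alpha_i)}\alpha_j),\alpha_3\rangle=\langle L_{H^\sharp(\alpha_i)}\alpha_j,H^\sharp(\alpha_3)\rangle$ using the symmetry of $H$ and expand the right-hand side with the defining relation \eqref{eq:dualLR} of $L$ on $\huaA^*$.

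The only non-formal step is this last term-by-term matching with \eqref{brac2}: \eqref{eq:dualLR} turns each $\langle L_{H^\sharp(\alpha_i)}\alpha_j,H^\sharp(\alpha_3)\rangle$ into an $a_\huaA$-derivative term plus a $\ast_\huaA$-term, and after one more use of $\langle H^\sharp(\gamma),\alpha_3\rangle=\langle\gamma,H^\sharp(\alpha_3)\rangle$ to put $\alpha_1,\alpha_2,\alpha_3$ into the slots of \eqref{brac2}, the four resulting terms reproduce the first four terms of $[H,H]$, while the commutator term matches directly; summing gives $\Courant{H,H}_{\rm KV}(\alpha_1,\alpha_2,\alpha_3)=2[H,H](\alpha_1,\alpha_2,\alpha_3)$. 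I do not expect any genuine obstacle: the only points requiring care are the degree relabeling (and its sign conventions) between $\huaC^*$ and $C_{\rm KV}^*$, and inserting the symmetry of $H$ at exactly the right places — all the real content already sits in Theorem~\ref{thm:graded Lie algebra}, Proposition~\ref{pro:LSBi-H}, and Lemma~\ref{lem:graded Lie algbra}.
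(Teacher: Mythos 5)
Your proposal is correct and follows the same route the paper takes (the paper derives this theorem directly from Lemma~\ref{lem:graded Lie algbra} together with the transport of structure along $\Psi$ and $\Upsilon$, giving no further details); your identification $\Psi(H)=H^\sharp$ and the term-by-term match of $\tfrac12\langle\Courant{H^\sharp,H^\sharp}(\alpha_1,\alpha_2),\alpha_3\rangle$ with \eqref{brac2} via \eqref{eq:dualLR} and the symmetry of $H$ both check out. The only thing worth flagging is the degree shift $C_{\rm KV}^{k}(\huaA^*)\cong\huaC^{k-1}(\huaA^*,\huaA)$, which you already note and which is exactly what places $H\in C_{\rm KV}^{2}(\huaA^*)$ in Lie-algebra degree $1$ so that the Maurer--Cartan condition makes sense.
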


\begin{rmk}
We~characterize a Koszul--Vinberg structure on a left-symmetric algebroid $\huaA$ as a Maurer--Cartan element of the graded Lie algebra $(C_{\rm KV}^{*}(\huaA^*),\Courant{\cdot,\cdot}_{\rm KV})$. This is parallel to the fact that a Poisson structure is a Maurer--Cartan element of the graded Lie algebra given by the Schouten--Nijenhuis bracket of multi-vector fields.
\end{rmk}

\subsection{Cohomologies and deformations of Koszul--Vinberg structures}

Let $H\in \Sym^2(\huaA)$ be a Koszul--Vinberg structure on a left-symmetric algebroid $(\huaA,\ast_\huaA,a_\huaA)$. Define $\delta_{\huaA^*}\colon C_{\rm KV}^{k}(\huaA^*)\longrightarrow
C_{\rm KV}^{k+1}(\huaA^*)$ by
\begin{gather*}
\delta_{\huaA^*}\varphi =(-1)^{k-1}\Courant{H,\varphi}_{\rm KV},\qquad
\forall \varphi\in C_{\rm KV}^{k}(\huaA^*).
\end{gather*}
By the graded Jacobi identity, we have $\delta_{\huaA^*}\circ \delta_{\huaA^*}=0$. Thus $(C_{\rm KV}^{*}(\huaA^*),\delta_{\huaA^*})$ is a cochain comp\-lex. Denote by $H_{\rm KV}^k(\huaA^*)$ the $k$-th cohomology group, called the {\it $k$-th cohomology group of the Koszul--Vinberg structure $H$}.

Furthermore, we have
\begin{pro}
For $\varphi\in C_{\rm KV}^{k}(\huaA^*)$, we have
 \begin{align*}
\delta_{\huaA^*}\varphi(\alpha_1,\dots,\alpha_{k+1})
 &=\sum_{i=1}^{k}(-1)^{i+1}a_{H^\sharp} (\alpha_i)\varphi(\alpha_1,\dots,\hat{\alpha_i},\dots,\alpha_{k+1})
 \\
 &\phantom{=}-\sum_{i=1}^{k}(-1)^{i+1}\varphi (\alpha_1,\dots,\hat{\alpha_i},\dots,\alpha_k,\alpha_i\ast_{H^\sharp} \alpha_{k+1})
 \\
 &\phantom{=}+\sum_{1\leq i<j\leq k}(-1)^{i+j}\varphi([\alpha_i,\alpha_j]_{H^\sharp},\alpha_1, \dots,\hat{\alpha_i},\dots,\hat{\alpha_j},\dots,\alpha_{k+1}),
\end{align*}
where $\alpha_1,\dots,\alpha_{k+1}\in\Gamma(\huaA^*)$, $\ast_{H^\sharp}$ is given by \eqref{eq:multiplication-H} and $[\cdot,\cdot]_{H^\sharp}$ is given by \eqref{eq:commutator-H}.
\end{pro}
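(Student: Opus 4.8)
The plan is to reduce $\delta_{\huaA^*}$ to the coboundary operator $\dM_{H^\sharp}$ of the relative Rota--Baxter operator $H^\sharp$ (Definition~\ref{defi:cohomology of RB-operator}) and then invoke Theorem~\ref{pro:coboundary operator relation}. Write $k$ for the degree of $\varphi$, so that $\varphi\in C_{\rm KV}^{k}(\huaA^*)=\Gamma\big({\wedge}^{k-1}\huaA\otimes\huaA\big)$ and $\Psi(\varphi)\in\huaC^{k-1}(\huaA^*,\huaA)$. One checks immediately from \eqref{eq:relation-coboundary} that $\Psi(H)=H^\sharp$; moreover by definition $\Courant{H,\varphi}_{\rm KV}=\Upsilon\Courant{H^\sharp,\Psi(\varphi)}$ while $\dM_{H^\sharp}(\Psi(\varphi))=(-1)^{k-1}\Courant{H^\sharp,\Psi(\varphi)}$, so the sign factor in $\delta_{\huaA^*}\varphi=(-1)^{k-1}\Courant{H,\varphi}_{\rm KV}$ cancels and $\delta_{\huaA^*}\varphi=\Upsilon\big(\dM_{H^\sharp}\Psi(\varphi)\big)$. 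By Proposition~\ref{pro:LSBi-H}, $H^\sharp$ is a relative Rota--Baxter operator on the \LP pair $(\huaA^c;L)$, so Theorem~\ref{pro:coboundary operator relation} applies and gives $\dM_{H^\sharp}=\partial_{\varrho}$, the coboundary operator of the Lie algebroid $(\huaA^*,[\cdot,\cdot]_{H^\sharp},a_{H^\sharp}=a_\huaA\circ H^\sharp)$ with coefficients in the representation $\varrho(\alpha)(x)=[H^\sharp\alpha,x]_\huaA+H^\sharp(L_x\alpha)$ on $\huaA$ furnished by Lemma~\ref{ex:MC-ten}; note that the Lie bracket produced there, namely $L_{H^\sharp\alpha}\beta-L_{H^\sharp\beta}\alpha$, is exactly the one in \eqref{eq:commutator-H}.

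It then remains to dualize the standard formula for $\partial_{\varrho}$. First I would expand $\big(\partial_{\varrho}\Psi(\varphi)\big)(\alpha_1,\dots,\alpha_k)$ using the coboundary formula for Lie algebroid cohomology with coefficients, pair the result with $\alpha_{k+1}$, and pass back to $\varphi$ by means of $\langle\Upsilon(P),\alpha_1\wedge\cdots\wedge\alpha_k\otimes\alpha_{k+1}\rangle=\langle P(\alpha_1,\dots,\alpha_k),\alpha_{k+1}\rangle$ and $\langle\Psi(\varphi)(\beta_1,\dots,\beta_{k-1}),\beta_k\rangle=\varphi(\beta_1,\dots,\beta_k)$. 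The term of $\partial_\varrho$ carrying $[\cdot,\cdot]_{H^\sharp}$ turns verbatim into the third sum of the claimed formula. The term of $\partial_\varrho$ carrying $\varrho(\alpha_i)$ reduces, after substituting $x=\Psi(\varphi)(\alpha_1,\dots,\hat{\alpha_i},\dots,\alpha_k)$ and $\beta=\alpha_{k+1}$, to the pointwise identity $\langle\varrho(\alpha)(x),\beta\rangle=a_{H^\sharp}(\alpha)\langle x,\beta\rangle-\langle x,\alpha\ast_{H^\sharp}\beta\rangle$ for all $x\in\Gamma(\huaA)$, $\alpha,\beta\in\Gamma(\huaA^*)$, which then produces exactly the first two sums.

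I expect this last identity to be the only genuine computation, hence the main obstacle. The plan for it is: write $\langle\varrho(\alpha)(x),\beta\rangle=\langle[H^\sharp\alpha,x]_\huaA,\beta\rangle+\langle H^\sharp(L_x\alpha),\beta\rangle$; expand the first summand via $[H^\sharp\alpha,x]_\huaA=H^\sharp\alpha\ast_\huaA x-x\ast_\huaA H^\sharp\alpha$, apply the defining relations \eqref{eq:dualLR} of $L$ and $R$ on $\huaA^*$ together with the easily checked identity $\huaL_y=L_y-R_y$ on $\huaA^*$, so that it becomes $a_{H^\sharp}(\alpha)\langle x,\beta\rangle-\langle\huaL_{H^\sharp\alpha}\beta,x\rangle$; expand the second summand using the symmetry $\langle H^\sharp\mu,\nu\rangle=\langle H^\sharp\nu,\mu\rangle$ of $H$, again \eqref{eq:dualLR}, and $\dM_\huaA f(y)=a_\huaA(y)f$, so that it becomes $\langle\dM_\huaA(H(\alpha,\beta)),x\rangle+\langle R_{H^\sharp\beta}\alpha,x\rangle$; finally add the two and recognize $\huaL_{H^\sharp\alpha}\beta-R_{H^\sharp\beta}\alpha-\dM_\huaA(H(\alpha,\beta))=\alpha\ast_{H^\sharp}\beta$ from \eqref{eq:multiplication-H}. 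As an alternative that bypasses the intermediate Lie algebroid $(\huaA^*,[\cdot,\cdot]_{H^\sharp},\varrho)$, one could instead start from the explicit expression for $\dM_{H^\sharp}$ in Proposition~\ref{pro:concret formula} (with $T=H^\sharp$ and $\rho=L$) and dualize directly, but routing through $\partial_\varrho$ keeps the signs and the degree shift $C_{\rm KV}^{k}(\huaA^*)=\Gamma\big({\wedge}^{k-1}\huaA\otimes\huaA\big)$ more transparent.
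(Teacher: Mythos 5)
Your proposal is correct, and every step checks out: $\Psi(H)=H^\sharp$, the sign $(-1)^{k-1}$ in $\delta_{\huaA^*}$ on $C_{\rm KV}^{k}(\huaA^*)$ does match the sign $(-1)^{k-1}$ in $\dM_{H^\sharp}$ on $\huaC^{k-1}(\huaA^*,\huaA)$ under the degree shift, and the pointwise identity $\langle\varrho(\alpha)(x),\beta\rangle=a_{H^\sharp}(\alpha)\langle x,\beta\rangle-\langle x,\alpha\ast_{H^\sharp}\beta\rangle$ follows exactly as you outline from \eqref{eq:dualLR}, the symmetry of $H$, and \eqref{eq:multiplication-H}. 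The paper itself dismisses the statement with ``a direct calculation,'' which most plausibly means expanding $\Courant{H,\varphi}_{\rm KV}=\Upsilon\Courant{H^\sharp,\Psi(\varphi)}$ via the explicit formula of Proposition~\ref{pro:concret formula} (your stated alternative), so your main route is genuinely different: you factor the computation through Theorem~\ref{pro:coboundary operator relation}, identifying $\dM_{H^\sharp}$ with the Lie algebroid coboundary $\partial_\varrho$ of $(\huaA^*,[\cdot,\cdot]_{H^\sharp},a_{H^\sharp})$ with coefficients in the representation of Lemma~\ref{ex:MC-ten}, and then dualize term by term. What this buys is that the only nontrivial computation is the single identity above --- and it is worth noting that this identity is not really an ``obstacle'': it is verbatim the computation the paper performs in the remark following \eqref{eq:rep-coadjoint}, where $\varrho(\alpha)$ is identified with the dual $\frkL_\alpha$ of the left multiplication of $(\huaA^*,\ast_{H^\sharp},a_{H^\sharp})$; once that is granted, your argument shows the claimed formula is just the coboundary \eqref{LSCA cohomology} of the left-symmetric algebroid $(\huaA^*,\ast_{H^\sharp},a_{H^\sharp})$, which is precisely the observation the paper records in the remark immediately after the proposition. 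The only caution is organizational rather than mathematical: you invoke the representation $\varrho$ and Proposition~\ref{pro:LSBi-H}, which appear in the surrounding text rather than before this proposition, but there is no circularity since they rest only on Lemma~\ref{ex:MC-ten} and Theorem~\ref{pro:coboundary operator relation} from Section~\ref{sec:MC-RRB-operator}.
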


\begin{proof}
 It~follows by a direct calculation.
\end{proof}
\begin{rmk}
 Note that this coboundary operator $\delta_{\huaA^*}$ is just the coboundary operator given by \eqref{LSCA cohomology} associated to the left-symmetric algebroid $(\huaA^*,\ast_{H^\sharp},a_{H^\sharp})$ in Theorem \ref{thm:LSBi-H}.
\end{rmk}

By Corollary \ref{cor:construction left-symmetrci algebroid} and Proposition \ref{pro:LSBi-H}, we have
\begin{pro}
 {\sloppy Let $H$ be a Koszul--Vinberg structure on a left-symmetric algebroid $(\huaA,\ast_\huaA,a_\huaA)$. Then $(\huaA^*,\cdot_{H^\sharp},a_{H^\sharp}=a_\huaA\circ H^\sharp)$ is a left-symmetric algebroid, where $\cdot_{H^\sharp}$ is given~by
 \begin{gather*}
 \alpha\cdot_{H^\sharp}\beta=L_{H^\sharp(\alpha)}\beta,\qquad
 \forall \alpha,\beta\in\Gamma(\huaA^*).
 \end{gather*}}
\end{pro}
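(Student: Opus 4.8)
The plan is to deduce the statement directly from the two cited results, essentially without computation. First I would invoke Proposition~\ref{pro:LSBi-H}: since $H$ is a Koszul--Vinberg structure on $(\huaA,\ast_\huaA,a_\huaA)$, the bundle map $H^\sharp\colon\huaA^*\longrightarrow\huaA$ is a relative Rota--Baxter operator on the \LP pair $(\huaA^c;L)$, where $\huaA^c=(\huaA,[\cdot,\cdot]_\huaA,a_\huaA)$ is the sub-adjacent Lie algebroid and $L\colon\huaA\longrightarrow\frkD(\huaA^*)$ is the representation of $\huaA^c$ on $\huaA^*$ determined by \eqref{eq:dualLR}.

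Then I would apply Corollary~\ref{cor:construction left-symmetrci algebroid} to this relative Rota--Baxter operator, taking $E=\huaA^*$, the ambient \LP pair to be $(\huaA^c;L)$, and $T=H^\sharp$. The corollary yields at once that $\huaA^*$ carries a left-symmetric algebroid structure whose anchor is $a_{\huaA^c}\circ H^\sharp$ and whose multiplication sends $(\alpha,\beta)$ to $L\big(H^\sharp(\alpha)\big)(\beta)$. Since the anchor of the sub-adjacent Lie algebroid $\huaA^c$ is $a_\huaA$ itself, this anchor is $a_\huaA\circ H^\sharp=a_{H^\sharp}$, and since $L\big(H^\sharp(\alpha)\big)(\beta)=L_{H^\sharp(\alpha)}\beta$ by the definition of $L$, the multiplication obtained is exactly the operation $\cdot_{H^\sharp}$ of the statement. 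This shows that $\big(\huaA^*,\cdot_{H^\sharp},a_{H^\sharp}=a_\huaA\circ H^\sharp\big)$ is a left-symmetric algebroid.

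As the whole argument is just a specialization of Corollary~\ref{cor:construction left-symmetrci algebroid} through Proposition~\ref{pro:LSBi-H}, there is no genuine obstacle; the only point requiring a moment's care is checking that the representation entering both cited statements is the same map $L$ of \eqref{eq:dualLR}, so that the generic multiplication $\rho(Tu)(v)$ of Corollary~\ref{cor:construction left-symmetrci algebroid} specializes precisely to $L_{H^\sharp(\alpha)}\beta$.
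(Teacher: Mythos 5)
Your argument is exactly the paper's: the proposition is stated there with the preamble ``By Corollary~\ref{cor:construction left-symmetrci algebroid} and Proposition~\ref{pro:LSBi-H}, we have'', i.e., one applies Proposition~\ref{pro:LSBi-H} to view $H^\sharp$ as a relative Rota--Baxter operator on $(\huaA^c;L)$ and then specializes Corollary~\ref{cor:construction left-symmetrci algebroid} with $E=\huaA^*$, $T=H^\sharp$, $\rho=L$, so that $\rho(Tu)(v)$ becomes $L_{H^\sharp(\alpha)}\beta$ and the anchor becomes $a_\huaA\circ H^\sharp$. Your proposal is correct and matches the paper's reasoning.
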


\begin{rmk}
 The left-symmetric algebroids $(\huaA^*,\cdot_{H^\sharp},a_{H^\sharp})$ and $(\huaA^*,\ast_{H^\sharp},a_{H^\sharp})$ have the same sub-adjacent Lie algebroid $(\huaA^*,[\cdot,\cdot]_{H^\sharp},a_{H^\sharp})$.
\end{rmk}

By Lemma \ref{ex:MC-ten}, we have
\begin{pro}
{\sloppy Let $H$ be a Koszul--Vinberg structure on a left-symmetric algebroid $(\huaA,\ast_\huaA,a_\huaA)$.
 Then
 \begin{gather}\label{eq:rep-coadjoint}
\varrho\colon\ \huaA^*\longrightarrow \frkD(\huaA),\qquad \varrho(\alpha)(x)=[H^\sharp(\alpha),x]_{\huaA}+H^\sharp(L_x\alpha ),\qquad \forall x\in\Gamma{(\huaA)},\alpha\in\Gamma(\huaA^*)
\end{gather}}\noindent
 is a representation of the sub-adjacent Lie algebroid $(\huaA^*,[\cdot,\cdot]_{H^\sharp},a_{H^\sharp})$ on the vector bundle $\huaA$.
\end{pro}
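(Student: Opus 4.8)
The plan is to recognize this proposition as a direct instance of Lemma~\ref{ex:MC-ten}. By Proposition~\ref{pro:LSBi-H}, the bundle map $H^\sharp\colon\huaA^*\longrightarrow\huaA$ is a relative Rota--Baxter operator on the \LP pair $(\huaA^c;L)$, where $\huaA^c=(\huaA,[\cdot,\cdot]_\huaA,a_\huaA)$ is the sub-adjacent Lie algebroid and $L\colon\huaA\longrightarrow\frkD(\huaA^*)$ is the representation given by \eqref{eq:dualLR}. Thus we are exactly in the setting of Lemmas~\ref{lem:new Lie algebroid} and~\ref{ex:MC-ten}, specialized to $E=\huaA^*$, $\rho=L$ and $T=H^\sharp$.

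First I would invoke Lemma~\ref{lem:new Lie algebroid}: it produces a Lie algebroid $(\huaA^*,[\cdot,\cdot]_{H^\sharp},a_{H^\sharp}=a_\huaA\circ H^\sharp)$ whose bracket is $[\alpha,\beta]_{H^\sharp}=L(H^\sharp(\alpha))\beta-L(H^\sharp(\beta))\alpha=L_{H^\sharp(\alpha)}\beta-L_{H^\sharp(\beta)}\alpha$. This is precisely formula \eqref{eq:commutator-H}, and $a_\huaA\circ H^\sharp$ is precisely the anchor $a_{H^\sharp}$ fixed in Theorem~\ref{thm:LSBi-H}; hence the Lie algebroid furnished by Lemma~\ref{lem:new Lie algebroid} coincides with the sub-adjacent Lie algebroid $(\huaA^*,[\cdot,\cdot]_{H^\sharp},a_{H^\sharp})$ appearing in the statement, so nothing new needs to be checked here.

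Then I would apply Lemma~\ref{ex:MC-ten} verbatim: it asserts that $u\mapsto\varrho(u)$, with $\varrho(u)(x)=[Tu,x]_\huaA+T\rho(x)(u)$, defines a representation of $(E,[\cdot,\cdot]_T,a_T)$ on the vector bundle $\huaA$. Substituting $T=H^\sharp$, $\rho=L$ and writing $\rho(x)(\alpha)=L_x\alpha$ turns this into $\varrho(\alpha)(x)=[H^\sharp(\alpha),x]_\huaA+H^\sharp(L_x\alpha)$, which is exactly \eqref{eq:rep-coadjoint}, and identifies $\varrho$ as a representation of $(\huaA^*,[\cdot,\cdot]_{H^\sharp},a_{H^\sharp})$ on $\huaA$. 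There is essentially no obstacle beyond this bookkeeping: the $\CWM$-linearity properties and the compatibility $\varrho[\alpha,\beta]_{H^\sharp}=\varrho(\alpha)\varrho(\beta)-\varrho(\beta)\varrho(\alpha)$ are inherited directly from Lemma~\ref{ex:MC-ten} and need not be re-verified; the only point requiring attention is the (routine) matching of the notations $[\cdot,\cdot]_{H^\sharp}$, $a_{H^\sharp}$ coming out of Lemma~\ref{lem:new Lie algebroid} with those fixed in \eqref{eq:commutator-H} and Theorem~\ref{thm:LSBi-H}, which was carried out above.
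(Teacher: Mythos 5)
Your proposal is correct and is exactly the paper's argument: the proposition is stated there with the preamble ``By Lemma~\ref{ex:MC-ten}, we have'', i.e., the authors likewise specialize Lemma~\ref{ex:MC-ten} (together with Lemma~\ref{lem:new Lie algebroid}) to $E=\huaA^*$, $\rho=L$, $T=H^\sharp$, using Proposition~\ref{pro:LSBi-H} to know that $H^\sharp$ is a relative Rota--Baxter operator on $(\huaA^c;L)$. Your additional check that the bracket produced by Lemma~\ref{lem:new Lie algebroid} agrees with \eqref{eq:commutator-H} is the right bookkeeping and nothing more is needed.
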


\begin{rmk}
The representation $\varrho$ given by \eqref{eq:rep-coadjoint} is exactly the dual representation of the left multiplication operation of the left-symmetric algebroid $(\huaA^*,\ast_{H^\sharp},a_{H^\sharp})$. More precisely, let us denote by $\frkL\colon \huaA^*\longrightarrow \frkD(\huaA^*)$ the left multiplication operation of the left-symmetric algebroid $(\huaA^*,\ast_{H^\sharp},a_{H^\sharp})$, then we have
\begin{align*}
\langle\frkL_\alpha x,\beta\rangle&=a_{H^\sharp}(\alpha)\langle x,\beta\rangle-\langle x,\alpha \ast_{H^\sharp} \beta\rangle
\\
&=a_{H^\sharp}(\alpha)\langle x,\beta\rangle-\langle x,\huaL_{H^\sharp(\alpha)}\beta-R_{H^\sharp(\beta)}\alpha-{\dM_\huaA} (H(\alpha,\beta))\rangle
\\
&=a_{H^\sharp}(\alpha)\langle x,\beta\rangle\!-\!a_\huaA( H^\sharp(\alpha))\langle x,\beta\rangle\!+\![H^\sharp(\alpha),x]_{\huaA}\!-\!\langle\alpha,x\ast_\huaA H^\sharp(\beta)\rangle
\!+\!a_\huaA(x)H(\alpha,\beta)
\\
&=\langle[H^\sharp(\alpha),x]_{\huaA}+H^\sharp(L_x\alpha ),\beta\rangle
\\
&=\langle\varrho(\alpha)(x),\beta\rangle.
\end{align*}
Thus we have $\frkL_\alpha x=\varrho(\alpha)(x)$.
\end{rmk}

Let $H$ be a Koszul--Vinberg structure on a left-symmetric algebroid $(\huaA,\ast_\huaA,a_\huaA)$. By Theo\-rem~\ref{pro:coboundary operator relation}, for $P\in\huaC^k(\huaA^*,\huaA)$ and $\alpha_1,\dots,\alpha_{k+1}\in\Gamma{(\huaA^*)}$, the coboundary operator $\dM_{H^\sharp}\colon \huaC^k(\huaA^*,\huaA)\allowbreak\longrightarrow \huaC^{k+1}(\huaA^*,\huaA)$ of the relative Rota--Baxter operator $H^\sharp$ is given by
\begin{align*}
\dM_{H^\sharp}P(\alpha_1,\dots,\alpha_{k+1})
&=\sum_{i=1}^{k+1}(-1)^{i+1}[H^\sharp(\alpha_i),P(\alpha_1,\alpha_2,
\dots,\hat{\alpha_i},\dots,\alpha_{k+1})]_{\huaA}
\\
&\phantom{=}+\sum_{i=1}^{k+1}(-1)^{i+1}H^\sharp\big(L_{P(\alpha_1,\alpha_2, \dots,\hat{\alpha_i},\dots,\alpha_{k+1})}\alpha_i\big)
\\
&\phantom{=}+\sum_{1\leq i<j\leq {k+1}}(-1)^{i+j}P([\alpha_i,\alpha_j]_{H^\sharp},\alpha_1, \dots,\hat{\alpha_i},\dots,\hat{\alpha_j},\dots,\alpha_{k+1}).
\end{align*}
Denote by $H^k(\huaA^*,\huaA)$ the $k$-th cohomology group, called the {\it $k$-th cohomology group of the relative Rota--Baxter operator $H^\sharp$}.

\begin{pro}\label{pro:isomorphism}
 With the above notations, the map $\Psi$ defined by \eqref{eq:relation-coboundary} is a cochain isomorphism between cochain complexes $(C_{\rm KV}^{*}(\huaA^*),\delta_{\huaA^*})$ and $(\huaC^*(\huaA^*,\huaA),\dM_{H^\sharp})$, i.e., we have the following commutative diagram:
 \begin{gather*}
\xymatrix{
\cdots
\longrightarrow C_{\rm KV}^{k+1}(\huaA^*) \ar[d]^{\Psi} \ar[r]^{\quad\delta_{\huaA^*}} & C_{\rm KV}^{k+2}(\huaA^*) \ar[d]^{\Psi} \ar[r] & \cdots \\
\cdots\longrightarrow \huaC^k(\huaA^*,\huaA) \ar[r]^{\quad \dM_{H^\sharp}} &\huaC^{k+1}(\huaA^*,\huaA)\ar[r]& \cdots.}
\end{gather*}
 Consequently, $\Psi$ induces an isomorphism map $\Psi_\ast$ between the corresponding cohomology groups.
\end{pro}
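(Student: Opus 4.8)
The plan is to deduce the statement purely by unwinding definitions, exploiting that $\Courant{\cdot,\cdot}_{\rm KV}$ and $\delta_{\huaA^*}$ were \emph{defined} by transporting $\Courant{\cdot,\cdot}$ and $\dM_{H^\sharp}$ along the mutually inverse maps $\Psi$ and $\Upsilon$. The first ingredient I would isolate is the identity $\Psi(H)=H^\sharp$, which is immediate: for all $\alpha,\beta\in\Gamma(\huaA^*)$ one has $\langle\Psi(H)(\alpha),\beta\rangle=\langle H,\alpha\otimes\beta\rangle=H(\alpha,\beta)=\langle H^\sharp(\alpha),\beta\rangle$ by the definition of $\Psi$ in \eqref{eq:relation-coboundary} and of $H^\sharp$.

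Next I would record that $\Psi$ intertwines the two brackets. Since $\Psi\circ\Upsilon=\Id$ and $\Upsilon\circ\Psi=\Id$, the map $\Psi$ is a degreewise bijection sending $C_{\rm KV}^{k}(\huaA^*)=\Gamma\big(\wedge^{k-1}\huaA\otimes\huaA\big)$ onto $\huaC^{k-1}(\huaA^*,\huaA)$; applying $\Psi$ to the definition $\Courant{\varphi,\phi}_{\rm KV}:=\Upsilon\Courant{\Psi(\varphi),\Psi(\phi)}$ and cancelling $\Psi\circ\Upsilon=\Id$ yields
\[
\Psi\big(\Courant{\varphi,\phi}_{\rm KV}\big)=\Courant{\Psi(\varphi),\Psi(\phi)}.
\]

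Now, for $\varphi\in C_{\rm KV}^{k}(\huaA^*)$, so that $\Psi(\varphi)\in\huaC^{k-1}(\huaA^*,\huaA)$, I would chain the above with the two sign conventions $\delta_{\huaA^*}\varphi=(-1)^{k-1}\Courant{H,\varphi}_{\rm KV}$ and $\dM_{H^\sharp}P=(-1)^{j}\Courant{H^\sharp,P}$ for $P\in\huaC^{j}(\huaA^*,\huaA)$ (applied with $j=k-1$), together with $\Psi(H)=H^\sharp$, to obtain
\[
\Psi(\delta_{\huaA^*}\varphi)=(-1)^{k-1}\Psi\big(\Upsilon\Courant{H^\sharp,\Psi(\varphi)}\big)=(-1)^{k-1}\Courant{H^\sharp,\Psi(\varphi)}=\dM_{H^\sharp}(\Psi(\varphi)),
\]
which is exactly the commutativity $\Psi\circ\delta_{\huaA^*}=\dM_{H^\sharp}\circ\Psi$ displayed in the diagram. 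Since $\Psi$ is in addition a degreewise bijection with explicit inverse $\Upsilon$, it is an isomorphism of cochain complexes, and therefore descends to an isomorphism $\Psi_\ast\colon H_{\rm KV}^{k}(\huaA^*)\to H^{k}(\huaA^*,\huaA)$ of cohomology groups.

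I do not expect a real obstacle: this is definition-chasing, and the only delicate point is keeping the degree shift $C_{\rm KV}^{k}(\huaA^*)\cong\huaC^{k-1}(\huaA^*,\huaA)$ straight and confirming that the two factors of $(-1)^{k-1}$ cancel. If a self-contained verification is preferred, an alternative is to drop the abstract bracket manipulations and instead pair the explicit formula for $\delta_{\huaA^*}$ from the preceding proposition with an arbitrary $\alpha_{k+1}\in\Gamma(\huaA^*)$, then match it term by term against the displayed formula for $\dM_{H^\sharp}$ using $\langle\Psi(\varphi)(\alpha_1,\dots,\alpha_{k-1}),\alpha_k\rangle=\langle\varphi,\alpha_1\wedge\cdots\wedge\alpha_{k-1}\otimes\alpha_k\rangle$; that computation reproduces the same identity.
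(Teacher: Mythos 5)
Your proof is correct and follows essentially the same route as the paper: both arguments rest on the observation that $\Psi$ is a graded Lie algebra isomorphism intertwining $\Courant{\cdot,\cdot}_{\rm KV}$ with $\Courant{\cdot,\cdot}$ (which is immediate from the definition of $\Courant{\cdot,\cdot}_{\rm KV}$ as the transported bracket and $\Psi\circ\Upsilon=\Upsilon\circ\Psi=\Id$), together with $\Psi(H)=H^\sharp$ and the matching sign conventions for $\delta_{\huaA^*}$ and $\dM_{H^\sharp}$. Your version merely spells out the steps the paper labels ``straightforward,'' and your degree bookkeeping and sign-matching are accurate.
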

\begin{proof}
It~is straightforward to see that $\Psi$ is a graded Lie algebra isomorphism between the graded Lie algebra $(\huaC^*(\huaA^*,\huaA),\Courant{\cdot,\cdot})$ and $(C_{\rm KV}^{*}(\huaA^*),\Courant{\cdot,\cdot}_{\rm KV})$. Thus for any $P\in C_{\rm KV}^{k+1}(\huaA^*)$, we~have
\begin{gather*}
\Psi(\delta_{\huaA^*}P)=\Psi((-1)^k\Courant{H,P}_{\rm KV})=(-1)^k\Courant{\Psi(H),\Psi(P)}=\dM_{H^\sharp}\Psi(P),
\end{gather*}
which implies that $\dM_{H^\sharp}\circ \Psi =\Psi\circ\delta_{\huaA^*}$, i.e., the map $\Psi$ is a cochain map between cochain complexes $(C_{\rm KV}^{*}(\huaA^*),\delta_{\huaA^*})$ and $(\huaC^*(\huaA^*,\huaA),\dM_{H^\sharp})$.
Consequently, for any $k\geq0$, $\Psi$ induces an isomorphism between the corresponding cohomology groups.
\end{proof}

Now we introduce a new cochain complex, whose cohomology groups control deformations of Koszul--Vinberg structures. Let $H$ be a Koszul--Vinberg structure on a left-symmetric algebroid $(\huaA,\ast_\huaA,a_\huaA)$. For all $\alpha_1,\alpha_2,\alpha_3\in \Gamma(\huaA^*)$, define
\begin{gather*}
	\tilde{\huaC}_{\rm KV}^1(\huaA^*)=\big\{x \in 	{\huaC}_{\rm KV}^1(\huaA^*)\mid H(R_x \alpha_1,\alpha_2)=H(\alpha_1,R_x\alpha_2)\big\},
\\
	\tilde{\huaC}_{\rm KV}^2(\huaA^*)=\big\{\varphi\in {\huaC}_{\rm KV}^2(\huaA^*)\mid \varphi(\alpha_1,\alpha_2)=\varphi(\alpha_2,\alpha_1)\big\},
\\
	\tilde{\huaC}_{\rm KV}^3(\huaA^*)=\big\{\varphi\in {\huaC}_{\rm KV}^3(\huaA^*)\mid \varphi(\alpha_1,\alpha_2,\alpha_3)+c.p.=0\big\},
\\
	\tilde{\huaC}_{\rm KV}^k(\huaA^*)={\huaC}_{\rm KV}^k(\huaA^*),\qquad k\geq 4.
\end{gather*}
It~is straightforward to verify that the cochain complex $\big(\tilde{C}_{\rm KV}^{*}(\huaA^*),\delta_{\huaA^*}\big)$ is a subcomplex of the cochain complex $(C_{\rm KV}^{*}(\huaA^*),\delta_{\huaA^*})$. Denote by $\tilde{H}_{\rm KV}^k(\huaA^*)$ the $k$-th cohomology group.

\begin{defi}
{\sloppy Let $H$ be a Koszul--Vinberg structure on a left-symmetric algebroid $(\huaA,\ast_\huaA,a_\huaA)$. A {\it formal deformation} of the Koszul--Vinberg structure $H$ is a formal power series
\begin{gather*}
H_t=\sum_{i=0}^{+\infty}\huaH_it^i\in\Sym^2(\huaA)[[t]]
\end{gather*}}\noindent
 such that $H_t$ is a Koszul--Vinberg structure on the left-symmetric algebroid $(\huaA\otimes \Real[[t]],\ast_\huaA,a_\huaA)$ and $\huaH_0=H$.
\end{defi}
Note that $H_t$ is a formal deformation of the Koszul--Vinberg structure $H$ if and only if $H_t^\sharp$ is a formal deformation of the relative Rota--Baxter operator $H^\sharp$ on the \LP pair $(\huaA^c;L)$.

{\sloppy\begin{defi}
Let $H$ be a Koszul--Vinberg structure on a left-symmetric algebroid $(\huaA,\ast_\huaA,a_\huaA)$. If $H_{(n)}=\sum_{i=0}^{n}\huaH_it^i$ with $\huaH_0=H$, $\huaH_i\in\Sym^2(\huaA)$, $i=1,\dots,n$ is a Koszul--Vinberg structure on the left-symmetric algebroid $\big(\huaA\otimes \Real[[t]]/\big(t^{n+1}\big),\ast_\huaA,a_\huaA\big)$, we say that $H_{(n)}$ is an {\it order~$n$ deformation} of the Koszul--Vinberg structure $H$. Furthermore, if there exists an element $\huaH_{n+1}\in\Sym^2(\huaA)$ such that $H_{(n+1)}=H_{(n)}+t^{n+1}\huaH_{n+1}$ is an order $n$ deformation of the Koszul--Vinberg structure $H$, we say that $H_{(n)}$ is {\it extendable}.
\end{defi}}\noindent
 We~call an order $1$ deformation of the Koszul--Vinberg structure $H$ on a left-symmetric algebroid $(\huaA,\ast_\huaA,a_\huaA)$ an {\it infinitesimal deformation} of the Koszul--Vinberg structure $H$.

It~is not hard to check that $H_{(n)}$ is an order $n$ deformation of the Koszul--Vinberg structure~$H$ if and only if $H_{(n)}^\sharp$ is an order $n$ deformation of the relative Rota--Baxter operator $H^\sharp$ on the \LP pair $(\huaA^c;L)$.

\begin{defi}
{\sloppy
Let $H$ be a Koszul--Vinberg structure on a left-symmetric algebroid $(\huaA,\ast_\huaA,a_\huaA)$. Two order $n$ deformations $H_t$ and $H'_t$ of $H$ are said to be {\it equivalent} if there exists a formal series $\huaX_t=\sum_{i=1}^{+\infty}x_it^i,~ x_i\in \Gamma(\huaA	)$ such that
\begin{gather*}
{\rm exp}(\ad_{\huaX_t})H_t=H'_{t} \mbox{ modulo } t^{n+1},
\end{gather*}}\noindent
where ${\rm exp}$ denotes the exponential series and\vspace{-.5ex}
\begin{gather*}
\ad^k_{\huaX_t}H_t=\big[\hspace{-3pt}\big[\huaX_t,\big[\hspace{-3pt}\big[\huaX_t,\dots,[\hspace{-1.5pt}[\huaX_t,\stackrel{}H_t]\hspace{-1.5pt}]_{\rm KV},\stackrel{k}{\dots}\big]\hspace{-3pt}\big]_{\rm KV}\big]\hspace{-3pt}\big]_{\rm KV}.
\end{gather*}
An order $n$ deformation $H_t$ of $H$ is called {\it trivial} if $H_t$ is equivalent to $H$.
\end{defi}

{\sloppy\begin{pro}\label{pro:equivalent describe}
 Let $H$ be a Koszul--Vinberg structure on a left-symmetric algebroid $(\huaA,\ast_\huaA,a_\huaA)$ and $H_t,H'_t\in\Sym^2(\huaA)[[t]]$. Two order $n$ deformations $H_t$ and $H'_t$ of the Koszul--Vinberg structure $H$ are equivalent if and only if the two order $n$ deformations $H^\sharp_t$ and $(H')^\sharp_t$ of the relative Rota--Baxter operator $H^\sharp$ on the \LP pair $(\huaA^c;L)$ are equivalent.
\end{pro}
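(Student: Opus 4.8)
The plan is to push the entire equivalence relation back and forth across the graded Lie algebra isomorphism $\Psi$ of Proposition~\ref{pro:isomorphism}. Three facts will do the work. First, $\Psi\colon(\huaC^*(\huaA^*,\huaA),\Courant{\cdot,\cdot})\to(C_{\rm KV}^*(\huaA^*),\Courant{\cdot,\cdot}_{\rm KV})$ is an isomorphism of graded Lie algebras with inverse $\Upsilon$; in fact $\Courant{\cdot,\cdot}_{\rm KV}$ was \emph{defined} precisely so that $\Psi\bigl(\Courant{\varphi,\phi}_{\rm KV}\bigr)=\Courant{\Psi(\varphi),\Psi(\phi)}$. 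Second, $\Psi$ sends each $\huaH_i\in\Sym^2(\huaA)$ to $\huaH_i^\sharp$, because \eqref{eq:relation-coboundary} gives $\langle\Psi(\huaH_i)(\alpha),\beta\rangle=\langle\huaH_i,\alpha\otimes\beta\rangle=\langle\huaH_i^\sharp(\alpha),\beta\rangle$; extending $\Psi$ to be $\Real[[t]]$-linear, this yields $\Psi(H_t)=H_t^\sharp$ and $\Psi(H'_t)=(H')_t^\sharp$. Third, on $C_{\rm KV}^1(\huaA^*)=\Gamma(\huaA)=\huaC^0(\huaA^*,\huaA)$ the map $\Psi$ is the identity, so a formal series $\huaX_t=\sum_{i\ge 1}x_it^i$ with $x_i\in\Gamma(\huaA)$ is literally the same object whether used on the Koszul--Vinberg side or on the relative Rota--Baxter side.

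The heart of the proof will be the identity
\[
\Psi\bigl(\ad^k_{\huaX_t}H_t\bigr)=\ad^k_{\huaX_t}H_t^\sharp \qquad(k\ge 0),
\]
where the iterated bracket on the left is $\Courant{\cdot,\cdot}_{\rm KV}$ and the one on the right is $\Courant{\cdot,\cdot}$. I would argue this by induction on $k$: the case $k=0$ is $\Psi(H_t)=H_t^\sharp$, and for $k+1$,
\[
\Psi\bigl(\ad^{k+1}_{\huaX_t}H_t\bigr)=\Psi\bigl(\Courant{\huaX_t,\ad^k_{\huaX_t}H_t}_{\rm KV}\bigr)=\Courant{\Psi(\huaX_t),\Psi(\ad^k_{\huaX_t}H_t)}=\Courant{\huaX_t,\ad^k_{\huaX_t}H_t^\sharp}=\ad^{k+1}_{\huaX_t}H_t^\sharp,
\]
using the defining relation for $\Courant{\cdot,\cdot}_{\rm KV}$, the fact that $\Psi$ fixes $\huaX_t$, and the inductive hypothesis. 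Since $\huaX_t$ starts in $t$-order $1$, the operator $\ad^k_{\huaX_t}$ raises $t$-order by $k$, so modulo $t^{n+1}$ only finitely many terms survive and $\exp(\ad_{\huaX_t})$ makes sense on both sides; summing over $k$ gives $\Psi\bigl(\exp(\ad_{\huaX_t})H_t\bigr)\equiv\exp(\ad_{\huaX_t})H_t^\sharp$ modulo $t^{n+1}$. Applying $\Psi$ to the relation $\exp(\ad_{\huaX_t})H_t\equiv H'_t$ produces $\exp(\ad_{\huaX_t})H_t^\sharp\equiv(H')_t^\sharp$, and applying $\Upsilon$ reverses the implication; hence the two equivalence conditions hold for exactly the same choices of $\huaX_t$. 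Combined with the already-noted correspondence between order $n$ deformations of $H$ and of $H^\sharp$, this is precisely the assertion.

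I do not anticipate a real obstacle: all the substance sits in Proposition~\ref{pro:isomorphism}, and the rest is the bookkeeping above. The only places to be a little careful are the degree conventions --- one must check that $\huaX_t$ lives in the degree-$0$ part while $H$ and $H^\sharp$ sit in the degree-$1$ parts where the adjoint action and the Maurer--Cartan formalism are set up --- and the two elementary identities $\Psi|_{\Gamma(\huaA)}=\Id$ and $\Psi(H)=H^\sharp$, both of which are immediate from \eqref{eq:relation-coboundary}.
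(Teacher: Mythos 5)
Your proposal is correct and takes exactly the route the paper does: its entire proof is the one sentence that the claim ``follows from the fact that $\Psi$ is a graded Lie algebra isomorphism between $(\huaC^*(\huaA^*,\huaA),\Courant{\cdot,\cdot})$ and $(C_{\rm KV}^{*}(\huaA^*),\Courant{\cdot,\cdot}_{\rm KV})$,'' and your argument is just the careful unpacking of that sentence (that $\Psi$ fixes $\Gamma(\huaA)$, sends $H_t$ to $H_t^\sharp$, and hence intertwines $\exp(\ad_{\huaX_t})$ on the two sides).
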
}

\begin{proof}
It~follows from that $\Psi$ defined by \eqref{eq:relation-coboundary} is a graded Lie algebra isomorphism between the graded Lie algebra $(\huaC^*(\huaA^*,\huaA),\Courant{\cdot,\cdot})$ and $(C_{\rm KV}^{*}(\huaA^*),\Courant{\cdot,\cdot}_{\rm KV})$.
\end{proof}

{\sloppy\begin{pro}
Let $H$ be a Koszul--Vinberg structure on a left-symmetric algebroid $(\huaA,\ast_\huaA,a_\huaA)$. Then there is a one-to-one correspondence between equivalence classes of infinitesimal deformations of the Koszul--Vinberg structure $H$ and the second cohomology group $\tilde{H}_{\rm KV}^2(\huaA^*)$.
\end{pro}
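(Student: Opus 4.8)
The plan is to adapt the proof of Theorem~\ref{pro:equivalence class of O} to the Koszul--Vinberg setting, working inside the graded Lie algebra $(C_{\rm KV}^{*}(\huaA^*),\Courant{\cdot,\cdot}_{\rm KV})$ with Maurer--Cartan element $H$ and differential $\delta_{\huaA^*}$, while keeping track of the subcomplex $\tilde{C}_{\rm KV}^{*}(\huaA^*)$. I would send an infinitesimal deformation $H_t=H+t\huaH_1$ of $H$ to the class $[\huaH_1]\in\tilde{H}_{\rm KV}^2(\huaA^*)$. First I would check that this is well defined on the cochain level: by definition of an infinitesimal deformation $\huaH_1\in\Sym^2(\huaA)=\tilde{\huaC}_{\rm KV}^2(\huaA^*)$, and since
\begin{gather*}
\Courant{H_t,H_t}_{\rm KV}=\Courant{H,H}_{\rm KV}+2t\Courant{H,\huaH_1}_{\rm KV}+t^2\Courant{\huaH_1,\huaH_1}_{\rm KV}
\end{gather*}
with $\Courant{H,H}_{\rm KV}=0$, the requirement that $H_t$ be a Koszul--Vinberg structure modulo $t^2$ is exactly $\Courant{H,\huaH_1}_{\rm KV}=0$, i.e.\ $\delta_{\huaA^*}\huaH_1=0$. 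Hence $\huaH_1$ is a $2$-cocycle of $\tilde{C}_{\rm KV}^{*}(\huaA^*)$ and $[\huaH_1]$ is defined.

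Next I would show the assignment descends to equivalence classes. If $H_t$ and $H'_t$ are equivalent via $\huaX_t=tx_1+\cdots$ with $x_1\in\Gamma(\huaA)$, comparing the coefficients of $t$ in ${\rm exp}(\ad_{\huaX_t})H_t=H'_t$ gives $\huaH'_1-\huaH_1=\ad_{x_1}H=-\delta_{\huaA^*}x_1$; since $\huaH_1,\huaH'_1\in\Sym^2(\huaA)$, the coboundary $\delta_{\huaA^*}x_1$ is symmetric. The crucial point is that this forces $x_1\in\tilde{\huaC}_{\rm KV}^1(\huaA^*)$. To see it I would transport along the cochain isomorphism $\Psi$ of Proposition~\ref{pro:isomorphism}: one has $\Psi(\delta_{\huaA^*}x_1)=\dM_{H^\sharp}x_1$ with $\dM_{H^\sharp}x_1(\alpha)=[H^\sharp(\alpha),x_1]_{\huaA}+H^\sharp(L_{x_1}\alpha)$, and a short computation using \eqref{eq:dualLR}, the definition of $R_x$ on $\huaA^*$, and the symmetry of $H$ gives
\begin{gather*}
\langle\dM_{H^\sharp}x_1(\alpha),\beta\rangle-\langle\dM_{H^\sharp}x_1(\beta),\alpha\rangle=H(R_{x_1}\alpha,\beta)-H(\alpha,R_{x_1}\beta),\qquad\forall\,\alpha,\beta\in\Gamma(\huaA^*).
\end{gather*}
Since $\Psi$ identifies $\tilde{\huaC}_{\rm KV}^2(\huaA^*)=\Sym^2(\huaA)$ with the symmetric bundle maps $\huaA^*\to\huaA$, the left-hand side vanishes precisely when $x_1\in\tilde{\huaC}_{\rm KV}^1(\huaA^*)$. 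Thus $\huaH_1-\huaH'_1=\delta_{\huaA^*}x_1$ with $x_1$ in the subcomplex, so $[\huaH_1]=[\huaH'_1]$ in $\tilde{H}_{\rm KV}^2(\huaA^*)$.

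Finally I would prove surjectivity and injectivity. For surjectivity, a $2$-cocycle $\huaH_1\in\tilde{\huaC}_{\rm KV}^2(\huaA^*)$ is an element of $\Sym^2(\huaA)$ with $\delta_{\huaA^*}\huaH_1=0$, and the expansion of $\Courant{H_t,H_t}_{\rm KV}$ above then shows $H_t=H+t\huaH_1$ is an infinitesimal deformation mapping to $[\huaH_1]$. For injectivity, if $[\huaH_1]=[\huaH'_1]$ then $\huaH'_1-\huaH_1=\delta_{\huaA^*}x_1$ for some $x_1\in\tilde{\huaC}_{\rm KV}^1(\huaA^*)\subseteq\Gamma(\huaA)$; taking $\huaX_t=-tx_1$ one computes ${\rm exp}(\ad_{\huaX_t})H_t\equiv H+t\huaH_1+t\delta_{\huaA^*}x_1=H'_t\pmod{t^2}$, so $H_t$ and $H'_t$ are equivalent. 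This yields the asserted bijection $[H_t]\mapsto[\huaH_1]$.

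The hard part will be the displayed identity of the second paragraph: it is the one place where the symmetry conditions defining the subcomplex $\tilde{C}_{\rm KV}^{*}(\huaA^*)$ must be reconciled with the coboundary operator, and proving it requires carefully unwinding the definitions of $L$ and $R$ on $\huaA^*$ together with the left-symmetric algebroid axioms and the symmetry of $H$. Everything else is formal power-series bookkeeping, parallel to the proofs of Theorems~\ref{thm:graded Lie algebra} and~\ref{pro:equivalence class of O}.
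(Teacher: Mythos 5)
Your proposal is correct and follows essentially the same route as the paper: transfer to the relative Rota--Baxter operator $H^\sharp$ via $\Psi$ (the paper invokes Theorem~\ref{pro:equivalence class of O} and Proposition~\ref{pro:equivalent describe} for this), obtain $\huaH_1'-\huaH_1=\pm\delta_{\huaA^*}x_1$, and then use the symmetry of $\huaH_1$, $\huaH_1'$ to force $H(R_{x_1}\alpha_1,\alpha_2)=H(\alpha_1,R_{x_1}\alpha_2)$, i.e., $x_1\in\tilde{\huaC}_{\rm KV}^1(\huaA^*)$. The displayed identity you flag as the hard part is exactly the computation implicit in the paper's one-line claim, and it checks out.
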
}

\begin{proof}
Assume that $H_t$ and $H'_t$ are equivalent infinitesimal deformations of the Koszul--Vinberg structure $H$. By Theorem \ref{pro:equivalence class of O} and Proposition \ref{pro:equivalent describe}, there exists an element $x\in\Gamma(\huaA)$ such that\vspace{-.5ex}
\begin{gather*}
\huaH_1'- \huaH_1=\delta_{\huaA^* }x.
\end{gather*}
Since $\huaH'_1$ and $\huaH_1$ are symmetric, for all $\alpha_1,\alpha_2\in \Gamma(\huaA^*)$, we have\vspace{-.5ex}
\begin{gather*}
\delta_{\huaA^* }x(\alpha_1,\alpha_2)=\delta_{\huaA^* }x(\alpha_2,\alpha_1),
\end{gather*}
which implies that $H(R_x \alpha_1,\alpha_2)=H(\alpha_1,R_x\alpha_2)$, i.e., $x\in 	\tilde{\huaC}_{\rm KV}^1(\huaA^*)$. Thus $\huaH_1'$ and $\huaH_1$ are in the same cohomology class of $\tilde{H}_{\rm KV}^2(\huaA^*)$.

The converse can be proved similarly. We~omit the details.\vspace{-.5ex}
\end{proof}

Similarly to Proposition \ref{pro:trivial of O-operator}, we have\vspace{-.5ex}
{\sloppy\begin{pro}
Let $H$ be a Koszul--Vinberg structure on a left-symmetric algebroid $(\huaA,\ast_\huaA,a_\huaA)$ such that $\tilde{H}_{\rm KV}^2(\huaA^*)=0$. Then all infinitesimal deformations of the Koszul--Vinberg structure $H$ are trivial.
\end{pro}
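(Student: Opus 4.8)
The plan is to reduce the statement to its counterpart for the relative Rota--Baxter operator $H^\sharp$ on the \LP pair $(\huaA^c;L)$ and then transport the conclusion back. The tools are already in place: a (formal, order $n$, infinitesimal) deformation of $H$ is the same thing as such a deformation of $H^\sharp$ on $(\huaA^c;L)$ (recorded above, building on Proposition~\ref{pro:LSBi-H}); the cochain complex $\big(C_{\rm KV}^{*}(\huaA^*),\delta_{\huaA^*}\big)$ is isomorphic to $\big(\huaC^*(\huaA^*,\huaA),\dM_{H^\sharp}\big)$ via $\Psi$ (Proposition~\ref{pro:isomorphism}); and equivalence of deformations of $H$ matches equivalence of deformations of $H^\sharp$ (Proposition~\ref{pro:equivalent describe}). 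So it is enough to show that the hypothesis $\tilde{H}_{\rm KV}^2(\huaA^*)=0$ kills the cohomology class of the linear term of an arbitrary infinitesimal deformation of $H^\sharp$, and then apply Theorem~\ref{pro:equivalence class of O}.

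In detail: let $H_t=H+t\huaH_1$, with $\huaH_1\in\Sym^2(\huaA)$, be an infinitesimal deformation of $H$. Then $H^\sharp_t=H^\sharp+t\huaH_1^\sharp$ is an infinitesimal deformation of $H^\sharp$, so by Theorem~\ref{pro:equivalence class of O} its linear term $\huaH_1^\sharp=\Psi(\huaH_1)$ is a $\dM_{H^\sharp}$-cocycle; since $\Psi$ is an isomorphism of cochain complexes this gives $\delta_{\huaA^*}\huaH_1=0$. As $\huaH_1$ is symmetric it lies in $\tilde{\huaC}_{\rm KV}^2(\huaA^*)$, hence it is a cocycle of the subcomplex $\big(\tilde{C}_{\rm KV}^{*}(\huaA^*),\delta_{\huaA^*}\big)$ and defines a class in $\tilde{H}_{\rm KV}^2(\huaA^*)$. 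By hypothesis that class vanishes, so $\huaH_1=\delta_{\huaA^*}x$ for some $x\in\tilde{\huaC}_{\rm KV}^1(\huaA^*)\subset\Gamma(\huaA)$; applying $\Psi$ and using $\dM_{H^\sharp}\circ\Psi=\Psi\circ\delta_{\huaA^*}$ yields $\huaH_1^\sharp=\dM_{H^\sharp}\big(\Psi(x)\big)$, so the class of $\huaH_1^\sharp$ in $\huaH_{H^\sharp}^1(\huaA^*,\huaA)$ is zero. By Theorem~\ref{pro:equivalence class of O}, $H^\sharp_t$ is then equivalent to the trivial deformation $H^\sharp$, and by Proposition~\ref{pro:equivalent describe} $H_t$ is equivalent to $H$; i.e.\ $H_t$ is trivial. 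One may instead produce the trivializing gauge directly, taking $\huaX_t=tx$ and checking that the first-order term of $\exp(\ad_{\huaX_t})H_t$ vanishes, exactly as in the proof of Theorem~\ref{pro:equivalence class of O}.

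The statement is the Koszul--Vinberg analogue of Proposition~\ref{pro:trivial of O-operator}, and accordingly there is no substantive obstacle: the argument is pure transport of structure through isomorphisms that are already available. The only points that need a little care are (i) that the controlling cohomology here is that of the \emph{subcomplex} $\tilde{C}_{\rm KV}^{*}(\huaA^*)$, so one must note that the symmetry of $\huaH_1$ is precisely what confines both $\huaH_1$ and its primitive $x$ to the subcomplex (the condition on $x$ being the equality $H(R_x\alpha_1,\alpha_2)=H(\alpha_1,R_x\alpha_2)$ isolated in the preceding proposition), while vanishing of the subcomplex class still yields $\dM_{H^\sharp}$-exactness of $\huaH_1^\sharp$ in the full complex since $\tilde{\huaC}_{\rm KV}^1(\huaA^*)\subset C_{\rm KV}^1(\huaA^*)$; and (ii) the usual sign and degree bookkeeping in $\Courant{\cdot,\cdot}_{\rm KV}$ if one chooses to run the explicit-gauge version --- all of which was already settled in Propositions~\ref{pro:isomorphism}, \ref{pro:equivalent describe} and the preceding proposition.
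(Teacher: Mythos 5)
Your proof is correct and follows exactly the route the paper intends: the paper omits the proof entirely (prefacing the proposition only with ``Similarly to Proposition~\ref{pro:trivial of O-operator}''), and the intended argument is precisely your reduction through $\Psi$ and Proposition~\ref{pro:equivalent describe} to the relative Rota--Baxter operator $H^\sharp$, combined with the classification of infinitesimal deformations by the first cohomology group. Your added care about the subcomplex $\tilde{C}_{\rm KV}^{*}(\huaA^*)$ --- that symmetry of $\huaH_1$ places it there and that exactness in the subcomplex still gives $\dM_{H^\sharp}$-exactness of $\huaH_1^\sharp$ --- is exactly the point the paper leaves implicit.
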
}

\begin{thm}
Let $H$ be a Koszul--Vinberg structure on a left-symmetric algebroid $(\huaA,\ast_\huaA,a_\huaA)$. Let $H_{(n)}=\sum_{i=0}^{n}\huaH_it^i$ be an order $n$ deformation of $H$. Define\vspace{-.5ex}
 \begin{gather}\label{eq:3cocycle}
 \Theta=\half\sum_{\substack{i+j=n+1\\ i,j\geq1}}\Courant{\huaH_i,\huaH_j}_{\rm KV}.
 \end{gather}
 Then the $3$-cochain $\Theta$ is closed, i.e., $\delta_{\huaA^*}\Theta=0$. Furthermore, $H_{(n)}$ is {extendable} if and only if the cohomology class $[\Theta]$ in $\tilde{H}_{\rm KV}^3(\huaA^*)$ is trivial.
\end{thm}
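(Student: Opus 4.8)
The plan is to reduce everything to the relative Rota--Baxter picture via the isomorphism $\Psi$ and then invoke Theorem~\ref{thm:extendable of O operrator}; the only genuinely new ingredient is the bookkeeping of the symmetry constraint, which is precisely what the subcomplex $\tilde{C}_{\rm KV}^{*}(\huaA^*)$ encodes.

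First I would set $T=H^\sharp$ and $\huaT_i=\huaH_i^\sharp$. By Proposition~\ref{pro:LSBi-H}, $T$ is a relative Rota--Baxter operator on the \LP pair $(\huaA^c;L)$, and (by the remark preceding the statement) $T_{(n)}:=\sum_{i=0}^n\huaT_it^i=H_{(n)}^\sharp$ is an order $n$ deformation of $T$. Under $\Psi$ of \eqref{eq:relation-coboundary} a symmetric tensor $G\in\Sym^2(\huaA)$ is sent to the bundle map $G^\sharp$, and by the proof of Proposition~\ref{pro:isomorphism} the map $\Psi$ is an isomorphism of graded Lie algebras $(C_{\rm KV}^{*}(\huaA^*),\Courant{\cdot,\cdot}_{\rm KV})\to(\huaC^*(\huaA^*,\huaA),\Courant{\cdot,\cdot})$ intertwining $\delta_{\huaA^*}$ with $\dM_{H^\sharp}$. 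Hence $\Psi(\huaH_i)=\huaT_i$ and $\Psi(\Theta)=\half\sum_{i+j=n+1,\,i,j\ge1}\Courant{\huaT_i,\huaT_j}$, which is exactly the obstruction $2$-cochain attached by Theorem~\ref{thm:extendable of O operrator} to $T_{(n)}$. That theorem gives $\dM_{H^\sharp}\Psi(\Theta)=0$, and applying $\Upsilon=\Psi^{-1}$ with the intertwining relation yields $\delta_{\huaA^*}\Theta=\Upsilon\big(\dM_{H^\sharp}\Psi(\Theta)\big)=0$.

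The next and most delicate point is to show that $\Theta$ in fact lies in the subcomplex, i.e.\ that $\Theta(\alpha_1,\alpha_2,\alpha_3)+c.p.=0$ for all $\alpha_1,\alpha_2,\alpha_3\in\Gamma(\huaA^*)$; this is the analogue, for the Koszul--Vinberg case, of the $\CWM$-bilinearity check of $\Theta$ in the proof of Theorem~\ref{thm:extendable of O operrator}. By polarization, $\Courant{\huaH_i,\huaH_j}_{\rm KV}=\half\big(\Courant{\huaH_i+\huaH_j,\huaH_i+\huaH_j}_{\rm KV}-\Courant{\huaH_i,\huaH_i}_{\rm KV}-\Courant{\huaH_j,\huaH_j}_{\rm KV}\big)$, and every argument occurring here is a symmetric tensor; so it suffices to recall that for every $G\in\Sym^2(\huaA)$ one has $\Courant{G,G}_{\rm KV}(\alpha_1,\alpha_2,\alpha_3)=2[G,G](\alpha_1,\alpha_2,\alpha_3)$ with $[G,G]$ given by \eqref{brac2}, and a short direct computation with \eqref{brac2}, using the symmetry $\langle G^\sharp(\alpha_i),\alpha_j\rangle=\langle G^\sharp(\alpha_j),\alpha_i\rangle$ and $[x,y]_\huaA=x\ast_\huaA y-y\ast_\huaA x$, shows $[G,G](\alpha_1,\alpha_2,\alpha_3)+c.p.=0$. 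Hence $\Theta\in\tilde{C}_{\rm KV}^{3}(\huaA^*)$ and the class $[\Theta]\in\tilde{H}_{\rm KV}^{3}(\huaA^*)$ is well defined. This cyclic-identity verification is the main obstacle: unlike in Theorem~\ref{thm:extendable of O operrator}, where the obstruction simply sits in the full second cohomology, here one must ensure the obstruction genuinely lands in $\tilde{H}_{\rm KV}^{3}(\huaA^*)$ rather than in $H_{\rm KV}^{3}(\huaA^*)$.

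Finally, for extendability I would compare Maurer--Cartan coefficients directly in $(C_{\rm KV}^{*}(\huaA^*),\Courant{\cdot,\cdot}_{\rm KV})$, since the symmetry requirement on the correction term must be tracked. Writing $H_{(n+1)}=H_{(n)}+t^{n+1}\huaH_{n+1}$ with $\huaH_{n+1}\in\Sym^2(\huaA)$, the coefficients of $t^0,\dots,t^n$ in $\Courant{H_{(n+1)},H_{(n+1)}}_{\rm KV}$ agree with those for $H_{(n)}$ and therefore vanish, while the coefficient of $t^{n+1}$ equals $2\Courant{H,\huaH_{n+1}}_{\rm KV}+2\Theta=-2\delta_{\huaA^*}\huaH_{n+1}+2\Theta$ (using $\delta_{\huaA^*}=(-1)^{k-1}\Courant{H,\cdot}_{\rm KV}$ on $C_{\rm KV}^{k}(\huaA^*)$ with $k=2$ and $\huaH_{n+1}\in\Sym^2(\huaA)\subseteq C_{\rm KV}^{2}(\huaA^*)$). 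Thus $H_{(n)}$ is extendable if and only if there exists $\huaH_{n+1}\in\Sym^2(\huaA)=\tilde{C}_{\rm KV}^{2}(\huaA^*)$ with $\delta_{\huaA^*}\huaH_{n+1}=\Theta$, i.e.\ if and only if $\Theta$ is a coboundary inside the subcomplex $\tilde{C}_{\rm KV}^{*}(\huaA^*)$; combined with $\delta_{\huaA^*}\Theta=0$ and $\Theta\in\tilde{C}_{\rm KV}^{3}(\huaA^*)$ this says exactly that $[\Theta]=0$ in $\tilde{H}_{\rm KV}^{3}(\huaA^*)$. Everything outside the cyclic-identity check is formal transport along $\Psi$ and a routine coefficient comparison.
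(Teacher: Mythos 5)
Your proof is correct and follows the same overall strategy as the paper: transport the problem along the graded Lie algebra isomorphism $\Psi$ to the relative Rota--Baxter setting, invoke Theorem~\ref{thm:extendable of O operrator} (together with Proposition~\ref{pro:isomorphism}) for closedness, verify the cyclic identity so that $\Theta$ lands in the subcomplex $\tilde{\huaC}_{\rm KV}^3(\huaA^*)$, and then read off extendability from the controlling differential graded Lie algebra. The one place you genuinely diverge is the cyclic-identity check: the paper expands $\Courant{\huaH_i,\huaH_j}_{\rm KV}(\alpha_1,\alpha_2,\alpha_3)$ into its full ten-term bilinear formula and verifies the vanishing of the cyclic sum by direct inspection, whereas you polarize $\Courant{\huaH_i,\huaH_j}_{\rm KV}$ (legitimate, since the bracket of two degree-one elements is symmetric) to reduce to the diagonal identity $\Courant{G,G}_{\rm KV}=2[G,G]$ already established in the paper, and then check $[G,G](\alpha_1,\alpha_2,\alpha_3)+c.p.=0$ for a single symmetric $G$ using \eqref{brac2} --- a cleaner route to the same lemma. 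You also write out the $t^{n+1}$-coefficient comparison proving the extendability equivalence inside the subcomplex, a step the paper dismisses with ``we omit the details''; your version makes explicit why the obstruction must be killed by a \emph{symmetric} primitive, which is exactly the point of working in $\tilde{\huaC}_{\rm KV}^{*}(\huaA^*)$ rather than ${\huaC}_{\rm KV}^{*}(\huaA^*)$.
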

\begin{proof}
	For any $ \alpha_1,\alpha_2,\alpha_3\in\Gamma(\huaA^*)$ and $i,j\geq 1$, we have
\begin{align*}
\Courant{\huaH_i,\huaH_j}_{\rm KV}(\alpha_1,\alpha_2,\alpha_3)
&=a_\huaA(\huaH^\sharp_i(\alpha_1))\langle \huaH^\sharp_j(\alpha_2),\alpha_3\rangle+a_\huaA(\huaH^\sharp_j(\alpha_1))\langle \huaH^\sharp_i(\alpha_2),\alpha_3\rangle
\\
&\phantom{=}-a_\huaA(\huaH^\sharp_i(\alpha_2))\langle \huaH^\sharp_j(\alpha_1),\alpha_3\rangle
-a_\huaA(\huaH^\sharp_j(\alpha_2))\langle \huaH^\sharp_i(\alpha_1),\alpha_3\rangle
\\
&\phantom{=}+\langle \alpha_1,\huaH^\sharp_i(\alpha_2)\ast_\huaA \huaH^\sharp_j(\alpha_3)\rangle +\langle \alpha_1,\huaH^\sharp_j(\alpha_2)\ast_\huaA \huaH^\sharp_i(\alpha_3)\rangle \\
&\phantom{=}-\langle\alpha_2,\huaH^\sharp_i(\alpha_1)\ast_\huaA \huaH^\sharp_j(\alpha_3)\rangle-\langle\alpha_2,\huaH^\sharp_j(\alpha_1)\ast_\huaA \huaH^\sharp_i(\alpha_3)\rangle
\\
&\phantom{=}-\langle \alpha_3,[\huaH^\sharp_i(\alpha_1),
\huaH^\sharp_j(\alpha_2)]_\huaA\rangle
-\langle \alpha_3,[\huaH^\sharp_j(\alpha_1),
\huaH^\sharp_i(\alpha_2)]_\huaA\rangle.
\end{align*}
It~is straightforward to	check that\vspace{-.5ex}
\begin{gather*}
\Courant{\huaH_i,\huaH_j}_{\rm KV}(\alpha_1,\alpha_2,\alpha_3)+ \Courant{\huaH_i,\huaH_j}_{\rm KV}(\alpha_3,\alpha_1,\alpha_2)+\Courant{\huaH_i,\huaH_j}_{\rm KV}(\alpha_2,\alpha_3,\alpha_1)=0,
\end{gather*}
which implies that $\Theta$ defined by \eqref{eq:3cocycle} is in $\tilde{\huaC}_{\rm KV}^3(\huaA^*)$. By Theorem \ref{thm:extendable of O operrator} and Proposition \ref{pro:isomorphism}, the $3$-cochain $\Theta$ is closed. The rest follows directly from the fact that this deformation problem is controlled by the differential graded Lie algebra $(C_{\rm KV}^{*}(\huaA^*),\Courant{\cdot,\cdot}_{\rm KV},\Courant{H,\cdot}_{\rm KV})$. We~omit the details.\vspace{-1ex}
	\end{proof}	

\subsection*{Acknowledgements}

This research was supported by the National Key Research and Development Program of China (2021YFA1002000), the National Natural Science Foundation of China (11901501, 11922110), the China Postdoctoral Science Foundation (2021M700750) and the Fundamental Research Funds for the Central Universities (2412022QD033). We~give our warmest thanks to the referees for very useful comments that improve the paper.\vspace{-1ex}

\pdfbookmark[1]{References}{ref}
\LastPageEnding

\end{document}